\newcommand\shorter[1]{}
\newtheorem{theorem}{Theorem}
\newtheorem{lemma}[theorem]{Lemma}
\newtheorem{prop}[theorem]{Proposition}
\newtheorem{definition}[theorem]{Definition}
\numberwithin{equation}{section}
\numberwithin{theorem}{section}
\numberwithin{figure}{section}
\theoremstyle{remark}
\begin{document}

\title[Spike patterns]{Spike patterns in a reaction-diffusion-ode model with Turing instability}

\author{Steffen H\"arting}
\author{Anna Marciniak-Czochra}
\address[Steffen H\"arting and Anna Marciniak-Czochra]{
Institute of Applied Mathematics,  Interdisciplinary Center for Scientific Computing (IWR) and BIOQUANT, University of Heidelberg, 69120 Heidelberg, Germany}
\email{anna.marciniak@iwr.uni-heidelberg.de}
\urladdr {http://www.biostruct.uni-hd.de/}

\date{\today}

\begin{abstract}
We explore
a mechanism of pattern formation arising in processes described by a system of a single
reaction-diffusion equation coupled with ordinary differential equations.
Such
systems of equations arise from the modeling of interactions between cellular processes
and diffusing growth factors. We focused on the model of early carcinogenesis proposed
by Marciniak-Czochra and Kimmel, which is an example of a wider class of pattern formation models with an autocatalytic non-diffusing
component.  We present a numerical study showing emergence of periodic and irregular spike patterns due to diffusion-driven instability.
To control the accuracy of simulations, we develop a numerical code based on the finite
element method and adaptive mesh. 
Simulations, supplemented by numerical analysis, indicate a novel
pattern formation phenomenon based on the emergence of nonstationary structures tending
asymptotically to the sum of Dirac deltas.

{\bf Key words:} diffusion-driven instability, spike patterns, numerical simulations, reaction-diffusion equations, mass concentration.

\end{abstract}
\maketitle

\section{Introduction}

Classical mathematical models of biological or chemical pattern formation
have been developed using reaction-diffusion
equations, see eg. \cite{G-M,Epstein,MC12r,Murray,Epstein2} and references therein.  In that framework there exist essentially two mechanisms of formation of stable spatially heterogeneous structures,
\begin{itemize}
\item diffusion-driven instability (DDI) which leads to destabilization of a spatially homogeneous steady state and emergence of Turing patterns,
\item a mechanism based on the multistability and hysteresis in the kinetic system which allows for the formation of transition layer patterns far from equilibrium.
\end{itemize} 
Both mechanisms can also coexist yielding a complex dynamics of the system as, for example, in the Lengyel-Epstein model of chemical reactions \cite{Epstein,Epstein2}.

The Turing phenomenon is related to a local behavior of solutions of a reaction-diffusion system in the neighborhood of  a constant solution that is destabilized via diffusion. Patterns arising through a bifurcation can be spatially monotone or spatially periodic. The mechanism responsible for such behavior of
model solutions is called a diffusion-driven instability (Turing-type instability), which can be formulated in the following way.
  \begin{definition}[Diffusion-driven instability (DDI)]
  A system of reaction-diffusion equations exhibits DDI (Turing instability)  if and only if  there exists a constant stationary solution which is stable to spatially homogeneous perturbations, but unstable to spatially heterogeneous perturbations.
  \end{definition} 
The original idea was presented by Turing on the example of two linear reaction-diffusion equations \cite{Turing}. Due to the local character of Turing instability, the notion has been  extended in a natural way to the nonlinear equations using linearization around a constant positive steady state. However, nonlinear systems  may have multiple constant steady states yielding existence of heterogeneous structures far from the equilibrium. In such cases, global behavior of the solutions cannot be predicted by the properties of the linearized system and a variety of possible dynamics depending on the type of nonlinearities can be observed. On the other hand, Turing instability can be exhibited also in degenerated systems such as reaction-diffusion-odes models or integro-differential equations, for example shadow systems obtained through reduction of the reaction-diffusion model, \cite{Keener}, \cite{Nishiura82}.
 Following all these observations and the character of Turing's original system, we define Turing patterns in the following way.
\begin{definition}
By Turing patterns we call the solutions of reaction-diffusion equations that are
\begin{itemize}
\item stable,
\item stationary,
\item continuous,
\item spatially heterogeneous and
\item arise due to the Turing instability (DDI) of a constant steady state.  
\end{itemize}
\end{definition}

Recently, it has been shown that if DDI property is exhibited  by a system of a  single reaction-diffusion equation coupled to an ordinary differential equation with autocatalysis of non-diffusing component. Then, it does not lead to Turing patterns, namely all continuous patterns are unstable \cite{MCKS13}. As a consequence the question for the long-term behavior of solutions arises.
It has been previously shown that a diffusion-driven blow-up in systems of reaction-diffusion equations can occur in finite time, \cite{Yanagida}. Even more, blow-up in finite time in $L^{\infty}$, but global existence of weaker solutions has been shown, leading to so called "incomplete blow-up", see e.g. \cite{Pierre} for uniform boundedness in $L^1$. 

In the current paper we present a phenomenon of diffusion-driven unbounded growth and formation of dynamic spike pattern converging asymptotically to a sum of Dirac deltas. For a reaction-diffusion-ode model arising from applications in biology, we show that introducing diffusion in the ODEs system not only destabilizes the constant steady state, but also leads to an unbounded growth of model solutions. Since the solutions of the system with zero diffusion are uniformly bounded, we call the observed phenomenon the {\it diffusion-driven unbounded growth}.  The total mass ($L^1$ norm) of the solutions is uniformly bounded but it concentrates in isolated points for time tending to infinity. Using numerical simulations, we investigate how the shape of emerging patterns depends on initial conditions and the scaling coefficient (size of diffusion versus domain size). Interestingly, we find out that the shape of observed patterns are superposition of a near-equilibrium effect of diffusion-driven instability and a far-from-equilibrium effect of multistability exhibited by the model. 
\section{Problem formulation}
We study a reaction-diffusion-odes model of the diffusion-regulated growth of cell population, which has the form
of two ordinary-partial differential equations 
\begin{align}
u_t&=\Big(a_1\frac{uw}{1+uw} -d_1\Big) u &\text{for}\ x\in [0,1], \; t>0, \label{eq1-qs}\\
w_t &= D_w w_{xx} - w - u^2 w +\kappa_1  &\text{for}\ x\in (0,1), \; t>0, \label{eq2-qs}
\end{align}
supplemented with homogeneous Neumann (zero flux) boundary conditions for the function $w=w(x,t)$
\begin{equation}\label{N1}
w_x(0,t)=w_x(1,t)=0 \quad \mbox{for all} \quad t>0,
\end{equation}
and with nonnegative initial conditions
\begin{equation}
u(x,0)=u_0(x), \quad w(x,0)=w_0(x).\label{ini-qs}
\end{equation}
$a_1, d_1, D_w, \kappa_1$  denote positive constants.\\

In this paper we focus on one-dimensional domain $[0,1]$ for a clarity of presentation. The results can be obtained also for a model defined on two-dimensional space domain. Obviously, in such case a structure of spatial patterns is richer. Nevertheless, the main aspect of the pattern formation phenomenon exhibited by this model, i.e. evolution of spike patterns of mass concentration, is preserved independent on the dimension of the spatial domain. 
Model \eqref{eq1-qs}-\eqref{ini-qs} is a rescaled reduction of the model
\begin{align}
u_t&=\Big(a\frac{v}{u+v} -d_{c}\Big) u,                                  & \text{for}\ x\in [0,1], \; t>0, \label{eq1}\\
v_t &=-d_b v +\alpha u^2 w -d v,                                         & \text{for}\ x\in [0,1], \; t>0, \label{eq2}\\
w_t &= \frac{1}{\gamma} w_{xx} -d_g w -\alpha u^2 w +d v +\kappa,  \quad &\text{for}\ x\in (0,1), \; t>0 \label{eq3},
\end{align}
supplemented with homogeneous Neumann (zero flux) boundary conditions for the function $w=w(x,t)$.\\
Model \eqref{eq1}-\eqref{eq3} was proposed in \cite{MCK06} as a receptor-based model of spatially distributed growth of a clonal population of pre-cancerous cells and its extensions and modifications were studied in \cite{MCK07,MCK08}. 
The reduction was proposed in \cite{H2011}, but without further numerical or analytical investigation.\\
In case of the spatial domain being the unit square, approximation of solutions of model \eqref{eq1}-\eqref{eq3} have been performed in \cite{H2011}.
Numerical simulations of the models showed qualitatively new patterns of behavior of solutions, including, in some cases, a strong dependence of the emerging pattern on initial conditions and quasi-stability followed by rapid growth of solutions.  However, recently it has been shown using linear stability analysis of nonconstant steady states that all stationary solutions of this model, both continuous and discontinuous, are unstable \cite{MCKS12}. A question arises if the model exhibits a formation of any pattern, which persist for long times. Our present research is focused on understating these phenomena and answering questions on pattern formation in such class of models.
\section{Analytical results}
In the remainder of this paper, we consider the system \eqref{eq1-qs}-\eqref{ini-qs}. It has been obtained using a quasi-stationary approximation assuming that the dynamics of  $v$ variable is faster than the dynamics of other variables. In the present paper, we focus on the reduced model, since it is the simplest reaction-diffusion-ode model exhibiting the spike pattern formation mechanism. A rigorous link between the solutions of the original model \eqref{eq1}-\eqref{eq3} and its two-equations approximation has been recently shown in \cite{MCKS13}.
\subsection{Existence of solutions}
Existence of global, classical solutions can be proven within the framework of ordinary differential equations and the theory of linear semigroups, see e.g. \cite{R84,Smoller}. Moreover, it can be shown using maximum principle that the solutions remain positive for positive initial conditions.\\
\subsection{Existence of steady states}
The analytical results concerning existence of regular stationary patterns of \eqref{eq1-qs}-\eqref{eq2-qs} can be summarized in the following theorem:\\
\begin{theorem}\label{thm:C2}
Under assumptions $a_1>d_1$ and $\kappa_1>2\frac{d_1}{a_1-d_1}$, system \eqref{eq1-qs}-\eqref{eq2-qs} has the following smooth stationary solutions
\begin{itemize}
\item constant steady states $(\overline{u}_0,\overline{w}_0)=(0,\kappa_1), (\overline{u}_+,\overline{w}_+)=(\frac{d_1}{a_1-d_1} \frac{1}{\overline{w}_+}, \frac{\kappa_1}{2} + \sqrt{(\frac{\kappa_1}{2})^2-(\frac{d_1}{a_1-d_1})^2})$ and 
$(\overline{u}_-,\overline{w}_-)=(\frac{d_1}{a_1-d_1} \frac{1}{\overline{w}_-}, \frac{\kappa_1}{2} - \sqrt{(\frac{\kappa_1}{2})^2-(\frac{d_1}{a_1-d_1})^2})$  being stationary solutions of the kinetic system. 
\item a unique strictly increasing solution $W$ and a unique strictly decreasing solution $W$; $U$ is defined by $U=\frac{d_1}{a_1-d_1} \frac{1}{W}$.
\item a periodic solution $W$ with $n$ modes, increasing on intervals $[0, \frac{1}{n}]$ and its symmetric counterpart $\widetilde W(x)\equiv W_n(1-x)$, where $n \in \mathbb{N}$ depends on the diffusion coefficient; and the periodic function  $W\in C([0,1])$ is defined in the following
\end{itemize}
$$
W(x) =
\left\{
\begin{array}{ccc}
W\left(x-\frac{2j}{n}\right)& \; \text{dla}\;  & x \in \left[\frac{2j}{n}, \frac{2j+1}{n}\right]\vspace{0.3cm}\\
W\left(\frac{2j+2}{n}-x \right)& \; \text{dla}\;  & x \in \left[\frac{2j+1}{n}, \frac{2j+2}{n}\right]
\end{array}
\right.
$$
for every $j\in \{0,1,2,3,...\}$ such that $2j+2\leq n$. $\widetilde{U}$ is defined by $U=\frac{d_1}{a_1-d_1} \frac{1}{W}$.\\
\end{theorem}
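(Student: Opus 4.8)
The plan is to reduce the stationary system to a single scalar second‑order boundary value problem and then to analyse it by phase‑plane and time‑map arguments. First I would look for stationary solutions $(U,W)$: pointwise one has $\big(a_1 UW/(1+UW)-d_1\big)U=0$, so either $U\equiv0$ — in which case $W$ solves $D_wW_{xx}=W-\kappa_1$ with Neumann data and hence $W\equiv\kappa_1$, giving $(\overline{u}_0,\overline{w}_0)=(0,\kappa_1)$ — or, wherever $U>0$, $UW=\theta:=d_1/(a_1-d_1)>0$ (here $a_1>d_1$ is used); since $W>0$ this is incompatible with $U=0$, so in fact $U=\theta/W$ on all of $[0,1]$. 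Substituting $U^2W=\theta^2/W$ collapses the problem to
\begin{equation}\label{eq:steady-ode}
D_wW_{xx}=f(W):=W+\frac{\theta^2}{W}-\kappa_1,\qquad W_x(0)=W_x(1)=0,\qquad W>0 .
\end{equation}
Its constant solutions are the zeros of $f$, i.e. the roots of $W^2-\kappa_1W+\theta^2=0$, which under $\kappa_1>2\theta$ are the two positive numbers $\overline{w}_\pm=\kappa_1/2\pm\sqrt{(\kappa_1/2)^2-\theta^2}$; together with $U=\theta/W$ this recovers the constant steady states in the statement. (Smoothness of every solution then follows by bootstrapping in \eqref{eq:steady-ode}, once one knows $W$ stays in a compact subset of $(0,\infty)$.)

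Next I would do the phase‑plane analysis of \eqref{eq:steady-ode}. The equation is conservative, with first integral $E=\tfrac{D_w}{2}W_x^2+V(W)$, where $V(W)=-\tfrac{W^2}{2}-\theta^2\ln W+\kappa_1 W$ satisfies $V'=-f$ and $V''(W)=\theta^2/W^2-1$. Since $\overline{w}_-<\theta<\overline{w}_+$ (their product is $\theta^2$), $V$ has a strict local minimum at $\overline{w}_-$ and a strict local maximum at $\overline{w}_+$, and $V(W)\to+\infty$ as $W\to0^+$; hence $(\overline{w}_-,0)$ is a centre and $(\overline{w}_+,0)$ a saddle in the $(W,W_x)$ plane. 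A solution of \eqref{eq:steady-ode} on $[0,1]$ must begin and end on the $W$‑axis (the Neumann conditions), so a non‑constant one must lie on a closed orbit around $(\overline{w}_-,0)$ — there is one for each energy $E\in\big(V(\overline{w}_-),V(\overline{w}_+)\big)$, with turning points $w_1(E)<\overline{w}_-<w_2(E)<\overline{w}_+$ — and is built by gluing half‑oscillations of that orbit and reflecting across the turning points, which automatically yields a $C^\infty$ solution satisfying the boundary conditions. A monotone piece on an interval of length $\ell$ corresponds to an orbit whose half‑period
\[
T(E)=\int_{w_1(E)}^{w_2(E)}\sqrt{\frac{D_w}{2(E-V(s))}}\,ds
\]
equals $\ell$.

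For existence I would use that $E\mapsto T(E)$ is continuous, with $T(E)\to T_0:=\pi\sqrt{D_w/(-f'(\overline{w}_-))}$ as $E\downarrow V(\overline{w}_-)$ (the half‑period of the linearised oscillator at the centre) and $T(E)\to+\infty$ as $E\uparrow V(\overline{w}_+)$ (a logarithmic divergence as the orbit approaches the saddle). By the intermediate value theorem, for every $n$ with $1/n>T_0$ there is an energy $E_n$ with $T(E_n)=1/n$; the admissible $n$ form the set $\{1,\dots,\lfloor 1/T_0\rfloor\}$, which is nonempty once $D_w$ is small enough and grows as $D_w$ decreases — this is the announced dependence of $n$ on the diffusion coefficient. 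Taking the increasing half‑oscillation at energy $E_n$ on $[0,1/n]$ and extending it by the reflections written in the statement produces the positive $C^\infty$ $n$‑mode profile $W$ (with $U=\theta/W$); the case $n=1$ is the strictly increasing solution on $[0,1]$, and $\widetilde{W}(x):=W(1-x)$ the strictly decreasing one.

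The hard part will be uniqueness: showing that for each admissible $n$ the equation $T(E)=1/n$ has only one root, equivalently that the time‑map $E\mapsto T(E)$ is strictly monotone on $\big(V(\overline{w}_-),V(\overline{w}_+)\big)$. My plan here is the classical one — split the integral defining $T$ over the two monotone branches of $V$ about $\overline{w}_-$, change variables so that $E$ enters only through a weight of the form $(E-\cdot)^{-1/2}$, differentiate under the integral sign, and integrate by parts to reduce the sign of $T'(E)$ to a pointwise inequality in $V,V',V''$; with the explicit potential above, whose convexity profile $V''(W)=\theta^2/W^2-1$ is monotone decreasing, this inequality should hold (it is a period‑monotonicity criterion of Chicone/Schaaf type). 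Strict monotonicity of $T$ then gives the asserted uniqueness of the monotone solutions and of the $n$‑mode solution for each $n\le 1/T_0$, completing the proof. I expect verifying this monotonicity, rather than the reduction or the existence step, to be where the real work lies.
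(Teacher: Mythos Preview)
Your reduction to the scalar Neumann problem $D_wW_{xx}=W+\theta^2/W-\kappa_1$, the conservative phase-plane picture (centre at $\overline{w}_-$, saddle at $\overline{w}_+$), and the time-map existence argument are all correct; this is exactly how one would prove the result from scratch. The paper, however, takes a genuinely different and much shorter route: it does \emph{not} analyse the scalar problem at all. Instead it observes that the quasi-steady relation $v=\frac{\alpha}{d_b+d}u^2w$, together with the rescaling that produced \eqref{eq1-qs}--\eqref{eq2-qs}, sets up a bijection between the smooth steady states of the two-equation system and those of the original three-equation model \eqref{eq1}--\eqref{eq3}; it then simply invokes Theorem~2.6 of \cite{MCKS12}, where the classification (monotone and $n$-mode solutions) was already carried out for that model. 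The only explicit work in the paper's proof is the algebra for the constant equilibria, which matches your first paragraph.

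So you are essentially re-deriving, for the reduced system, the content of the external reference. Your approach buys self-containment and quantitative information---for instance your threshold $T_0=\pi\sqrt{D_w/(-f'(\overline{w}_-))}$ makes the dependence of the admissible $n$ on $D_w$ explicit, whereas the theorem statement leaves it vague---while the paper's approach buys brevity at the cost of a citation. Two remarks on the part you flag as hard: first, the theorem as stated claims uniqueness only for the strictly monotone solutions, not for the $n$-mode periodic ones, so you need less than full monotonicity of $T$; second, for this particular nonlinearity $f(W)=W+\theta^2/W-\kappa_1$ the Schaaf-type quantity $5(f'')^2-3f'f'''$ computes to $\tfrac{2\theta^2}{W^4}\bigl(\tfrac{\theta^2}{W^2}+9\bigr)>0$, so the standard period-monotonicity criteria do apply and your plan goes through.
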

The proof of this statement is deferred to the Appendix.
\subsection{Stability of steady states}
We investigate stability of the solutions described in Theorem \ref{thm:C2}, item (i):\\
The operator resulting from linearization of \eqref{eq1-qs}-\eqref{eq2-qs} around $(\frac{d_1}{a_1-d_1} \frac{1}{w}, w)$ reads in the matrix-form:
\begin{equation}
J:=
\begin{bmatrix}
a_{11} & a_{12} \\
a_{21} & a_{22}+D_w \Delta 
\end{bmatrix}
=\begin{bmatrix}
\frac{a_1-d_1}{a_1}d_1 & \frac{d_1^2}{a_1 w^2} \\
-2\frac{d1}{a_1-d_1}   & -\big(1+\big(\frac{d_1}{(a_1-d_1)w}\big)^2\big) + D_w \Delta
\end{bmatrix}
\end{equation}
Assume that a solution of $\frac{d}{dt} \phi=J\phi$ with homogeneous Neumann boundary conditions is of the form $\phi=\phi_k$, where $\phi_k$ denotes the eigenvector of the Laplace operator associated to the $k$th eigenvalue. Then, the {\it dispersion relation}, i.e. the dependence of eigenvalues of the  problem  linearized at a constant steady states with the eigenvalues of the  Laplace operator, see Fig. \ref{fig:DR-0}, is defined by
\begin{equation}
\text{disp}(\lambda,k)=\operatorname{det}\left( \begin{bmatrix}
a_{11}-\lambda & a_{12} \\
a_{21} & a_{22}-D_w (\pi k)^2 - \lambda
\end{bmatrix}\right),
\end{equation}
where $(\pi k)^2$ is the $k$-th eigenvalue of the Laplace operator considered on $C^2(0,1)$.
Therefore, $\lambda$ is an element of the point spectrum of $J$ if $\text{disp}(\lambda,k)=0$ for $\lambda \neq a_{11},0$. 
\begin{prop}\label{thm:DDI}
Under assumptions $a_1>d_1$ and $\kappa_1>2 \frac{d_1}{a_1-d_1}$, the following holds
\begin{itemize}
 \item $(\overline{u}_0,\overline{w}_0)$ is a stable stationary solution of \eqref{eq1-qs}-\eqref{eq2-qs} and its kinetic system.
 \item $(\overline{u}_+,\overline{w}_+)$ is an unstable stationary solution of \eqref{eq1-qs}-\eqref{eq2-qs} and its kinetic system.
 \item $(\overline{u}_-,\overline{w}_-)$ is an unstable stationary solution of \eqref{eq1-qs}-\eqref{eq2-qs}.
 \item $(\overline{u}_-,\overline{w}_-)$ is a stable stationary solution of the kinetic system of \eqref{eq1-qs}-\eqref{eq2-qs} if and only if at least one of the following conditions is satisfied: \\
 1) $\kappa_1^2>2 \frac{d_1^3}{a_1(a_1-d_1)}$ \\
 2) $a_1>\frac{d_1^2}{d_1-1}$ and $\kappa_1^2 > \frac{d_1^4}{a_1} \frac{1}{a_1-d_1(a_1-d_1)}$.
\end{itemize}
Additionally, there exist infinitely many positive eigenvalues of the operator resulting from a linearization of \eqref{eq1-qs}-\eqref{eq2-qs} at $(\overline{u}_-,\overline{w}_-)$ and $\frac{a_1-d_1}{a_1}d_1$ and $-\infty$ are their only limit points.
\end{prop}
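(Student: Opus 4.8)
The plan is to reduce the whole statement to $2\times 2$ linear algebra by decomposing along the eigenfunctions $\phi_k$ of the Neumann Laplacian: on the span of $\phi_k$ the linearized flow $\tfrac{d}{dt}\phi=J\phi$ is governed by the constant matrix
\[
J_k=\begin{bmatrix} a_{11}&a_{12}\\ a_{21}& a_{22}-D_w(\pi k)^2\end{bmatrix},\qquad k=0,1,2,\dots,
\]
so the point spectrum of $J$ is the union over $k$ of the two roots of $\text{disp}(\lambda,k)=0$, and the kinetic system is exactly the mode $k=0$. A real $2\times2$ matrix has both eigenvalues in the open left half-plane iff its trace is negative and its determinant positive, so everything reduces to the signs of $\operatorname{tr}J_k=a_{11}+a_{22}-D_w(\pi k)^2$ and $\det J_k=a_{11}\bigl(a_{22}-D_w(\pi k)^2\bigr)-a_{12}a_{21}$. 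Two bookkeeping facts are used throughout: the steady-state identity $uw=\tfrac{d_1}{a_1-d_1}$, and the relations $\overline w_+\overline w_-=\bigl(\tfrac{d_1}{a_1-d_1}\bigr)^2$, $\overline w_++\overline w_-=\kappa_1$, which the positive roots inherit from the quadratic $w^2-\kappa_1 w+\bigl(\tfrac{d_1}{a_1-d_1}\bigr)^2=0$ that defines them, and which give in particular $\overline w_-<\tfrac{d_1}{a_1-d_1}<\overline w_+$. The state $(\overline u_0,\overline w_0)=(0,\kappa_1)$ is immediate: there $J_k$ is triangular with diagonal entries $-d_1<0$ and $-1-D_w(\pi k)^2<0$, so it is stable for both the full and the kinetic system.

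For the positive constant states, a direct computation using $uw=\tfrac{d_1}{a_1-d_1}$ gives
\[
\det J_0=\frac{d_1}{a_1(a_1-d_1)\,w^{2}}\bigl(d_1^{2}-(a_1-d_1)^{2}w^{2}\bigr),
\]
which by the ordering above is strictly negative at $w=\overline w_+$ and strictly positive at $w=\overline w_-$. A matrix with negative determinant has a positive real eigenvalue, so $(\overline u_+,\overline w_+)$ is unstable already in the mode $k=0$, hence for the kinetic and for the full system. For $(\overline u_-,\overline w_-)$ in the full system, $a_{11}=\tfrac{a_1-d_1}{a_1}d_1>0$ forces $\det J_k\to-\infty$ as $k\to\infty$; hence $\det J_k<0$ for all large $k$, each such mode carries one positive real eigenvalue $\lambda_+(k)$, and this gives at once the instability of $(\overline u_-,\overline w_-)$ and the existence of infinitely many positive eigenvalues. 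Solving $\text{disp}(\lambda,k)=0$ one finds $\lambda_-(k)\sim -D_w(\pi k)^2\to-\infty$ and $\lambda_+(k)=\det J_k/\lambda_-(k)\to a_{11}=\tfrac{a_1-d_1}{a_1}d_1$; since only finitely many modes carry further (possibly complex) eigenvalues, $\tfrac{a_1-d_1}{a_1}d_1$ and $-\infty$ are the only accumulation points of the spectrum.

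It remains to decide kinetic stability of $(\overline u_-,\overline w_-)$. As $\det J_0>0$ there, stability is equivalent to $\operatorname{tr}J_0=a_{11}+a_{22}<0$, and using $\bigl(\tfrac{d_1}{(a_1-d_1)\overline w_-}\bigr)^2=\tfrac{\overline w_+\overline w_-}{\overline w_-^{2}}=\tfrac{\overline w_+}{\overline w_-}$ this reads $\tfrac{\overline w_+}{\overline w_-}>P:=a_{11}-1=\tfrac{(a_1-d_1)d_1}{a_1}-1$. Putting $m:=\tfrac{d_1}{a_1-d_1}$ and eliminating $\overline w_\pm$ from the two relations, $s:=\overline w_+/\overline w_->1$ is the larger of the two mutually reciprocal roots of $m^2s^2-(\kappa_1^2-2m^2)s+m^2=0$, from which one reads that $s>P$ fails precisely when $P\ge1$ and $m^2(P+1)^2\ge\kappa_1^2P$; hence $(\overline u_-,\overline w_-)$ is stable for the kinetics iff $P<1$, or $P\ge1$ and $\kappa_1^2>m^2(P+1)^2/P$. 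Since $P+1=\tfrac{(a_1-d_1)d_1}{a_1}$, one has $2m^2(P+1)=2\tfrac{d_1^{3}}{a_1(a_1-d_1)}$ (the threshold of condition (1)) and $m^2(P+1)^2/P$ is the threshold of condition (2), with $P>0$ the hypothesis $a_1>\tfrac{d_1^{2}}{d_1-1}$ of (2); moreover, if $P\le1$ then $\kappa_1>2m$ already forces $\kappa_1^2>4m^2\ge2m^2(P+1)$, i.e. (1), while if $P>1$ then $m^2(P+1)^2/P<2m^2(P+1)$, so (1) implies the $\kappa_1$-bound of (2) and (2) covers exactly the remaining stable range. Assembling the cases gives ``stable $\iff$ (1) or (2)''.

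The mode decomposition, the $2\times2$ eigenvalue formulae and the large-$k$ asymptotics are routine; the only genuinely delicate step is the final case analysis for the kinetics at $(\overline u_-,\overline w_-)$, where one must keep careful track of which of the inequalities $P\lessgtr0$, $P\lessgtr1$, $\kappa_1^2\lessgtr2m^2(P+1)$, $\kappa_1^2\lessgtr m^2(P+1)^2/P$ are automatically enforced by the standing hypotheses, so that the disjunction (1) or (2) comes out as an exact characterization and not merely a sufficient one.
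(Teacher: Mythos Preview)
Your overall strategy coincides with the paper's: linearize, decompose along the Neumann eigenfunctions $\phi_k$, and reduce everything to the trace and determinant of the $2\times2$ matrices $J_k$; the paper packages the large-$k$ behaviour of the eigenvalues into a separate block-matrix lemma (their Lemma cited from \cite{Baker}), while you re-derive the asymptotics $\lambda_+(k)\to a_{11}$, $\lambda_-(k)\to-\infty$ directly from the quadratic formula. Your intermediate characterization for the kinetics at $(\overline u_-,\overline w_-)$, namely ``stable $\Leftrightarrow s>P$ $\Leftrightarrow$ $P<1$ or $\kappa_1^2>m^2(P+1)^2/P$'', is correct, and the passage through the reciprocal quadratic for $s=\overline w_+/\overline w_-$ is cleaner than the paper's direct substitution-and-squaring.

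There is, however, a sign slip in your final identification with condition~(2). Since $P=\tfrac{(a_1-d_1)d_1-a_1}{a_1}=-\tfrac{a_1-d_1(a_1-d_1)}{a_1}$ and $m^2(P+1)^2=d_1^4/a_1^2$, one finds
\[
\frac{m^2(P+1)^2}{P}=-\,\frac{d_1^4}{a_1\bigl(a_1-d_1(a_1-d_1)\bigr)},
\]
the \emph{negative} of the threshold printed in~(2). Under the first hypothesis of~(2) (which, for $d_1>1$, is exactly $P>0$) the quantity $a_1-d_1(a_1-d_1)$ is negative, so the printed threshold in~(2) is negative and the inequality $\kappa_1^2>(\text{negative})$ is vacuous; by contrast your $m^2(P+1)^2/P$ is then positive and is a genuine constraint. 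The paper's own derivation carries a parallel typo in the line labelled~(\ref{x2}), so the discrepancy originates in the statement of~(2) rather than in your reasoning; your derived condition is the correct one, but you should not claim it equals~(2) as printed. A minor additional point: your identification ``$P>0\Leftrightarrow a_1>\tfrac{d_1^2}{d_1-1}$'' tacitly assumes $d_1>1$; for $d_1\le 1$ one has $P\le 0$ always, though this does not affect the final equivalence since~(1) then holds automatically under the standing hypothesis $\kappa_1>2m$.
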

The proof of this proposition can be found in the Appendix.\\
A solution with initial conditions close to $(\overline{u}_0,\overline{w}_0)$ is shown in Figure \ref{fig:trivstab}.\\
\begin{minipage}[t]{40em}
\begin{center}
\begin{tabular}{p{15em}p{15em}}
   \centering
  \makeatletter{}\begingroup
  \makeatletter
  \providecommand\color[2][]{    \GenericError{(gnuplot) \space\space\space\@spaces}{      Package color not loaded in conjunction with
      terminal option `colourtext'    }{See the gnuplot documentation for explanation.    }{Either use 'blacktext' in gnuplot or load the package
      color.sty in LaTeX.}    \renewcommand\color[2][]{}  }  \providecommand\includegraphics[2][]{    \GenericError{(gnuplot) \space\space\space\@spaces}{      Package graphicx or graphics not loaded    }{See the gnuplot documentation for explanation.    }{The gnuplot epslatex terminal needs graphicx.sty or graphics.sty.}    \renewcommand\includegraphics[2][]{}  }  \providecommand\rotatebox[2]{#2}  \@ifundefined{ifGPcolor}{    \newif\ifGPcolor
    \GPcolorfalse
  }{}  \@ifundefined{ifGPblacktext}{    \newif\ifGPblacktext
    \GPblacktexttrue
  }{}    \let\gplgaddtomacro\g@addto@macro
    \gdef\gplbacktext{}  \gdef\gplfronttext{}  \makeatother
  \ifGPblacktext
        \def\colorrgb#1{}    \def\colorgray#1{}  \else
        \ifGPcolor
      \def\colorrgb#1{\color[rgb]{#1}}      \def\colorgray#1{\color[gray]{#1}}      \expandafter\def\csname LTw\endcsname{\color{white}}      \expandafter\def\csname LTb\endcsname{\color{black}}      \expandafter\def\csname LTa\endcsname{\color{black}}      \expandafter\def\csname LT0\endcsname{\color[rgb]{1,0,0}}      \expandafter\def\csname LT1\endcsname{\color[rgb]{0,1,0}}      \expandafter\def\csname LT2\endcsname{\color[rgb]{0,0,1}}      \expandafter\def\csname LT3\endcsname{\color[rgb]{1,0,1}}      \expandafter\def\csname LT4\endcsname{\color[rgb]{0,1,1}}      \expandafter\def\csname LT5\endcsname{\color[rgb]{1,1,0}}      \expandafter\def\csname LT6\endcsname{\color[rgb]{0,0,0}}      \expandafter\def\csname LT7\endcsname{\color[rgb]{1,0.3,0}}      \expandafter\def\csname LT8\endcsname{\color[rgb]{0.5,0.5,0.5}}    \else
            \def\colorrgb#1{\color{black}}      \def\colorgray#1{\color[gray]{#1}}      \expandafter\def\csname LTw\endcsname{\color{white}}      \expandafter\def\csname LTb\endcsname{\color{black}}      \expandafter\def\csname LTa\endcsname{\color{black}}      \expandafter\def\csname LT0\endcsname{\color{black}}      \expandafter\def\csname LT1\endcsname{\color{black}}      \expandafter\def\csname LT2\endcsname{\color{black}}      \expandafter\def\csname LT3\endcsname{\color{black}}      \expandafter\def\csname LT4\endcsname{\color{black}}      \expandafter\def\csname LT5\endcsname{\color{black}}      \expandafter\def\csname LT6\endcsname{\color{black}}      \expandafter\def\csname LT7\endcsname{\color{black}}      \expandafter\def\csname LT8\endcsname{\color{black}}    \fi
  \fi
  \setlength{\unitlength}{0.0500bp}  \begin{picture}(3600.00,3528.00)    \gplgaddtomacro\gplfronttext{      \csname LTb\endcsname      \put(726,704){\makebox(0,0)[r]{\strut{}-0.5}}      \put(726,920){\makebox(0,0)[r]{\strut{}-0.4}}      \put(726,1137){\makebox(0,0)[r]{\strut{}-0.3}}      \put(726,1353){\makebox(0,0)[r]{\strut{}-0.2}}      \put(726,1569){\makebox(0,0)[r]{\strut{}-0.1}}      \put(726,1786){\makebox(0,0)[r]{\strut{} 0}}      \put(726,2002){\makebox(0,0)[r]{\strut{} 0.1}}      \put(726,2218){\makebox(0,0)[r]{\strut{} 0.2}}      \put(726,2434){\makebox(0,0)[r]{\strut{} 0.3}}      \put(726,2651){\makebox(0,0)[r]{\strut{} 0.4}}      \put(726,2867){\makebox(0,0)[r]{\strut{} 0.5}}      \put(858,484){\makebox(0,0){\strut{} 0}}      \put(1327,484){\makebox(0,0){\strut{} 1}}      \put(1796,484){\makebox(0,0){\strut{} 2}}      \put(2265,484){\makebox(0,0){\strut{} 3}}      \put(2734,484){\makebox(0,0){\strut{} 4}}      \put(3203,484){\makebox(0,0){\strut{} 5}}      \put(2030,154){\makebox(0,0){\strut{}$k$}}      \put(2030,2607){\makebox(0,0){\strut{}$\lambda_+$}}    }    \gplgaddtomacro\gplfronttext{    }    \gplbacktext
    \put(0,0){\includegraphics{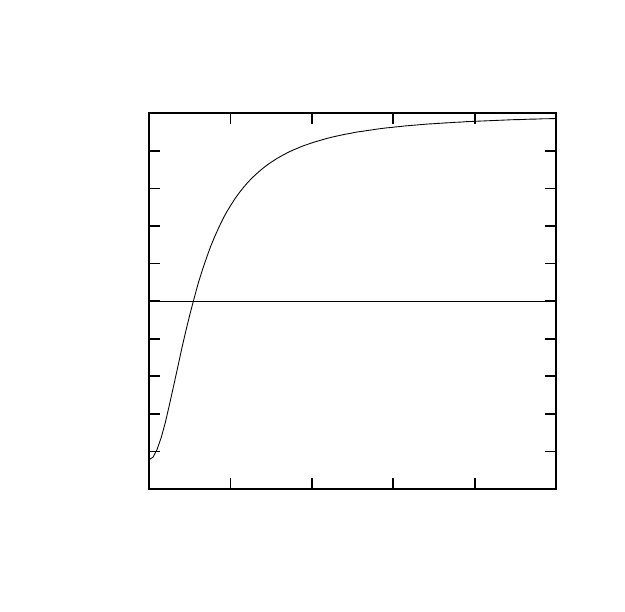}}    \gplfronttext
  \end{picture}\endgroup
 
&
   \centering
    \makeatletter{}\begingroup
  \makeatletter
  \providecommand\color[2][]{    \GenericError{(gnuplot) \space\space\space\@spaces}{      Package color not loaded in conjunction with
      terminal option `colourtext'    }{See the gnuplot documentation for explanation.    }{Either use 'blacktext' in gnuplot or load the package
      color.sty in LaTeX.}    \renewcommand\color[2][]{}  }  \providecommand\includegraphics[2][]{    \GenericError{(gnuplot) \space\space\space\@spaces}{      Package graphicx or graphics not loaded    }{See the gnuplot documentation for explanation.    }{The gnuplot epslatex terminal needs graphicx.sty or graphics.sty.}    \renewcommand\includegraphics[2][]{}  }  \providecommand\rotatebox[2]{#2}  \@ifundefined{ifGPcolor}{    \newif\ifGPcolor
    \GPcolorfalse
  }{}  \@ifundefined{ifGPblacktext}{    \newif\ifGPblacktext
    \GPblacktexttrue
  }{}    \let\gplgaddtomacro\g@addto@macro
    \gdef\gplbacktext{}  \gdef\gplfronttext{}  \makeatother
  \ifGPblacktext
        \def\colorrgb#1{}    \def\colorgray#1{}  \else
        \ifGPcolor
      \def\colorrgb#1{\color[rgb]{#1}}      \def\colorgray#1{\color[gray]{#1}}      \expandafter\def\csname LTw\endcsname{\color{white}}      \expandafter\def\csname LTb\endcsname{\color{black}}      \expandafter\def\csname LTa\endcsname{\color{black}}      \expandafter\def\csname LT0\endcsname{\color[rgb]{1,0,0}}      \expandafter\def\csname LT1\endcsname{\color[rgb]{0,1,0}}      \expandafter\def\csname LT2\endcsname{\color[rgb]{0,0,1}}      \expandafter\def\csname LT3\endcsname{\color[rgb]{1,0,1}}      \expandafter\def\csname LT4\endcsname{\color[rgb]{0,1,1}}      \expandafter\def\csname LT5\endcsname{\color[rgb]{1,1,0}}      \expandafter\def\csname LT6\endcsname{\color[rgb]{0,0,0}}      \expandafter\def\csname LT7\endcsname{\color[rgb]{1,0.3,0}}      \expandafter\def\csname LT8\endcsname{\color[rgb]{0.5,0.5,0.5}}    \else
            \def\colorrgb#1{\color{black}}      \def\colorgray#1{\color[gray]{#1}}      \expandafter\def\csname LTw\endcsname{\color{white}}      \expandafter\def\csname LTb\endcsname{\color{black}}      \expandafter\def\csname LTa\endcsname{\color{black}}      \expandafter\def\csname LT0\endcsname{\color{black}}      \expandafter\def\csname LT1\endcsname{\color{black}}      \expandafter\def\csname LT2\endcsname{\color{black}}      \expandafter\def\csname LT3\endcsname{\color{black}}      \expandafter\def\csname LT4\endcsname{\color{black}}      \expandafter\def\csname LT5\endcsname{\color{black}}      \expandafter\def\csname LT6\endcsname{\color{black}}      \expandafter\def\csname LT7\endcsname{\color{black}}      \expandafter\def\csname LT8\endcsname{\color{black}}    \fi
  \fi
  \setlength{\unitlength}{0.0500bp}  \begin{picture}(3600.00,3528.00)    \gplgaddtomacro\gplfronttext{      \csname LTb\endcsname      \put(726,704){\makebox(0,0)[r]{\strut{}-600}}      \put(726,1065){\makebox(0,0)[r]{\strut{}-500}}      \put(726,1425){\makebox(0,0)[r]{\strut{}-400}}      \put(726,1786){\makebox(0,0)[r]{\strut{}-300}}      \put(726,2146){\makebox(0,0)[r]{\strut{}-200}}      \put(726,2507){\makebox(0,0)[r]{\strut{}-100}}      \put(726,2867){\makebox(0,0)[r]{\strut{} 0}}      \put(858,484){\makebox(0,0){\strut{} 0}}      \put(1327,484){\makebox(0,0){\strut{} 1}}      \put(1796,484){\makebox(0,0){\strut{} 2}}      \put(2265,484){\makebox(0,0){\strut{} 3}}      \put(2734,484){\makebox(0,0){\strut{} 4}}      \put(3203,484){\makebox(0,0){\strut{} 5}}      \put(2030,154){\makebox(0,0){\strut{}$k$}}      \put(2730,2007){\makebox(0,0){\strut{}$\lambda_-$}}    }    \gplgaddtomacro\gplfronttext{    }    \gplbacktext
    \put(0,0){\includegraphics{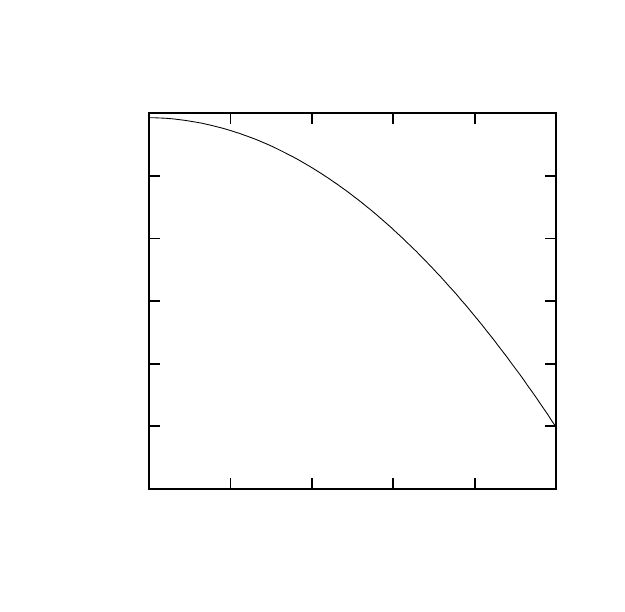}}    \gplfronttext
  \end{picture}\endgroup
 
\end{tabular}
\end{center}
\captionof{figure}{ Roots of the dispersion relation for the operator resulting from a linearization of \eqref{eq1-qs}-\eqref{eq2-qs} around $(\overline{u}_-,\overline{w}_-)$. The parameters are $a_1=2,d_1=1,\kappa_1=3, D_w=2$. left: $\lambda_+$. right: $\lambda_-$. We see that there exist infinitely many positive eigenvalues.}
  \label{fig:DR-0}
\end{minipage}\\ $\,$ \\
Moreover, all steady states except $(\overline{u}_0,\overline{w}_0)$ are linearly unstable. The latter results directly from Theorem 2.1 and Corollary 2.7  in \cite{MCKS13} due to autocatalysis of $u$ for all $u,w\geq 0$.
\begin{theorem}\label{thm:instab}
Under assumptions $a_1>d_1$ and $\kappa_1>2 \frac{d_1}{a_1-d_1}$, 
all steady states $(U,W) \in L^{\infty}(0,1) \times H^1(0,1)$ of \eqref{eq1-qs}-\eqref{eq2-qs} with $U>0$ on a non-zero-measure set are linearly unstable.
\end{theorem}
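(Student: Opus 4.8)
The plan is to reduce the statement to the abstract instability result for reaction--diffusion--ODE systems whose non-diffusing component is autocatalytic, namely Theorem~2.1 and Corollary~2.7 of \cite{MCKS13}; almost all of the work is to verify that the kinetics of \eqref{eq1-qs}--\eqref{eq2-qs} satisfy the autocatalysis hypothesis at every steady state with $U>0$ on a set of positive measure. Write the kinetics as $f(u,w)=u\,F(u,w)$ with $F(u,w)=a_1\frac{uw}{1+uw}-d_1$ and $g(u,w)=-w-u^2w+\kappa_1$. Linearising \eqref{eq1-qs}--\eqref{eq2-qs} at a steady state $(U,W)$ gives the closed operator
\[
\mathcal{L}\begin{pmatrix}\phi\\\psi\end{pmatrix}=\begin{pmatrix}f_u(U,W)\,\phi+f_w(U,W)\,\psi\\ g_u(U,W)\,\phi+D_w\psi_{xx}+g_w(U,W)\,\psi\end{pmatrix}
\]
on $L^2(0,1)\times L^2(0,1)$ with domain $L^2(0,1)\times H^2_N(0,1)$, where $H^2_N$ carries the Neumann conditions \eqref{N1}; all coefficients are bounded measurable functions of $x$ since $U\in L^\infty(0,1)$ and, by elliptic regularity applied to the steady-state equation $D_wW_{xx}=(1+U^2)W-\kappa_1$, $W\in H^2(0,1)$ with $W>0$ on $\{U>0\}$. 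Stationarity forces $U(x)\,F(U(x),W(x))=0$ for a.e.\ $x$, so for a.e.\ $x$ either $U(x)=0$ or $U(x)W(x)=\frac{d_1}{a_1-d_1}$.

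The second ingredient is the value of $f_u$ at the steady state. On $\Omega_+:=\{x:U(x)>0\}$, which has positive measure by assumption, $F(U,W)=0$, hence $f_u(U,W)=F(U,W)+U\,F_u(U,W)=U\,F_u(U,W)$, and $F_u(u,w)=a_1\frac{w}{(1+uw)^2}>0$ for all $u\ge0$, $w>0$ --- this is the autocatalysis of the non-diffusing variable. Plugging in $U(x)W(x)=\frac{d_1}{a_1-d_1}$ and using $a_1>d_1$ yields the explicit constant
\[
f_u(U(x),W(x))=\mu:=\frac{d_1(a_1-d_1)}{a_1}>0\qquad\text{for a.e. }x\in\Omega_+,
\]
which is precisely the entry $a_{11}$ of the constant-coefficient Jacobian $J$ above. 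Thus the hypothesis of \cite{MCKS13} is met: the non-diffusing variable has a uniformly positive self-activation rate on a set of positive measure at the steady state, and Theorem~2.1 together with Corollary~2.7 there give that $(U,W)$ is linearly unstable.

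To make the argument self-contained (and to locate the obstacle) one can reproduce the mechanism directly: $\mu$ lies in the spectrum of $\mathcal{L}$. Take an orthonormal sequence $(\phi_n)$ in $L^2(\Omega_+)$, extended by zero --- possible because $|\Omega_+|>0$ --- so that $\phi_n\rightharpoonup0$ and $\|\phi_n\|=1$, and set $\psi_n:=-(D_w\partial_{xx}+g_w(U,W)-\mu)^{-1}\big(g_u(U,W)\phi_n\big)$. Since that resolvent is compact on $L^2(0,1)$ and $g_u(U,W)\phi_n\rightharpoonup0$, we get $\psi_n\to0$ strongly in $L^2$; then $(\mathcal{L}-\mu)(\phi_n,\psi_n)$ has vanishing second component and first component $f_w(U,W)\psi_n\to0$, using $f_u(U,W)=\mu$ on $\mathrm{supp}\,\phi_n$ and $f_w(U,W)\in L^\infty$. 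Hence $(\phi_n,\psi_n)$ is a singular Weyl sequence for $\mathcal{L}$ at $\mu$, so a point of the spectrum of $\mathcal{L}$ has real part $\mu>0$; the spectral bound of $\mathcal{L}$ is positive and $(U,W)$ is linearly unstable. In the non-generic case $\mu\in\sigma(D_w\partial_{xx}+g_w(U,W))$ one first imposes the finitely many orthogonality conditions that place $g_u(U,W)\phi_n$ in the range of $D_w\partial_{xx}+g_w(U,W)-\mu$ and uses the reduced resolvent.

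I expect the only genuine obstacles to be the two low-regularity points just flagged: $U$ is merely $L^\infty$ and possibly discontinuous, so the identity $f_u(U,W)=\mu$ on $\Omega_+$ must be read a.e.\ and the instability must be exhibited by an $L^2$ singular sequence rather than a classical eigenfunction; and the resonance $\mu\in\sigma(D_w\partial_{xx}+g_w(U,W))$ must be dealt with by passing to the reduced resolvent as above. Everything else --- checking that the $u$-equation carries no diffusion, that $\{u\ge0,\,w\ge0\}$ is an invariant region, and that the abstract statement of \cite{MCKS13} therefore applies verbatim --- is routine.
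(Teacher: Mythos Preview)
Your proposal is correct and takes essentially the same approach as the paper: both reduce the claim to Theorem~2.1 and Corollary~2.7 of \cite{MCKS13} by checking autocatalysis of the non-diffusing component, and the paper's entire proof is in fact the single sentence preceding the theorem (``results directly from Theorem~2.1 and Corollary~2.7 in \cite{MCKS13} due to autocatalysis of $u$ for all $u,w\ge 0$''). Your explicit computation that $f_u(U,W)=d_1(a_1-d_1)/a_1>0$ on $\Omega_+$ and the accompanying Weyl-sequence argument go well beyond what the paper supplies, but they are an elaboration of the same mechanism rather than a different route.
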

\subsection{Boundedness properties}

Numerical solutions of system \eqref{eq1-qs}-\eqref{eq2-qs} presented in the following chapter show unbounded growth of spikes. To understand the underlying phenomenon, we summarize here results on the boundedness of solutions of the model with and without diffusion.  It can be easily shown that the mass of solutions of system \eqref{eq1-qs}-\eqref{eq2-qs} is uniformly bounded in time, see Lemma \ref{thm:mass}. Therefore, unbounded growth of the solutions may happen at most  in isolated points of the spatial domain.  Furthermore, to exclude blow-up induced by unbounded solutions of the kinetic system, such as shown in \cite{Ball}, we check boundedness properties of the kinetic system, see Lemma \ref{thm:bounded}.
\begin{lemma}\label{thm:mass}
Let $u(x,t),w(x,t)$ denote a solution of \eqref{eq1-qs}-\eqref{eq2-qs} for positive initial conditions. Then, it holds
\begin{align}
 \limsup_{t \rightarrow \infty}\big( \frac{1}{a_1}\left\|u(t)\right\|_{L^1}+\left\|w(t)\right\|_{L^1}\big) &\leq \frac{\kappa_1}{\min(d_1,1)},\\
 \limsup_{t \rightarrow \infty} \left\|u(t)\right\|_{L^1} &\leq \frac{a_1}{\min(d_1,1)} \kappa_1,\\
 \limsup_{t \rightarrow \infty} \left\|w(t)\right\|_{L^1} &\leq \kappa_1.
\end{align}
\end{lemma}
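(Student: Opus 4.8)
Since the initial data are nonnegative and, by the existence theory recalled above, $(u,w)$ is a global classical solution which remains nonnegative (by the maximum principle), we have $\|u(t)\|_{L^1}=\int_0^1 u(x,t)\,dx$ and $\|w(t)\|_{L^1}=\int_0^1 w(x,t)\,dx$. The plan is to derive two scalar differential inequalities — one for $\int_0^1 w$ and one for the weighted total mass $y(t):=\frac1{a_1}\int_0^1 u\,dx+\int_0^1 w\,dx$ — and then to read off all three bounds by elementary comparison with linear scalar ODEs. The three assertions will be obtained, respectively, as the third, first and second consequence of these two inequalities.

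First I would integrate \eqref{eq2-qs} in $x$ over $(0,1)$. The Neumann condition \eqref{N1} makes the diffusion term vanish, $\int_0^1 D_w w_{xx}\,dx=D_w\big(w_x(1,t)-w_x(0,t)\big)=0$, and since $u^2w\ge 0$ one obtains
\[
\frac{d}{dt}\int_0^1 w\,dx=-\int_0^1 w\,dx-\int_0^1 u^2w\,dx+\kappa_1\le \kappa_1-\int_0^1 w\,dx .
\]
Comparison with the solution of $z'=\kappa_1-z$ then gives $\limsup_{t\to\infty}\int_0^1 w\,dx\le\kappa_1$, which is the third inequality.

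Next I would integrate \eqref{eq1-qs} in $x$ and divide by $a_1$, using $u_t=a_1\frac{u^2w}{1+uw}-d_1 u$, to get
\[
\frac1{a_1}\frac{d}{dt}\int_0^1 u\,dx=\int_0^1\frac{u^2w}{1+uw}\,dx-\frac{d_1}{a_1}\int_0^1 u\,dx .
\]
Adding this to the identity for $\frac{d}{dt}\int_0^1 w\,dx$ and invoking the elementary algebraic inequality
\[
\frac{u^2w}{1+uw}-u^2w=-\frac{u^3w^2}{1+uw}\le 0\qquad\text{for }u,w\ge 0 ,
\]
the two nonlinear contributions combine into a nonpositive term, leaving
\[
\frac{d}{dt}y(t)\le -\frac{d_1}{a_1}\int_0^1 u\,dx-\int_0^1 w\,dx+\kappa_1\le -\min(d_1,1)\,y(t)+\kappa_1 ,
\]
where the last step uses $d_1\ge\min(d_1,1)$ and $1\ge\min(d_1,1)$. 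Comparison with $z'=\kappa_1-\min(d_1,1)z$ yields $\limsup_{t\to\infty}y(t)\le\kappa_1/\min(d_1,1)$, the first inequality. Finally, since $\int_0^1 w\,dx\ge 0$ we have $\|u(t)\|_{L^1}\le a_1\,y(t)$, and passing to the $\limsup$ gives $\limsup_{t\to\infty}\|u(t)\|_{L^1}\le a_1\kappa_1/\min(d_1,1)$, the second inequality.

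The computation is routine; the only points requiring care are the justification of differentiating the spatial integrals under the integral sign and of evaluating the boundary terms, both of which follow from the global-in-time classical regularity of $(u,w)$ together with the Neumann boundary condition \eqref{N1} and the positivity of the solution. I do not expect a genuine obstacle beyond correctly tracking the constant $\min(d_1,1)$; in particular, no smallness of parameters or structure beyond nonnegativity of $u,w$ and of $u^2w$ is needed.
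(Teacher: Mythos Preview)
Your proposal is correct and follows essentially the same approach as the paper: integrate \eqref{eq2-qs} to bound $\int w$, form the weighted combination $\frac{1}{a_1}\int u+\int w$ and use $\frac{u^2w}{1+uw}\le u^2w$ to obtain the linear differential inequality with rate $\min(d_1,1)$, then extract the bound on $\int u$ from nonnegativity of $w$. The only difference is the order in which you present the two integral estimates, which is immaterial.
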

Moreover, the solution of the kinetic system of \eqref{eq1-qs}-\eqref{eq2-qs} is uniformly bounded in time,
\begin{lemma}\label{thm:bounded}
Let $u(x,t),w(x,t)$ denote a solution of the kinetic system of \eqref{eq1-qs}-\eqref{eq2-qs} for positive initial conditions. Then holds
\begin{align}
 \limsup_{t \rightarrow \infty} u(t)&\leq \frac{a_1}{\min(d_1,1)} \kappa_1,\\
 \limsup_{t \rightarrow \infty} w(t)&\leq \kappa_1.
\end{align}
\end{lemma}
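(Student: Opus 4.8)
The plan is to establish the two bounds by elementary scalar comparison arguments, mirroring at the pointwise level the $L^1$ estimate of Lemma \ref{thm:mass}. The kinetic system here is $u_t=\big(a_1\frac{uw}{1+uw}-d_1\big)u$, $w_t=\kappa_1-w-u^2w$. First I would record that positivity is preserved: the $u$-equation has the form $u_t=g(u,w)u$, so $u$ keeps the sign of its initial datum, while $w_t|_{w=0}=\kappa_1>0$ prevents $w$ from crossing $0$ from above; hence $u,w>0$ for all $t>0$.

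The bound on $w$ is then immediate. Since $u^2w\ge 0$, we have $w_t\le\kappa_1-w$, and comparison with the scalar linear equation $\dot y=\kappa_1-y$ with $y(0)=w(0)$ gives $w(t)\le\kappa_1+(w(0)-\kappa_1)e^{-t}$, whence $\limsup_{t\to\infty}w(t)\le\kappa_1$.

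For $u$ I would pass to the weighted sum $z:=\frac{1}{a_1}u+w$, the same combination that appears in Lemma \ref{thm:mass}. Differentiating and substituting both equations,
\[
\dot z=\frac{u^2w}{1+uw}-u^2w-\frac{d_1}{a_1}u-w+\kappa_1=-\frac{u^3w^2}{1+uw}-\frac{d_1}{a_1}u-w+\kappa_1,
\]
where the first two terms were combined using $\frac{s}{1+s}-s=-\frac{s^2}{1+s}$ with $s=uw$ (after factoring out one power of $u$). Because $u,w>0$, the term $-\frac{u^3w^2}{1+uw}$ is nonpositive, and $-\frac{d_1}{a_1}u-w\le-\min(d_1,1)\,z$; therefore $\dot z\le\kappa_1-\min(d_1,1)\,z$. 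Comparison with $\dot y=\kappa_1-\min(d_1,1)\,y$ yields $\limsup_{t\to\infty}z(t)\le\kappa_1/\min(d_1,1)$, and since $0<u\le a_1z$ this gives $\limsup_{t\to\infty}u(t)\le\frac{a_1}{\min(d_1,1)}\kappa_1$, as claimed.

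The only place where something genuine happens is this last step. Taken in isolation, the $u$-equation only gives $u_t\le(a_1-d_1)u$, since the saturating (Michaelis--Menten type) factor $\frac{uw}{1+uw}$ approaches $1$ as $uw\to\infty$, so $u$ could a priori grow exponentially. What rules this out is that the autocatalytic production of $u$ is exactly compensated by the consumption term $-u^2w$ in the $w$-equation; the computation of $\dot z$ above makes this cancellation explicit, and once the production terms are removed one is left with a dissipative differential inequality. This is precisely the cancellation underlying Lemma \ref{thm:mass}, used here with the diffusion term absent (in the PDE case it drops out after integration by virtue of the Neumann conditions). I expect no further obstacle; the remainder is the standard scalar comparison principle.
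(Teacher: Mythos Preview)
Your argument is correct and follows essentially the same route as the paper: the paper's proof simply says to repeat the estimate of Lemma~\ref{thm:mass} without integrating over $\Omega$, and that is precisely what you do with the weighted sum $z=\frac{1}{a_1}u+w$, together with the direct comparison $w_t\le\kappa_1-w$. Your explicit identification of the cancellation $\frac{u^2w}{1+uw}-u^2w=-\frac{u^3w^2}{1+uw}\le0$ is exactly the inequality used (implicitly) in \eqref{m:u+w}.
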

Both lemmas can be proven similarly as it was shown in \cite{MCKS12} for the three equation model. More details are deferred to the Appendix.

We conclude from Lemma \ref{thm:bounded} that the mass concentration observed in numerical simulations  does not result from a blow-up of the solution of the kinetic system.

\section{Numerical approach}
Numerical approximations of solutions to system \eqref{eq1-qs}-\eqref{eq2-qs} presented in this paper are obtained using the program library \textit{deal.ii}, \cite{deal.ii}.\\
Simulations using adaptive grid refinement based on cell-wise evaluation of the proposed error indicators in \cite{ELS2000} show a growth of spikes, see Figure \ref{fig:1s}.\\
The question of what is seen in numerical simulations motivated us to undertake a numerical study of the pattern formation phenomenon.
To allow a rigorous argumentation using classical finite-element analysis, we investigate the asymptotic behavior of the numerical solution for spatially homogeneous meshes.\\
For space discretization, we use a finite-element scheme with piecewise linear, globally continuous ansatzfunctions. The time discretization is performed using the implicit Euler scheme or the Crank-Nicholson scheme.\\
Convergence for such scheme for solutions of systems of type \eqref{eq1-qs}-\eqref{eq2-qs} is well known, see \cite{ELS2000}.\\
The low order of the space discretization is due to the fact that preliminary simulations already showed emergence of spikes, corresponding to a large second derivative in space.

\section{Numerical analysis of the pattern formation phenomenon}
We choose parameters 
\begin{equation}
a_1=2, d_1=1, \kappa_1=3
\end{equation}
and diffusion coefficient $D_w=6$.\\
A numerically obtained solution for different parameters, $a_1=2.5, d_1=1.5, \kappa_1=4$ is shown in the Appendix in figure \ref{fig:np}. It shows qualitatively the same behavior.
We recall that system \eqref{eq1-qs}-\eqref{eq2-qs} exhibits Turing type diffusion driven instability, but all positive steady states are linearly unstable.\\
\subsection{Unbounded growth and spike formation}
Initial conditions are chosen as a perturbation of the stable stationary solution $(\overline{u},\overline{w})$ of the kinetics system of \eqref{eq1-qs}-\eqref{eq2-qs}:
\begin{equation}\label{IC:sh}
\begin{aligned}
 u_0(x)     &= \overline{u}_- + \epsilon_1 p(x),\\
 w_0(x)     &= \overline{w}_-, \\
\end{aligned}
\end{equation}
where the 'perturbation function' $p(x)$ satisfies the following conditions:
\begin{align}
 p \text{ is a polynomial of degree two on } &(0,s-\epsilon),(s-\epsilon,s+\epsilon),(s+\epsilon,1),\label{ICdef1}\\
 p &\in C^1(0,1),\label{ICdef2}\\
 p'(0)&=p'(1)=0,\label{ICdef3}\\
 p(0)&=p(1)=-1,\label{ICdef4}\\
 p(s)&=1\label{ICdef5},
\end{align}
and is thereby uniquely defined by the pair $(s,\epsilon)$.
The explicit formula for $p$ can be found in the appendix, \eqref{spline}, an illustration can be found in figure \ref{fig:IC}.
\begin{figure}[ht]
  \centering
   \makeatletter{}\begingroup
  \makeatletter
  \providecommand\color[2][]{    \GenericError{(gnuplot) \space\space\space\@spaces}{      Package color not loaded in conjunction with
      terminal option `colourtext'    }{See the gnuplot documentation for explanation.    }{Either use 'blacktext' in gnuplot or load the package
      color.sty in LaTeX.}    \renewcommand\color[2][]{}  }  \providecommand\includegraphics[2][]{    \GenericError{(gnuplot) \space\space\space\@spaces}{      Package graphicx or graphics not loaded    }{See the gnuplot documentation for explanation.    }{The gnuplot epslatex terminal needs graphicx.sty or graphics.sty.}    \renewcommand\includegraphics[2][]{}  }  \providecommand\rotatebox[2]{#2}  \@ifundefined{ifGPcolor}{    \newif\ifGPcolor
    \GPcolorfalse
  }{}  \@ifundefined{ifGPblacktext}{    \newif\ifGPblacktext
    \GPblacktexttrue
  }{}    \let\gplgaddtomacro\g@addto@macro
    \gdef\gplbacktext{}  \gdef\gplfronttext{}  \makeatother
  \ifGPblacktext
        \def\colorrgb#1{}    \def\colorgray#1{}  \else
        \ifGPcolor
      \def\colorrgb#1{\color[rgb]{#1}}      \def\colorgray#1{\color[gray]{#1}}      \expandafter\def\csname LTw\endcsname{\color{white}}      \expandafter\def\csname LTb\endcsname{\color{black}}      \expandafter\def\csname LTa\endcsname{\color{black}}      \expandafter\def\csname LT0\endcsname{\color[rgb]{1,0,0}}      \expandafter\def\csname LT1\endcsname{\color[rgb]{0,1,0}}      \expandafter\def\csname LT2\endcsname{\color[rgb]{0,0,1}}      \expandafter\def\csname LT3\endcsname{\color[rgb]{1,0,1}}      \expandafter\def\csname LT4\endcsname{\color[rgb]{0,1,1}}      \expandafter\def\csname LT5\endcsname{\color[rgb]{1,1,0}}      \expandafter\def\csname LT6\endcsname{\color[rgb]{0,0,0}}      \expandafter\def\csname LT7\endcsname{\color[rgb]{1,0.3,0}}      \expandafter\def\csname LT8\endcsname{\color[rgb]{0.5,0.5,0.5}}    \else
            \def\colorrgb#1{\color{black}}      \def\colorgray#1{\color[gray]{#1}}      \expandafter\def\csname LTw\endcsname{\color{white}}      \expandafter\def\csname LTb\endcsname{\color{black}}      \expandafter\def\csname LTa\endcsname{\color{black}}      \expandafter\def\csname LT0\endcsname{\color{black}}      \expandafter\def\csname LT1\endcsname{\color{black}}      \expandafter\def\csname LT2\endcsname{\color{black}}      \expandafter\def\csname LT3\endcsname{\color{black}}      \expandafter\def\csname LT4\endcsname{\color{black}}      \expandafter\def\csname LT5\endcsname{\color{black}}      \expandafter\def\csname LT6\endcsname{\color{black}}      \expandafter\def\csname LT7\endcsname{\color{black}}      \expandafter\def\csname LT8\endcsname{\color{black}}    \fi
  \fi
  \setlength{\unitlength}{0.0500bp}  \begin{picture}(4320.00,3024.00)    \gplgaddtomacro\gplfronttext{      \csname LTb\endcsname      \put(726,704){\makebox(0,0)[r]{\strut{}-1.5}}      \put(726,1047){\makebox(0,0)[r]{\strut{}-1}}      \put(726,1389){\makebox(0,0)[r]{\strut{}-0.5}}      \put(726,1732){\makebox(0,0)[r]{\strut{} 0}}      \put(726,2074){\makebox(0,0)[r]{\strut{} 0.5}}      \put(726,2417){\makebox(0,0)[r]{\strut{} 1}}      \put(726,2759){\makebox(0,0)[r]{\strut{} 1.5}}      \put(858,484){\makebox(0,0){\strut{} 0}}      \put(1471,484){\makebox(0,0){\strut{} 0.2}}      \put(2084,484){\makebox(0,0){\strut{} 0.4}}      \put(2697,484){\makebox(0,0){\strut{} 0.6}}      \put(3310,484){\makebox(0,0){\strut{} 0.8}}      \put(3923,484){\makebox(0,0){\strut{} 1}}      \put(2390,154){\makebox(0,0){\strut{}x}}    }    \gplgaddtomacro\gplfronttext{      \csname LTb\endcsname      \put(2936,2586){\makebox(0,0)[r]{\strut{}$p(x)$}}    }    \gplbacktext
    \put(0,0){\includegraphics{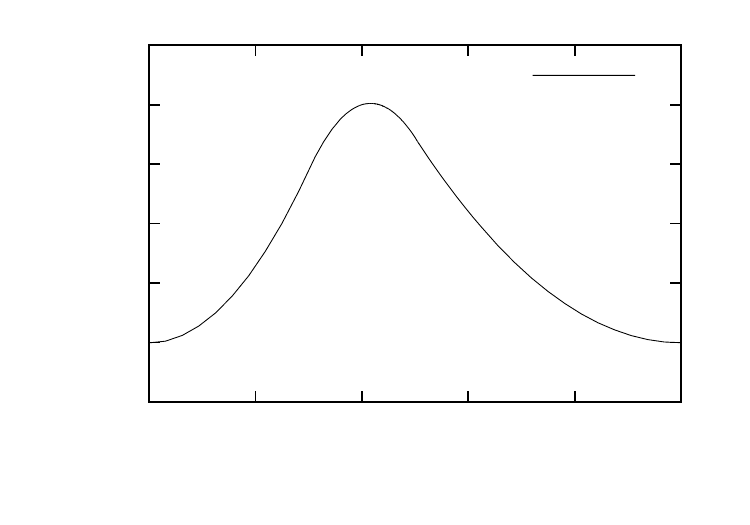}}    \gplfronttext
  \end{picture}\endgroup
 
  \caption{Illustration of the perturbation function $p(x)$, defined by \eqref{ICdef1}-\eqref{ICdef5} for $s=0.4,\epsilon=0.1$. $\max_{x \in \Omega}p(x)$ is always assumed in $(s-\epsilon,s+\epsilon)$.}
  \label{fig:IC}
\end{figure}
In figure \ref{fig:1s}, the solution for initial conditions \eqref{IC:sh} for $s=0.4, \epsilon_1 = 0.05, \epsilon=0.1$ is shown. We observe exponential growth in a single point and decay towards zero otherwise. The maximum value of the numerically obtained solution keeps growing.
\subsection{Spike position and initial conditions}
Simulations performed using the parameters $a_1=2,d_1=1,\kappa_1=3, D_w=6$ and initial conditions \eqref{IC:sh} show emergence of spikes at the maximum of the initial conditions, see Table \ref{tab:s_pos}.
For the linearized problem, this is heuristically reasonable since almost all eigenmodes of the Laplace Operator are unstable with almost the same eigenvalue, see Lemma \ref{lem:EV} or figure \ref{fig:DR-0} for an illustration.
\\
\begin{minipage}[H]{40em}
\begin{center}
\begin{tabular}{p{15em}p{15em}}
   \centering
   \makeatletter{}\begingroup
  \makeatletter
  \providecommand\color[2][]{    \GenericError{(gnuplot) \space\space\space\@spaces}{      Package color not loaded in conjunction with
      terminal option `colourtext'    }{See the gnuplot documentation for explanation.    }{Either use 'blacktext' in gnuplot or load the package
      color.sty in LaTeX.}    \renewcommand\color[2][]{}  }  \providecommand\includegraphics[2][]{    \GenericError{(gnuplot) \space\space\space\@spaces}{      Package graphicx or graphics not loaded    }{See the gnuplot documentation for explanation.    }{The gnuplot epslatex terminal needs graphicx.sty or graphics.sty.}    \renewcommand\includegraphics[2][]{}  }  \providecommand\rotatebox[2]{#2}  \@ifundefined{ifGPcolor}{    \newif\ifGPcolor
    \GPcolorfalse
  }{}  \@ifundefined{ifGPblacktext}{    \newif\ifGPblacktext
    \GPblacktexttrue
  }{}    \let\gplgaddtomacro\g@addto@macro
    \gdef\gplbacktext{}  \gdef\gplfronttext{}  \makeatother
  \ifGPblacktext
        \def\colorrgb#1{}    \def\colorgray#1{}  \else
        \ifGPcolor
      \def\colorrgb#1{\color[rgb]{#1}}      \def\colorgray#1{\color[gray]{#1}}      \expandafter\def\csname LTw\endcsname{\color{white}}      \expandafter\def\csname LTb\endcsname{\color{black}}      \expandafter\def\csname LTa\endcsname{\color{black}}      \expandafter\def\csname LT0\endcsname{\color[rgb]{1,0,0}}      \expandafter\def\csname LT1\endcsname{\color[rgb]{0,1,0}}      \expandafter\def\csname LT2\endcsname{\color[rgb]{0,0,1}}      \expandafter\def\csname LT3\endcsname{\color[rgb]{1,0,1}}      \expandafter\def\csname LT4\endcsname{\color[rgb]{0,1,1}}      \expandafter\def\csname LT5\endcsname{\color[rgb]{1,1,0}}      \expandafter\def\csname LT6\endcsname{\color[rgb]{0,0,0}}      \expandafter\def\csname LT7\endcsname{\color[rgb]{1,0.3,0}}      \expandafter\def\csname LT8\endcsname{\color[rgb]{0.5,0.5,0.5}}    \else
            \def\colorrgb#1{\color{black}}      \def\colorgray#1{\color[gray]{#1}}      \expandafter\def\csname LTw\endcsname{\color{white}}      \expandafter\def\csname LTb\endcsname{\color{black}}      \expandafter\def\csname LTa\endcsname{\color{black}}      \expandafter\def\csname LT0\endcsname{\color{black}}      \expandafter\def\csname LT1\endcsname{\color{black}}      \expandafter\def\csname LT2\endcsname{\color{black}}      \expandafter\def\csname LT3\endcsname{\color{black}}      \expandafter\def\csname LT4\endcsname{\color{black}}      \expandafter\def\csname LT5\endcsname{\color{black}}      \expandafter\def\csname LT6\endcsname{\color{black}}      \expandafter\def\csname LT7\endcsname{\color{black}}      \expandafter\def\csname LT8\endcsname{\color{black}}    \fi
  \fi
  \setlength{\unitlength}{0.0500bp}  \begin{picture}(3600.00,3528.00)    \gplgaddtomacro\gplbacktext{    }    \gplgaddtomacro\gplfronttext{      \csname LTb\endcsname      \put(2135,2804){\makebox(0,0)[r]{\strut{}shape of $u_0$}}      \csname LTb\endcsname      \put(507,1092){\makebox(0,0){\strut{} 0}}      \put(1697,557){\makebox(0,0){\strut{} 25}}      \put(908,769){\makebox(0,0){\strut{}$t$}}      \put(1893,605){\makebox(0,0){\strut{} 0}}      \put(3083,1090){\makebox(0,0){\strut{} 1}}      \put(2692,819){\makebox(0,0){\strut{}$x$}}      \put(484,1656){\makebox(0,0)[r]{\strut{} 0}}      \put(484,2004){\makebox(0,0)[r]{\strut{} 100}}      \put(484,2354){\makebox(0,0)[r]{\strut{} 200}}    }    \gplbacktext
    \put(0,0){\includegraphics{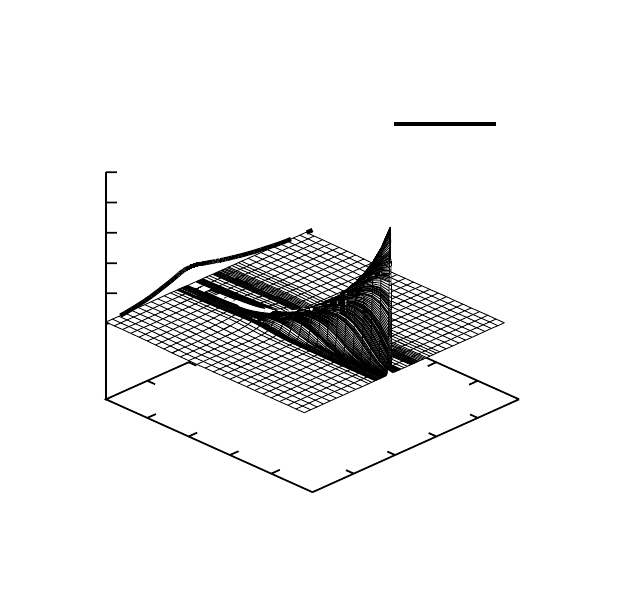}}    \gplfronttext
  \end{picture}\endgroup
 
&
   \centering
   \makeatletter{}\begingroup
  \makeatletter
  \providecommand\color[2][]{    \GenericError{(gnuplot) \space\space\space\@spaces}{      Package color not loaded in conjunction with
      terminal option `colourtext'    }{See the gnuplot documentation for explanation.    }{Either use 'blacktext' in gnuplot or load the package
      color.sty in LaTeX.}    \renewcommand\color[2][]{}  }  \providecommand\includegraphics[2][]{    \GenericError{(gnuplot) \space\space\space\@spaces}{      Package graphicx or graphics not loaded    }{See the gnuplot documentation for explanation.    }{The gnuplot epslatex terminal needs graphicx.sty or graphics.sty.}    \renewcommand\includegraphics[2][]{}  }  \providecommand\rotatebox[2]{#2}  \@ifundefined{ifGPcolor}{    \newif\ifGPcolor
    \GPcolorfalse
  }{}  \@ifundefined{ifGPblacktext}{    \newif\ifGPblacktext
    \GPblacktexttrue
  }{}    \let\gplgaddtomacro\g@addto@macro
    \gdef\gplbacktext{}  \gdef\gplfronttext{}  \makeatother
  \ifGPblacktext
        \def\colorrgb#1{}    \def\colorgray#1{}  \else
        \ifGPcolor
      \def\colorrgb#1{\color[rgb]{#1}}      \def\colorgray#1{\color[gray]{#1}}      \expandafter\def\csname LTw\endcsname{\color{white}}      \expandafter\def\csname LTb\endcsname{\color{black}}      \expandafter\def\csname LTa\endcsname{\color{black}}      \expandafter\def\csname LT0\endcsname{\color[rgb]{1,0,0}}      \expandafter\def\csname LT1\endcsname{\color[rgb]{0,1,0}}      \expandafter\def\csname LT2\endcsname{\color[rgb]{0,0,1}}      \expandafter\def\csname LT3\endcsname{\color[rgb]{1,0,1}}      \expandafter\def\csname LT4\endcsname{\color[rgb]{0,1,1}}      \expandafter\def\csname LT5\endcsname{\color[rgb]{1,1,0}}      \expandafter\def\csname LT6\endcsname{\color[rgb]{0,0,0}}      \expandafter\def\csname LT7\endcsname{\color[rgb]{1,0.3,0}}      \expandafter\def\csname LT8\endcsname{\color[rgb]{0.5,0.5,0.5}}    \else
            \def\colorrgb#1{\color{black}}      \def\colorgray#1{\color[gray]{#1}}      \expandafter\def\csname LTw\endcsname{\color{white}}      \expandafter\def\csname LTb\endcsname{\color{black}}      \expandafter\def\csname LTa\endcsname{\color{black}}      \expandafter\def\csname LT0\endcsname{\color{black}}      \expandafter\def\csname LT1\endcsname{\color{black}}      \expandafter\def\csname LT2\endcsname{\color{black}}      \expandafter\def\csname LT3\endcsname{\color{black}}      \expandafter\def\csname LT4\endcsname{\color{black}}      \expandafter\def\csname LT5\endcsname{\color{black}}      \expandafter\def\csname LT6\endcsname{\color{black}}      \expandafter\def\csname LT7\endcsname{\color{black}}      \expandafter\def\csname LT8\endcsname{\color{black}}    \fi
  \fi
  \setlength{\unitlength}{0.0500bp}  \begin{picture}(3600.00,3528.00)    \gplgaddtomacro\gplbacktext{    }    \gplgaddtomacro\gplfronttext{      \csname LTb\endcsname      \put(507,1092){\makebox(0,0){\strut{} 0}}      \put(1697,557){\makebox(0,0){\strut{} 25}}      \put(908,819){\makebox(0,0){\strut{}$t$}}      \put(1893,615){\makebox(0,0){\strut{} 0}}      \put(3083,1150){\makebox(0,0){\strut{} 1}}      \put(2692,819){\makebox(0,0){\strut{}$x$}}      \put(484,1656){\makebox(0,0)[r]{\strut{} 0}}      \put(484,1874){\makebox(0,0)[r]{\strut{} 0.1}}      \put(484,2092){\makebox(0,0)[r]{\strut{} 0.2}}      \put(484,2310){\makebox(0,0)[r]{\strut{} 0.3}}      \put(484,2528){\makebox(0,0)[r]{\strut{} 0.4}}    }    \gplbacktext
    \put(0,0){\includegraphics{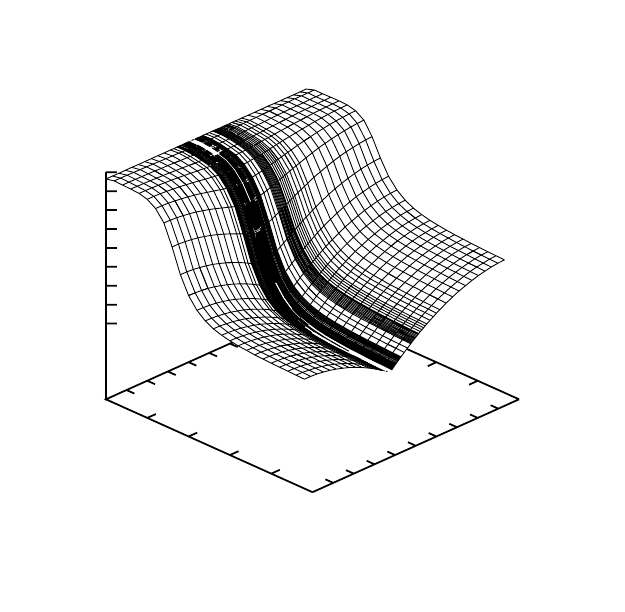}}    \gplfronttext
  \end{picture}\endgroup
 
\end{tabular}
\end{center}
\captionof{figure}{Numerically obtained solution for initial conditions \eqref{IC:sh} with $s=0.4,\epsilon=0.1, \epsilon_1=0.05$ and diffusion coefficient $D_w=6$, $\overline{u}=\frac{3+\sqrt{5}}{2}\approx 2.62,\overline{w}=\frac{3-\sqrt{5}}{2}\approx 0.382$. Left: component $u$. Right: component $w$. We observe formation of a spike at $x \approx 0.43$, which keeps growing exponentially in time.}
\label{fig:1s}
\end{minipage}\\ $\,$ \\ 
\begin{lemma}\label{le:ampl}
Let $J$ denote the operator resulting from the linearization of \eqref{eq1-qs}-\eqref{eq2-qs} around $(\overline{u}_-,\overline{w}_-)$ and consider the initial value problem
\begin{equation}\label{lin}
\frac{d}{dt} \begin{bmatrix} \phi \\ \psi \end{bmatrix} = J \begin{bmatrix} \phi \\ \psi \end{bmatrix},
\end{equation}
with homogeneous Neumann boundary (zero-flux) conditions for $\psi$.\\
As initial conditions take $(\psi_0,\rho_0)=(\phi_k, \frac{a_{21}}{\lambda_+-a_{22}+D_w \pi^2 k^2} \phi_k)$, where $\phi_k$ denotes the eigenfunction of the Laplace-Operator (Neumann) associated to the $k$th eigenvalue and $(a_{ij})$ denotes the Jacobian of the kinetics system at $(\overline{u}_-,\overline{w}_-)$.\\
Then $e^{\lambda_+(k)t}(\psi_0,\rho_0)$ is the solution of \eqref{lin} for homogeneous Neumann boundary conditions.
\end{lemma}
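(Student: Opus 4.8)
The plan is to verify the assertion by a direct substitution that reduces the infinite--dimensional problem \eqref{lin} to a two--dimensional algebraic eigenvalue problem whose spectrum is, by construction, exactly the set of roots of the dispersion relation. First I would recall that the Neumann eigenfunctions of the Laplacian on $(0,1)$ satisfy $\Delta\phi_k=-(\pi k)^2\phi_k$ together with $\phi_k'(0)=\phi_k'(1)=0$; hence any function of the separated form $(x,t)\mapsto e^{\mu t}(c_1,c_2)^{T}\phi_k(x)$ automatically obeys the zero--flux condition imposed on the second component, so it only remains to check the differential equation itself.

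Next I would insert the ansatz $(\phi,\psi)(x,t)=e^{\lambda_+(k)t}(c_1,c_2)^{T}\phi_k(x)$ into $\tfrac{d}{dt}(\phi,\psi)^{T}=J(\phi,\psi)^{T}$. Using $\Delta\phi_k=-(\pi k)^2\phi_k$, the factor $\phi_k(x)$ cancels and the system collapses to
\[
\lambda_+(k)\begin{bmatrix}c_1\\ c_2\end{bmatrix}=M_k\begin{bmatrix}c_1\\ c_2\end{bmatrix},\qquad M_k:=\begin{bmatrix}a_{11}&a_{12}\\ a_{21}&a_{22}-D_w(\pi k)^2\end{bmatrix}.
\]
By definition of the dispersion relation, $\det\!\bigl(M_k-\lambda I\bigr)=\text{disp}(\lambda,k)$, and $\lambda_+(k)$ is one of its roots; hence $\lambda_+(k)$ is an eigenvalue of $M_k$, so $M_k-\lambda_+(k)I$ is singular and possesses a nontrivial kernel.

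It then remains to identify this kernel with the explicit amplitude vector appearing in the statement. The second row of $M_k-\lambda_+(k)I$ is $\bigl(a_{21},\,a_{22}-D_w(\pi k)^2-\lambda_+(k)\bigr)$; since $a_{21}=-2d_1/(a_1-d_1)\neq0$ this row is nonzero, so the matrix has rank one and a one--dimensional kernel, and it suffices to solve the single scalar equation it furnishes. Normalising $c_1=1$ gives $c_2=a_{21}\big/\bigl(\lambda_+(k)-a_{22}+D_w(\pi k)^2\bigr)$, and, the matrix being of rank one, the equation coming from the first row is then satisfied automatically. The only step that genuinely requires care is the non--vanishing of this denominator: if $\lambda_+(k)-a_{22}+D_w(\pi k)^2$ were zero, the relation $\text{disp}(\lambda_+,k)=(a_{11}-\lambda_+)\bigl(a_{22}-D_w(\pi k)^2-\lambda_+\bigr)-a_{12}a_{21}=0$ would force $a_{12}a_{21}=0$, contradicting $a_{12}=d_1^2/(a_1\overline{w}_-^2)>0$ and $a_{21}\neq0$. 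Substituting back confirms that $e^{\lambda_+(k)t}(1,c_2)^{T}\phi_k$ solves \eqref{lin} with exactly the prescribed initial data. Apart from this short non--degeneracy check the whole argument is routine, so I do not anticipate any substantial obstacle.
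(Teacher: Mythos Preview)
Your proposal is correct and follows essentially the same route as the paper: both reduce the problem to the $2\times 2$ algebraic eigenvalue problem for $M_k$ and read off the eigenvector associated to $\lambda_+(k)$. Your version is more carefully written---in particular you justify the non-vanishing of the denominator and the automatic satisfaction of the boundary condition---whereas the paper's proof is a one-line statement of the eigenvector of the $2\times2$ matrix followed by the observation that $v_k\phi_k$ is then an eigenvector of $J$.
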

Heuristically speaking, the initial perturbation of $u$ is self-amplifying for large $D_w$ if it is much larger than the perturbation of $w$, because $\lambda_+(k) \rightarrow a_{11}>0$.\\
This heuristic implication leads to the question what happens for more complex initial conditions. In Figure \ref{fig:cos}, we plot the numerical solution for initial conditions
\begin{equation}\label{IC:cos}
\begin{aligned}
 u_0(x)&=\overline{u}_--\epsilon \cos(4 \pi x),\\
 w_0(x)&=\overline{w}_-.
\end{aligned}
\end{equation}
and in figure \ref{fig:cosxx} the numerical solution for initial conditions 
\begin{equation}\label{IC:cosxx}
\begin{aligned}
 u_0(x)&=\overline{u}_--\epsilon \cos(4 \pi x^2),\\
 w_0(x)&=\overline{w}_-.
\end{aligned}
\end{equation}
\begin{minipage}[H]{40em}
\begin{center}
\begin{tabular}{p{15em}p{15em}}
   \centering
   \makeatletter{}\begingroup
  \makeatletter
  \providecommand\color[2][]{    \GenericError{(gnuplot) \space\space\space\@spaces}{      Package color not loaded in conjunction with
      terminal option `colourtext'    }{See the gnuplot documentation for explanation.    }{Either use 'blacktext' in gnuplot or load the package
      color.sty in LaTeX.}    \renewcommand\color[2][]{}  }  \providecommand\includegraphics[2][]{    \GenericError{(gnuplot) \space\space\space\@spaces}{      Package graphicx or graphics not loaded    }{See the gnuplot documentation for explanation.    }{The gnuplot epslatex terminal needs graphicx.sty or graphics.sty.}    \renewcommand\includegraphics[2][]{}  }  \providecommand\rotatebox[2]{#2}  \@ifundefined{ifGPcolor}{    \newif\ifGPcolor
    \GPcolorfalse
  }{}  \@ifundefined{ifGPblacktext}{    \newif\ifGPblacktext
    \GPblacktexttrue
  }{}    \let\gplgaddtomacro\g@addto@macro
    \gdef\gplbacktext{}  \gdef\gplfronttext{}  \makeatother
  \ifGPblacktext
        \def\colorrgb#1{}    \def\colorgray#1{}  \else
        \ifGPcolor
      \def\colorrgb#1{\color[rgb]{#1}}      \def\colorgray#1{\color[gray]{#1}}      \expandafter\def\csname LTw\endcsname{\color{white}}      \expandafter\def\csname LTb\endcsname{\color{black}}      \expandafter\def\csname LTa\endcsname{\color{black}}      \expandafter\def\csname LT0\endcsname{\color[rgb]{1,0,0}}      \expandafter\def\csname LT1\endcsname{\color[rgb]{0,1,0}}      \expandafter\def\csname LT2\endcsname{\color[rgb]{0,0,1}}      \expandafter\def\csname LT3\endcsname{\color[rgb]{1,0,1}}      \expandafter\def\csname LT4\endcsname{\color[rgb]{0,1,1}}      \expandafter\def\csname LT5\endcsname{\color[rgb]{1,1,0}}      \expandafter\def\csname LT6\endcsname{\color[rgb]{0,0,0}}      \expandafter\def\csname LT7\endcsname{\color[rgb]{1,0.3,0}}      \expandafter\def\csname LT8\endcsname{\color[rgb]{0.5,0.5,0.5}}    \else
            \def\colorrgb#1{\color{black}}      \def\colorgray#1{\color[gray]{#1}}      \expandafter\def\csname LTw\endcsname{\color{white}}      \expandafter\def\csname LTb\endcsname{\color{black}}      \expandafter\def\csname LTa\endcsname{\color{black}}      \expandafter\def\csname LT0\endcsname{\color{black}}      \expandafter\def\csname LT1\endcsname{\color{black}}      \expandafter\def\csname LT2\endcsname{\color{black}}      \expandafter\def\csname LT3\endcsname{\color{black}}      \expandafter\def\csname LT4\endcsname{\color{black}}      \expandafter\def\csname LT5\endcsname{\color{black}}      \expandafter\def\csname LT6\endcsname{\color{black}}      \expandafter\def\csname LT7\endcsname{\color{black}}      \expandafter\def\csname LT8\endcsname{\color{black}}    \fi
  \fi
  \setlength{\unitlength}{0.0500bp}  \begin{picture}(3600.00,3528.00)    \gplgaddtomacro\gplbacktext{    }    \gplgaddtomacro\gplfronttext{      \csname LTb\endcsname      \put(2135,2804){\makebox(0,0)[r]{\strut{}shape of $u_0$}}      \csname LTb\endcsname      \put(507,1092){\makebox(0,0){\strut{} 0}}      \put(1697,557){\makebox(0,0){\strut{} 25}}      \put(908,819){\makebox(0,0){\strut{}$t$}}      \put(1893,615){\makebox(0,0){\strut{} 0}}      \put(3083,1150){\makebox(0,0){\strut{} 1}}      \put(2692,819){\makebox(0,0){\strut{}$x$}}      \put(484,1656){\makebox(0,0)[r]{\strut{} 0}}      \put(484,1946){\makebox(0,0)[r]{\strut{} 40}}      \put(484,2237){\makebox(0,0)[r]{\strut{} 80}}      \put(484,2528){\makebox(0,0)[r]{\strut{} 120}}    }    \gplbacktext
    \put(0,0){\includegraphics{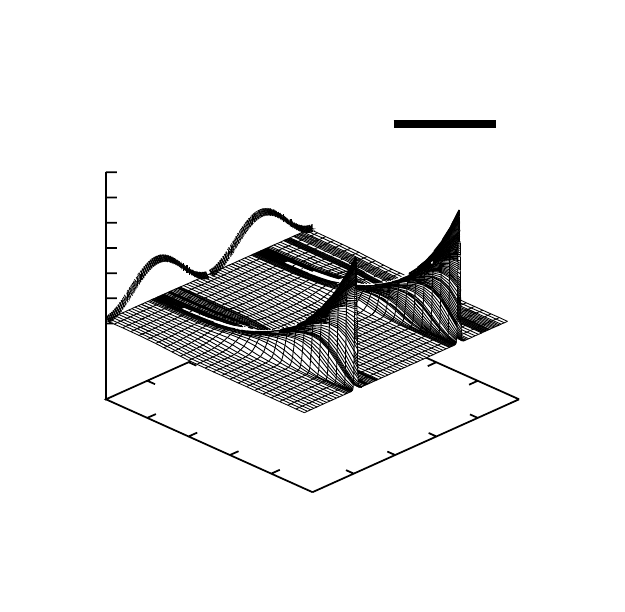}}    \gplfronttext
  \end{picture}\endgroup
 
&
   \centering
   \makeatletter{}\begingroup
  \makeatletter
  \providecommand\color[2][]{    \GenericError{(gnuplot) \space\space\space\@spaces}{      Package color not loaded in conjunction with
      terminal option `colourtext'    }{See the gnuplot documentation for explanation.    }{Either use 'blacktext' in gnuplot or load the package
      color.sty in LaTeX.}    \renewcommand\color[2][]{}  }  \providecommand\includegraphics[2][]{    \GenericError{(gnuplot) \space\space\space\@spaces}{      Package graphicx or graphics not loaded    }{See the gnuplot documentation for explanation.    }{The gnuplot epslatex terminal needs graphicx.sty or graphics.sty.}    \renewcommand\includegraphics[2][]{}  }  \providecommand\rotatebox[2]{#2}  \@ifundefined{ifGPcolor}{    \newif\ifGPcolor
    \GPcolorfalse
  }{}  \@ifundefined{ifGPblacktext}{    \newif\ifGPblacktext
    \GPblacktexttrue
  }{}    \let\gplgaddtomacro\g@addto@macro
    \gdef\gplbacktext{}  \gdef\gplfronttext{}  \makeatother
  \ifGPblacktext
        \def\colorrgb#1{}    \def\colorgray#1{}  \else
        \ifGPcolor
      \def\colorrgb#1{\color[rgb]{#1}}      \def\colorgray#1{\color[gray]{#1}}      \expandafter\def\csname LTw\endcsname{\color{white}}      \expandafter\def\csname LTb\endcsname{\color{black}}      \expandafter\def\csname LTa\endcsname{\color{black}}      \expandafter\def\csname LT0\endcsname{\color[rgb]{1,0,0}}      \expandafter\def\csname LT1\endcsname{\color[rgb]{0,1,0}}      \expandafter\def\csname LT2\endcsname{\color[rgb]{0,0,1}}      \expandafter\def\csname LT3\endcsname{\color[rgb]{1,0,1}}      \expandafter\def\csname LT4\endcsname{\color[rgb]{0,1,1}}      \expandafter\def\csname LT5\endcsname{\color[rgb]{1,1,0}}      \expandafter\def\csname LT6\endcsname{\color[rgb]{0,0,0}}      \expandafter\def\csname LT7\endcsname{\color[rgb]{1,0.3,0}}      \expandafter\def\csname LT8\endcsname{\color[rgb]{0.5,0.5,0.5}}    \else
            \def\colorrgb#1{\color{black}}      \def\colorgray#1{\color[gray]{#1}}      \expandafter\def\csname LTw\endcsname{\color{white}}      \expandafter\def\csname LTb\endcsname{\color{black}}      \expandafter\def\csname LTa\endcsname{\color{black}}      \expandafter\def\csname LT0\endcsname{\color{black}}      \expandafter\def\csname LT1\endcsname{\color{black}}      \expandafter\def\csname LT2\endcsname{\color{black}}      \expandafter\def\csname LT3\endcsname{\color{black}}      \expandafter\def\csname LT4\endcsname{\color{black}}      \expandafter\def\csname LT5\endcsname{\color{black}}      \expandafter\def\csname LT6\endcsname{\color{black}}      \expandafter\def\csname LT7\endcsname{\color{black}}      \expandafter\def\csname LT8\endcsname{\color{black}}    \fi
  \fi
  \setlength{\unitlength}{0.0500bp}  \begin{picture}(3600.00,3528.00)    \gplgaddtomacro\gplbacktext{    }    \gplgaddtomacro\gplfronttext{      \csname LTb\endcsname      \put(507,1092){\makebox(0,0){\strut{} 0}}      \put(1697,557){\makebox(0,0){\strut{} 25}}      \put(908,819){\makebox(0,0){\strut{}$t$}}      \put(1893,615){\makebox(0,0){\strut{} 0}}      \put(3083,1150){\makebox(0,0){\strut{} 1}}      \put(2692,819){\makebox(0,0){\strut{}$x$}}      \put(484,1656){\makebox(0,0)[r]{\strut{} 0}}      \put(484,1874){\makebox(0,0)[r]{\strut{} 0.1}}      \put(484,2092){\makebox(0,0)[r]{\strut{} 0.2}}      \put(484,2310){\makebox(0,0)[r]{\strut{} 0.3}}      \put(484,2528){\makebox(0,0)[r]{\strut{} 0.4}}    }    \gplbacktext
    \put(0,0){\includegraphics{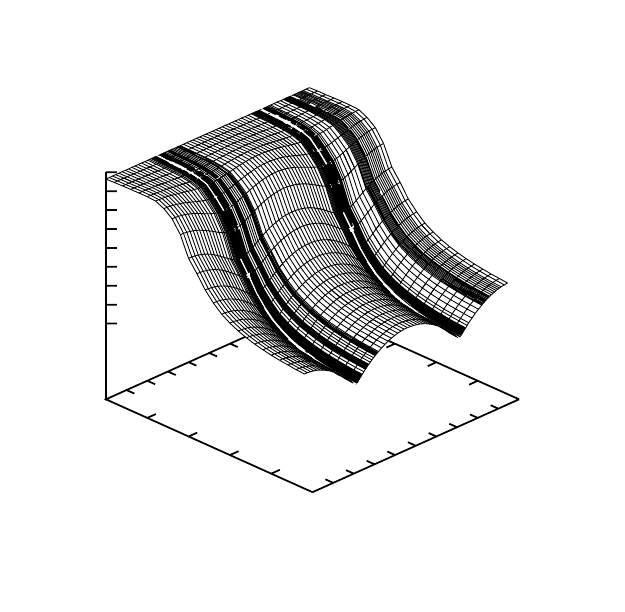}}    \gplfronttext
  \end{picture}\endgroup
 
\end{tabular}
\end{center}
\captionof{figure}{Numerical solution for initial conditions \eqref{IC:cos} with $\epsilon=0.05$ and diffusion coefficient $D_w=2$, $\overline{u}=\frac{3+\sqrt{5}}{2}\approx 2.62,\overline{w}=\frac{3-\sqrt{5}}{2}\approx 0.382$. Left: component $u$. Right: component $w$. We observe formation of spikes at the position of local maxima of the initial conditions.}
\label{fig:cos}
\end{minipage}\\ $\,$ \\ 
\begin{minipage}[H]{40em}
\begin{center}
\begin{tabular}{p{15em}p{15em}}
   \centering
   \makeatletter{}\begingroup
  \makeatletter
  \providecommand\color[2][]{    \GenericError{(gnuplot) \space\space\space\@spaces}{      Package color not loaded in conjunction with
      terminal option `colourtext'    }{See the gnuplot documentation for explanation.    }{Either use 'blacktext' in gnuplot or load the package
      color.sty in LaTeX.}    \renewcommand\color[2][]{}  }  \providecommand\includegraphics[2][]{    \GenericError{(gnuplot) \space\space\space\@spaces}{      Package graphicx or graphics not loaded    }{See the gnuplot documentation for explanation.    }{The gnuplot epslatex terminal needs graphicx.sty or graphics.sty.}    \renewcommand\includegraphics[2][]{}  }  \providecommand\rotatebox[2]{#2}  \@ifundefined{ifGPcolor}{    \newif\ifGPcolor
    \GPcolorfalse
  }{}  \@ifundefined{ifGPblacktext}{    \newif\ifGPblacktext
    \GPblacktexttrue
  }{}    \let\gplgaddtomacro\g@addto@macro
    \gdef\gplbacktext{}  \gdef\gplfronttext{}  \makeatother
  \ifGPblacktext
        \def\colorrgb#1{}    \def\colorgray#1{}  \else
        \ifGPcolor
      \def\colorrgb#1{\color[rgb]{#1}}      \def\colorgray#1{\color[gray]{#1}}      \expandafter\def\csname LTw\endcsname{\color{white}}      \expandafter\def\csname LTb\endcsname{\color{black}}      \expandafter\def\csname LTa\endcsname{\color{black}}      \expandafter\def\csname LT0\endcsname{\color[rgb]{1,0,0}}      \expandafter\def\csname LT1\endcsname{\color[rgb]{0,1,0}}      \expandafter\def\csname LT2\endcsname{\color[rgb]{0,0,1}}      \expandafter\def\csname LT3\endcsname{\color[rgb]{1,0,1}}      \expandafter\def\csname LT4\endcsname{\color[rgb]{0,1,1}}      \expandafter\def\csname LT5\endcsname{\color[rgb]{1,1,0}}      \expandafter\def\csname LT6\endcsname{\color[rgb]{0,0,0}}      \expandafter\def\csname LT7\endcsname{\color[rgb]{1,0.3,0}}      \expandafter\def\csname LT8\endcsname{\color[rgb]{0.5,0.5,0.5}}    \else
            \def\colorrgb#1{\color{black}}      \def\colorgray#1{\color[gray]{#1}}      \expandafter\def\csname LTw\endcsname{\color{white}}      \expandafter\def\csname LTb\endcsname{\color{black}}      \expandafter\def\csname LTa\endcsname{\color{black}}      \expandafter\def\csname LT0\endcsname{\color{black}}      \expandafter\def\csname LT1\endcsname{\color{black}}      \expandafter\def\csname LT2\endcsname{\color{black}}      \expandafter\def\csname LT3\endcsname{\color{black}}      \expandafter\def\csname LT4\endcsname{\color{black}}      \expandafter\def\csname LT5\endcsname{\color{black}}      \expandafter\def\csname LT6\endcsname{\color{black}}      \expandafter\def\csname LT7\endcsname{\color{black}}      \expandafter\def\csname LT8\endcsname{\color{black}}    \fi
  \fi
  \setlength{\unitlength}{0.0500bp}  \begin{picture}(3600.00,3528.00)    \gplgaddtomacro\gplbacktext{    }    \gplgaddtomacro\gplfronttext{      \csname LTb\endcsname      \put(2135,2804){\makebox(0,0)[r]{\strut{}shape of $u_0$}}      \csname LTb\endcsname      \put(507,1092){\makebox(0,0){\strut{} 0}}      \put(1697,557){\makebox(0,0){\strut{} 25}}      \put(908,819){\makebox(0,0){\strut{}$t$}}      \put(1893,615){\makebox(0,0){\strut{} 0}}      \put(3083,1150){\makebox(0,0){\strut{} 1}}      \put(2692,819){\makebox(0,0){\strut{}$x$}}      \put(484,1656){\makebox(0,0)[r]{\strut{} 0}}      \put(484,2004){\makebox(0,0)[r]{\strut{} 100}}      \put(484,2354){\makebox(0,0)[r]{\strut{} 200}}    }    \gplbacktext
    \put(0,0){\includegraphics{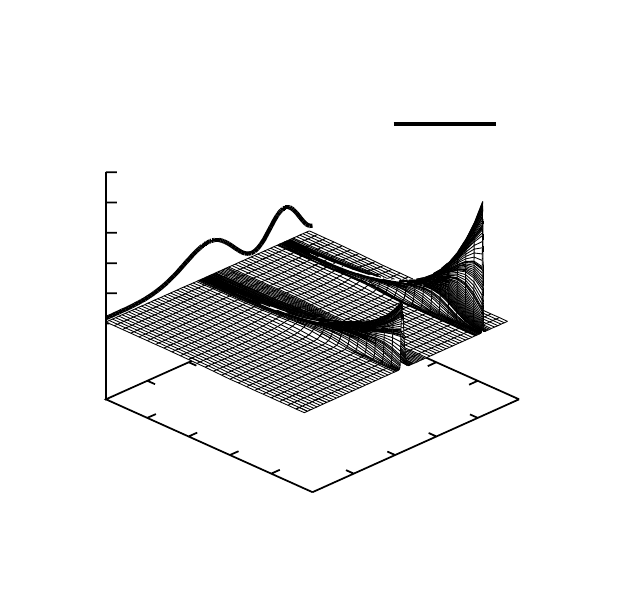}}    \gplfronttext
  \end{picture}\endgroup
 
&
   \centering
   \makeatletter{}\begingroup
  \makeatletter
  \providecommand\color[2][]{    \GenericError{(gnuplot) \space\space\space\@spaces}{      Package color not loaded in conjunction with
      terminal option `colourtext'    }{See the gnuplot documentation for explanation.    }{Either use 'blacktext' in gnuplot or load the package
      color.sty in LaTeX.}    \renewcommand\color[2][]{}  }  \providecommand\includegraphics[2][]{    \GenericError{(gnuplot) \space\space\space\@spaces}{      Package graphicx or graphics not loaded    }{See the gnuplot documentation for explanation.    }{The gnuplot epslatex terminal needs graphicx.sty or graphics.sty.}    \renewcommand\includegraphics[2][]{}  }  \providecommand\rotatebox[2]{#2}  \@ifundefined{ifGPcolor}{    \newif\ifGPcolor
    \GPcolorfalse
  }{}  \@ifundefined{ifGPblacktext}{    \newif\ifGPblacktext
    \GPblacktexttrue
  }{}    \let\gplgaddtomacro\g@addto@macro
    \gdef\gplbacktext{}  \gdef\gplfronttext{}  \makeatother
  \ifGPblacktext
        \def\colorrgb#1{}    \def\colorgray#1{}  \else
        \ifGPcolor
      \def\colorrgb#1{\color[rgb]{#1}}      \def\colorgray#1{\color[gray]{#1}}      \expandafter\def\csname LTw\endcsname{\color{white}}      \expandafter\def\csname LTb\endcsname{\color{black}}      \expandafter\def\csname LTa\endcsname{\color{black}}      \expandafter\def\csname LT0\endcsname{\color[rgb]{1,0,0}}      \expandafter\def\csname LT1\endcsname{\color[rgb]{0,1,0}}      \expandafter\def\csname LT2\endcsname{\color[rgb]{0,0,1}}      \expandafter\def\csname LT3\endcsname{\color[rgb]{1,0,1}}      \expandafter\def\csname LT4\endcsname{\color[rgb]{0,1,1}}      \expandafter\def\csname LT5\endcsname{\color[rgb]{1,1,0}}      \expandafter\def\csname LT6\endcsname{\color[rgb]{0,0,0}}      \expandafter\def\csname LT7\endcsname{\color[rgb]{1,0.3,0}}      \expandafter\def\csname LT8\endcsname{\color[rgb]{0.5,0.5,0.5}}    \else
            \def\colorrgb#1{\color{black}}      \def\colorgray#1{\color[gray]{#1}}      \expandafter\def\csname LTw\endcsname{\color{white}}      \expandafter\def\csname LTb\endcsname{\color{black}}      \expandafter\def\csname LTa\endcsname{\color{black}}      \expandafter\def\csname LT0\endcsname{\color{black}}      \expandafter\def\csname LT1\endcsname{\color{black}}      \expandafter\def\csname LT2\endcsname{\color{black}}      \expandafter\def\csname LT3\endcsname{\color{black}}      \expandafter\def\csname LT4\endcsname{\color{black}}      \expandafter\def\csname LT5\endcsname{\color{black}}      \expandafter\def\csname LT6\endcsname{\color{black}}      \expandafter\def\csname LT7\endcsname{\color{black}}      \expandafter\def\csname LT8\endcsname{\color{black}}    \fi
  \fi
  \setlength{\unitlength}{0.0500bp}  \begin{picture}(3600.00,3528.00)    \gplgaddtomacro\gplbacktext{    }    \gplgaddtomacro\gplfronttext{      \csname LTb\endcsname      \put(507,1092){\makebox(0,0){\strut{} 0}}      \put(1697,557){\makebox(0,0){\strut{} 25}}      \put(908,819){\makebox(0,0){\strut{}$t$}}      \put(1893,615){\makebox(0,0){\strut{} 0}}      \put(3083,1150){\makebox(0,0){\strut{} 1}}      \put(2692,819){\makebox(0,0){\strut{}$x$}}      \put(484,1656){\makebox(0,0)[r]{\strut{} 0}}      \put(484,1850){\makebox(0,0)[r]{\strut{} 0.1}}      \put(484,2043){\makebox(0,0)[r]{\strut{} 0.2}}      \put(484,2237){\makebox(0,0)[r]{\strut{} 0.3}}      \put(484,2431){\makebox(0,0)[r]{\strut{} 0.4}}    }    \gplbacktext
    \put(0,0){\includegraphics{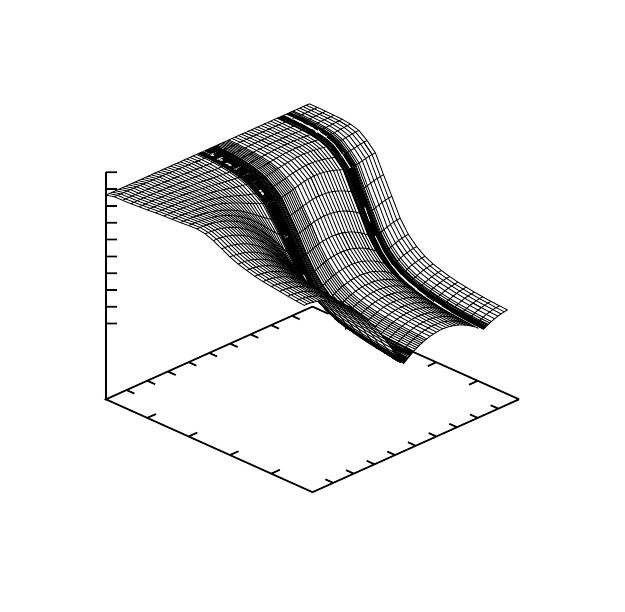}}    \gplfronttext
  \end{picture}\endgroup
 
\end{tabular}
\end{center}
\captionof{figure}{Numerical solution for initial conditions \eqref{IC:cosxx} with $\epsilon=0.05$ and diffusion coefficient $D_w=2$, $\overline{u}=\frac{3+\sqrt{5}}{2}\approx 2.62,\overline{w}=\frac{3-\sqrt{5}}{2}\approx 0.382$. Left: component $u$. Right: component $w$. We observe formation of spikes at the position of local maxima of the initial conditions and faster growth at $x=\frac{\sqrt{3}}{2}$ than at $x=\frac{1}{2}$.}
\label{fig:cosxx}
\end{minipage}\\ $\,$ \\ 
We note that the initial perturbation seems to be indeed self amplifying. This does not explain the long-time behavior, but numerical simulations indicate that spikes grow close to the maxima of the initial conditions. We also note for initial conditions \eqref{IC:cosxx} that the spike for larger $x$ grows faster. Real and imaginary part of the numerically obtained Finite Fourier Transform $\hat{f}(\omega):=\int_0^1 \cos(4 \pi x^2)e^{i \pi \omega x}dx$ and the growth rate of the perturbation at the maxima of $u_0$ are shown in Figure \ref{fig:ff-cosxx}, Figure \ref{fig:ff-cosx} shows the growth rate of perturbation \eqref{IC:cos}.
\subsection{Varying the diffusion coefficient}
The roots of the dispersion relation have the form
\begin{equation}
 \lambda_{\pm}(k^2)=\frac{\operatorname{tr}(A)-(\pi k)^2 D_w}{2} \pm \sqrt{(\frac{\operatorname{tr}(A)-(\pi k)^2 D_w}{2})^2 - |A|+(\pi k)^2 D_w a_{11}}
\end{equation}
We know that $\lambda_-(k^2) \rightarrow -\infty$ and $\lambda_+(k^2) \rightarrow a_{11}>0$ as $k \rightarrow \infty$, see Lemma \ref{lem:LA}, \ref{lem:EV}. Additionally, it holds $\lambda_+(0)<0$ since $(\overline{u}_-,\overline{w}_-)$ is a stable steady state of the kinetic system of \eqref{eq1-qs}-\eqref{eq2-qs}.\\
It follows that there exist stable eigenmodes of the Laplace Operator, because $\lambda_-(k^2)<0$ and
\begin{equation}
                \lambda_+(k^2)<0 \Leftrightarrow k^2<-\frac{|A|}{a_{11}\pi^2 D_w}.
\end{equation}
This implies dampening of the low frequency part of the initial perturbation $(\phi,\psi)$. \\
First, we choose the same initial conditions and parameters as in figure \ref{fig:1s}, but vary the diffusion coefficient $D_w$.\\
\begin{minipage}[H]{40em}
\begin{center}
\begin{tabular}{p{15em}p{15em}}
   \centering
   \makeatletter{}\begingroup
  \makeatletter
  \providecommand\color[2][]{    \GenericError{(gnuplot) \space\space\space\@spaces}{      Package color not loaded in conjunction with
      terminal option `colourtext'    }{See the gnuplot documentation for explanation.    }{Either use 'blacktext' in gnuplot or load the package
      color.sty in LaTeX.}    \renewcommand\color[2][]{}  }  \providecommand\includegraphics[2][]{    \GenericError{(gnuplot) \space\space\space\@spaces}{      Package graphicx or graphics not loaded    }{See the gnuplot documentation for explanation.    }{The gnuplot epslatex terminal needs graphicx.sty or graphics.sty.}    \renewcommand\includegraphics[2][]{}  }  \providecommand\rotatebox[2]{#2}  \@ifundefined{ifGPcolor}{    \newif\ifGPcolor
    \GPcolorfalse
  }{}  \@ifundefined{ifGPblacktext}{    \newif\ifGPblacktext
    \GPblacktexttrue
  }{}    \let\gplgaddtomacro\g@addto@macro
    \gdef\gplbacktext{}  \gdef\gplfronttext{}  \makeatother
  \ifGPblacktext
        \def\colorrgb#1{}    \def\colorgray#1{}  \else
        \ifGPcolor
      \def\colorrgb#1{\color[rgb]{#1}}      \def\colorgray#1{\color[gray]{#1}}      \expandafter\def\csname LTw\endcsname{\color{white}}      \expandafter\def\csname LTb\endcsname{\color{black}}      \expandafter\def\csname LTa\endcsname{\color{black}}      \expandafter\def\csname LT0\endcsname{\color[rgb]{1,0,0}}      \expandafter\def\csname LT1\endcsname{\color[rgb]{0,1,0}}      \expandafter\def\csname LT2\endcsname{\color[rgb]{0,0,1}}      \expandafter\def\csname LT3\endcsname{\color[rgb]{1,0,1}}      \expandafter\def\csname LT4\endcsname{\color[rgb]{0,1,1}}      \expandafter\def\csname LT5\endcsname{\color[rgb]{1,1,0}}      \expandafter\def\csname LT6\endcsname{\color[rgb]{0,0,0}}      \expandafter\def\csname LT7\endcsname{\color[rgb]{1,0.3,0}}      \expandafter\def\csname LT8\endcsname{\color[rgb]{0.5,0.5,0.5}}    \else
            \def\colorrgb#1{\color{black}}      \def\colorgray#1{\color[gray]{#1}}      \expandafter\def\csname LTw\endcsname{\color{white}}      \expandafter\def\csname LTb\endcsname{\color{black}}      \expandafter\def\csname LTa\endcsname{\color{black}}      \expandafter\def\csname LT0\endcsname{\color{black}}      \expandafter\def\csname LT1\endcsname{\color{black}}      \expandafter\def\csname LT2\endcsname{\color{black}}      \expandafter\def\csname LT3\endcsname{\color{black}}      \expandafter\def\csname LT4\endcsname{\color{black}}      \expandafter\def\csname LT5\endcsname{\color{black}}      \expandafter\def\csname LT6\endcsname{\color{black}}      \expandafter\def\csname LT7\endcsname{\color{black}}      \expandafter\def\csname LT8\endcsname{\color{black}}    \fi
  \fi
  \setlength{\unitlength}{0.0500bp}  \begin{picture}(3600.00,3528.00)    \gplgaddtomacro\gplbacktext{    }    \gplgaddtomacro\gplfronttext{      \csname LTb\endcsname      \put(2135,2804){\makebox(0,0)[r]{\strut{}shape of $u_0$}}      \csname LTb\endcsname      \put(507,1092){\makebox(0,0){\strut{} 0}}      \put(1697,557){\makebox(0,0){\strut{} 25}}      \put(908,819){\makebox(0,0){\strut{}$t$}}      \put(1893,615){\makebox(0,0){\strut{} 0}}      \put(3083,1150){\makebox(0,0){\strut{} 1}}      \put(2692,819){\makebox(0,0){\strut{}$x$}}      \put(484,1656){\makebox(0,0)[r]{\strut{} 0}}      \put(484,1946){\makebox(0,0)[r]{\strut{} 40}}      \put(484,2237){\makebox(0,0)[r]{\strut{} 80}}      \put(484,2528){\makebox(0,0)[r]{\strut{} 120}}    }    \gplbacktext
    \put(0,0){\includegraphics{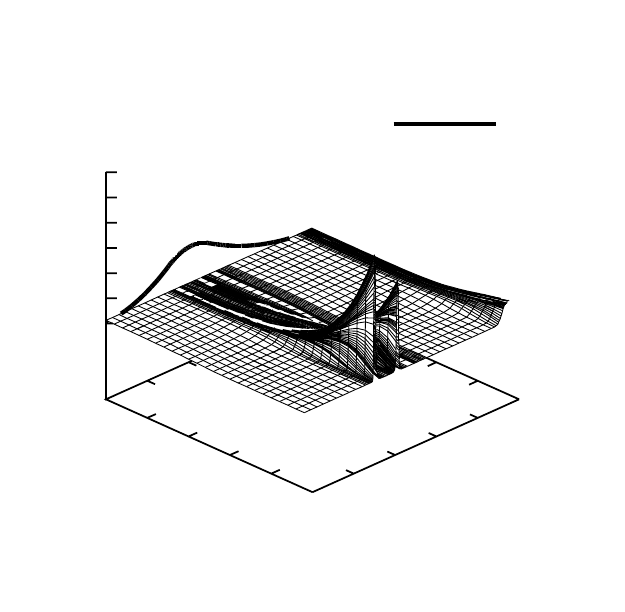}}    \gplfronttext
  \end{picture}\endgroup
 
&
   \centering
   \makeatletter{}\begingroup
  \makeatletter
  \providecommand\color[2][]{    \GenericError{(gnuplot) \space\space\space\@spaces}{      Package color not loaded in conjunction with
      terminal option `colourtext'    }{See the gnuplot documentation for explanation.    }{Either use 'blacktext' in gnuplot or load the package
      color.sty in LaTeX.}    \renewcommand\color[2][]{}  }  \providecommand\includegraphics[2][]{    \GenericError{(gnuplot) \space\space\space\@spaces}{      Package graphicx or graphics not loaded    }{See the gnuplot documentation for explanation.    }{The gnuplot epslatex terminal needs graphicx.sty or graphics.sty.}    \renewcommand\includegraphics[2][]{}  }  \providecommand\rotatebox[2]{#2}  \@ifundefined{ifGPcolor}{    \newif\ifGPcolor
    \GPcolorfalse
  }{}  \@ifundefined{ifGPblacktext}{    \newif\ifGPblacktext
    \GPblacktexttrue
  }{}    \let\gplgaddtomacro\g@addto@macro
    \gdef\gplbacktext{}  \gdef\gplfronttext{}  \makeatother
  \ifGPblacktext
        \def\colorrgb#1{}    \def\colorgray#1{}  \else
        \ifGPcolor
      \def\colorrgb#1{\color[rgb]{#1}}      \def\colorgray#1{\color[gray]{#1}}      \expandafter\def\csname LTw\endcsname{\color{white}}      \expandafter\def\csname LTb\endcsname{\color{black}}      \expandafter\def\csname LTa\endcsname{\color{black}}      \expandafter\def\csname LT0\endcsname{\color[rgb]{1,0,0}}      \expandafter\def\csname LT1\endcsname{\color[rgb]{0,1,0}}      \expandafter\def\csname LT2\endcsname{\color[rgb]{0,0,1}}      \expandafter\def\csname LT3\endcsname{\color[rgb]{1,0,1}}      \expandafter\def\csname LT4\endcsname{\color[rgb]{0,1,1}}      \expandafter\def\csname LT5\endcsname{\color[rgb]{1,1,0}}      \expandafter\def\csname LT6\endcsname{\color[rgb]{0,0,0}}      \expandafter\def\csname LT7\endcsname{\color[rgb]{1,0.3,0}}      \expandafter\def\csname LT8\endcsname{\color[rgb]{0.5,0.5,0.5}}    \else
            \def\colorrgb#1{\color{black}}      \def\colorgray#1{\color[gray]{#1}}      \expandafter\def\csname LTw\endcsname{\color{white}}      \expandafter\def\csname LTb\endcsname{\color{black}}      \expandafter\def\csname LTa\endcsname{\color{black}}      \expandafter\def\csname LT0\endcsname{\color{black}}      \expandafter\def\csname LT1\endcsname{\color{black}}      \expandafter\def\csname LT2\endcsname{\color{black}}      \expandafter\def\csname LT3\endcsname{\color{black}}      \expandafter\def\csname LT4\endcsname{\color{black}}      \expandafter\def\csname LT5\endcsname{\color{black}}      \expandafter\def\csname LT6\endcsname{\color{black}}      \expandafter\def\csname LT7\endcsname{\color{black}}      \expandafter\def\csname LT8\endcsname{\color{black}}    \fi
  \fi
  \setlength{\unitlength}{0.0500bp}  \begin{picture}(3600.00,3528.00)    \gplgaddtomacro\gplbacktext{    }    \gplgaddtomacro\gplfronttext{      \csname LTb\endcsname      \put(507,1092){\makebox(0,0){\strut{} 0}}      \put(1697,557){\makebox(0,0){\strut{} 25}}      \put(908,819){\makebox(0,0){\strut{}$t$}}      \put(1893,615){\makebox(0,0){\strut{} 0}}      \put(3083,1150){\makebox(0,0){\strut{} 1}}      \put(2692,819){\makebox(0,0){\strut{}$x$}}      \put(484,1656){\makebox(0,0)[r]{\strut{} 0}}      \put(484,1831){\makebox(0,0)[r]{\strut{} 0.1}}      \put(484,2004){\makebox(0,0)[r]{\strut{} 0.2}}      \put(484,2179){\makebox(0,0)[r]{\strut{} 0.3}}      \put(484,2354){\makebox(0,0)[r]{\strut{} 0.4}}      \put(484,2528){\makebox(0,0)[r]{\strut{} 0.5}}    }    \gplbacktext
    \put(0,0){\includegraphics{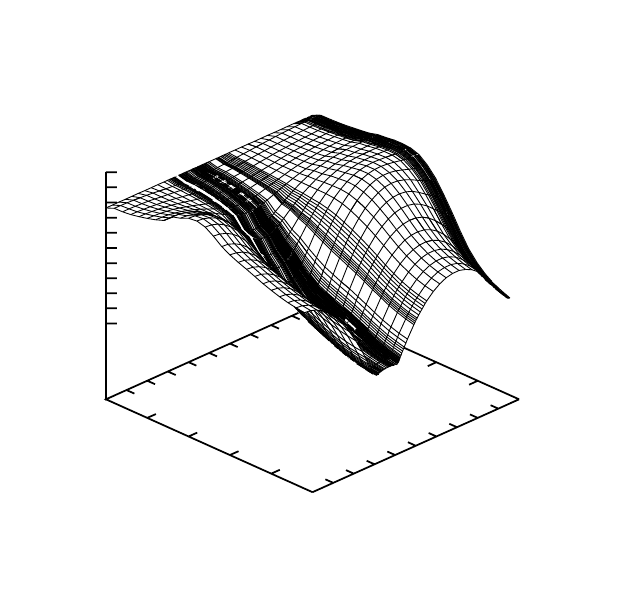}}    \gplfronttext
  \end{picture}\endgroup
 
\end{tabular}
\end{center}
\captionof{figure}{Numerical solution for initial conditions \eqref{IC:sh} with $s=0.4,\epsilon=0.1,\epsilon_2=0.05$ and diffusion coefficient $D_w=1$, $\overline{u}=\frac{3+\sqrt{5}}{2}\approx 2.62,\overline{w}=\frac{3-\sqrt{5}}{2}\approx 0.382$. Left: component $u$. Right: component $w$.}
\label{behaviour-multispike-1}
\end{minipage}\\ $\,$ \\ 
For smaller diffusion coefficient, $D_w=1$, we observe growth of multiple spikes for the same initial conditions, see Figure \ref{behaviour-multispike-1}. We observe a self-amplification of the high-frequency part of the initial perturbation. The short-time behavior is therefore similar to the idea of a 'dominant' eigenvalue in classical Turing type models. However, in this case, we can speak of a 'self amplification of the part of the initial perturbation with sufficiently high wavenumber'.\\
We define,
\begin{equation}
D_{w,k}:=\frac{|A|}{a_{11}(\pi k)^2}=\frac{1}{(\pi k)^2}\frac{-4d_1^2+(a_1-d_1)^2 \kappa^2 + \kappa (a_1-d_1) \sqrt{\kappa_1^2 (a_1-d_1)^2-4 d_1^2}}{2d_1^2}.
\end{equation}
Figure \ref{fig:Dw,1} shows the solution for the corresponding $D_{w,1}$ for model \eqref{eq1-qs}-\eqref{eq2-qs}. Table \ref{tab:D_Spi} shows the number of spikes for further variation of $D_w$ for initial conditions \eqref{IC:sh} with $s=0.4, \epsilon=0.1$, $\epsilon_1=0.05$.\\
\begin{minipage}[H]{40em}
\begin{center}
\begin{tabular}{p{15em}p{15em}}
   \centering
   \makeatletter{}\begingroup
  \makeatletter
  \providecommand\color[2][]{    \GenericError{(gnuplot) \space\space\space\@spaces}{      Package color not loaded in conjunction with
      terminal option `colourtext'    }{See the gnuplot documentation for explanation.    }{Either use 'blacktext' in gnuplot or load the package
      color.sty in LaTeX.}    \renewcommand\color[2][]{}  }  \providecommand\includegraphics[2][]{    \GenericError{(gnuplot) \space\space\space\@spaces}{      Package graphicx or graphics not loaded    }{See the gnuplot documentation for explanation.    }{The gnuplot epslatex terminal needs graphicx.sty or graphics.sty.}    \renewcommand\includegraphics[2][]{}  }  \providecommand\rotatebox[2]{#2}  \@ifundefined{ifGPcolor}{    \newif\ifGPcolor
    \GPcolorfalse
  }{}  \@ifundefined{ifGPblacktext}{    \newif\ifGPblacktext
    \GPblacktexttrue
  }{}    \let\gplgaddtomacro\g@addto@macro
    \gdef\gplbacktext{}  \gdef\gplfronttext{}  \makeatother
  \ifGPblacktext
        \def\colorrgb#1{}    \def\colorgray#1{}  \else
        \ifGPcolor
      \def\colorrgb#1{\color[rgb]{#1}}      \def\colorgray#1{\color[gray]{#1}}      \expandafter\def\csname LTw\endcsname{\color{white}}      \expandafter\def\csname LTb\endcsname{\color{black}}      \expandafter\def\csname LTa\endcsname{\color{black}}      \expandafter\def\csname LT0\endcsname{\color[rgb]{1,0,0}}      \expandafter\def\csname LT1\endcsname{\color[rgb]{0,1,0}}      \expandafter\def\csname LT2\endcsname{\color[rgb]{0,0,1}}      \expandafter\def\csname LT3\endcsname{\color[rgb]{1,0,1}}      \expandafter\def\csname LT4\endcsname{\color[rgb]{0,1,1}}      \expandafter\def\csname LT5\endcsname{\color[rgb]{1,1,0}}      \expandafter\def\csname LT6\endcsname{\color[rgb]{0,0,0}}      \expandafter\def\csname LT7\endcsname{\color[rgb]{1,0.3,0}}      \expandafter\def\csname LT8\endcsname{\color[rgb]{0.5,0.5,0.5}}    \else
            \def\colorrgb#1{\color{black}}      \def\colorgray#1{\color[gray]{#1}}      \expandafter\def\csname LTw\endcsname{\color{white}}      \expandafter\def\csname LTb\endcsname{\color{black}}      \expandafter\def\csname LTa\endcsname{\color{black}}      \expandafter\def\csname LT0\endcsname{\color{black}}      \expandafter\def\csname LT1\endcsname{\color{black}}      \expandafter\def\csname LT2\endcsname{\color{black}}      \expandafter\def\csname LT3\endcsname{\color{black}}      \expandafter\def\csname LT4\endcsname{\color{black}}      \expandafter\def\csname LT5\endcsname{\color{black}}      \expandafter\def\csname LT6\endcsname{\color{black}}      \expandafter\def\csname LT7\endcsname{\color{black}}      \expandafter\def\csname LT8\endcsname{\color{black}}    \fi
  \fi
  \setlength{\unitlength}{0.0500bp}  \begin{picture}(3600.00,3528.00)    \gplgaddtomacro\gplbacktext{    }    \gplgaddtomacro\gplfronttext{      \csname LTb\endcsname      \put(2135,2804){\makebox(0,0)[r]{\strut{}shape of $u_0$}}      \csname LTb\endcsname      \put(507,1092){\makebox(0,0){\strut{} 0}}      \put(1697,557){\makebox(0,0){\strut{} 25}}      \put(908,819){\makebox(0,0){\strut{}$t$}}      \put(1893,615){\makebox(0,0){\strut{} 0}}      \put(3083,1150){\makebox(0,0){\strut{} 1}}      \put(2692,819){\makebox(0,0){\strut{}$x$}}      \put(484,1656){\makebox(0,0)[r]{\strut{} 0}}      \put(484,2004){\makebox(0,0)[r]{\strut{} 100}}      \put(484,2354){\makebox(0,0)[r]{\strut{} 200}}    }    \gplbacktext
    \put(0,0){\includegraphics{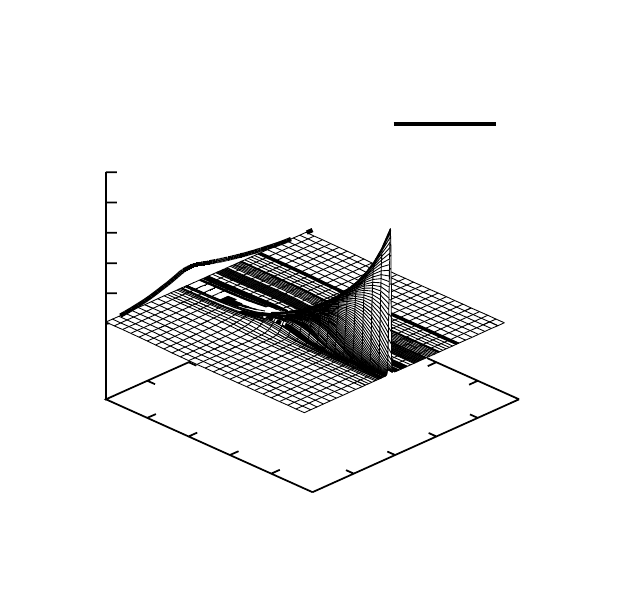}}    \gplfronttext
  \end{picture}\endgroup
 
&
   \centering
   \makeatletter{}\begingroup
  \makeatletter
  \providecommand\color[2][]{    \GenericError{(gnuplot) \space\space\space\@spaces}{      Package color not loaded in conjunction with
      terminal option `colourtext'    }{See the gnuplot documentation for explanation.    }{Either use 'blacktext' in gnuplot or load the package
      color.sty in LaTeX.}    \renewcommand\color[2][]{}  }  \providecommand\includegraphics[2][]{    \GenericError{(gnuplot) \space\space\space\@spaces}{      Package graphicx or graphics not loaded    }{See the gnuplot documentation for explanation.    }{The gnuplot epslatex terminal needs graphicx.sty or graphics.sty.}    \renewcommand\includegraphics[2][]{}  }  \providecommand\rotatebox[2]{#2}  \@ifundefined{ifGPcolor}{    \newif\ifGPcolor
    \GPcolorfalse
  }{}  \@ifundefined{ifGPblacktext}{    \newif\ifGPblacktext
    \GPblacktexttrue
  }{}    \let\gplgaddtomacro\g@addto@macro
    \gdef\gplbacktext{}  \gdef\gplfronttext{}  \makeatother
  \ifGPblacktext
        \def\colorrgb#1{}    \def\colorgray#1{}  \else
        \ifGPcolor
      \def\colorrgb#1{\color[rgb]{#1}}      \def\colorgray#1{\color[gray]{#1}}      \expandafter\def\csname LTw\endcsname{\color{white}}      \expandafter\def\csname LTb\endcsname{\color{black}}      \expandafter\def\csname LTa\endcsname{\color{black}}      \expandafter\def\csname LT0\endcsname{\color[rgb]{1,0,0}}      \expandafter\def\csname LT1\endcsname{\color[rgb]{0,1,0}}      \expandafter\def\csname LT2\endcsname{\color[rgb]{0,0,1}}      \expandafter\def\csname LT3\endcsname{\color[rgb]{1,0,1}}      \expandafter\def\csname LT4\endcsname{\color[rgb]{0,1,1}}      \expandafter\def\csname LT5\endcsname{\color[rgb]{1,1,0}}      \expandafter\def\csname LT6\endcsname{\color[rgb]{0,0,0}}      \expandafter\def\csname LT7\endcsname{\color[rgb]{1,0.3,0}}      \expandafter\def\csname LT8\endcsname{\color[rgb]{0.5,0.5,0.5}}    \else
            \def\colorrgb#1{\color{black}}      \def\colorgray#1{\color[gray]{#1}}      \expandafter\def\csname LTw\endcsname{\color{white}}      \expandafter\def\csname LTb\endcsname{\color{black}}      \expandafter\def\csname LTa\endcsname{\color{black}}      \expandafter\def\csname LT0\endcsname{\color{black}}      \expandafter\def\csname LT1\endcsname{\color{black}}      \expandafter\def\csname LT2\endcsname{\color{black}}      \expandafter\def\csname LT3\endcsname{\color{black}}      \expandafter\def\csname LT4\endcsname{\color{black}}      \expandafter\def\csname LT5\endcsname{\color{black}}      \expandafter\def\csname LT6\endcsname{\color{black}}      \expandafter\def\csname LT7\endcsname{\color{black}}      \expandafter\def\csname LT8\endcsname{\color{black}}    \fi
  \fi
  \setlength{\unitlength}{0.0500bp}  \begin{picture}(3600.00,3528.00)    \gplgaddtomacro\gplbacktext{    }    \gplgaddtomacro\gplfronttext{      \csname LTb\endcsname      \put(507,1092){\makebox(0,0){\strut{} 0}}      \put(1697,557){\makebox(0,0){\strut{} 25}}      \put(908,819){\makebox(0,0){\strut{}$t$}}      \put(1893,615){\makebox(0,0){\strut{} 0}}      \put(3083,1150){\makebox(0,0){\strut{} 1}}      \put(2692,819){\makebox(0,0){\strut{}$x$}}      \put(484,1656){\makebox(0,0)[r]{\strut{} 0}}      \put(484,1874){\makebox(0,0)[r]{\strut{} 0.1}}      \put(484,2092){\makebox(0,0)[r]{\strut{} 0.2}}      \put(484,2310){\makebox(0,0)[r]{\strut{} 0.3}}      \put(484,2528){\makebox(0,0)[r]{\strut{} 0.4}}    }    \gplbacktext
    \put(0,0){\includegraphics{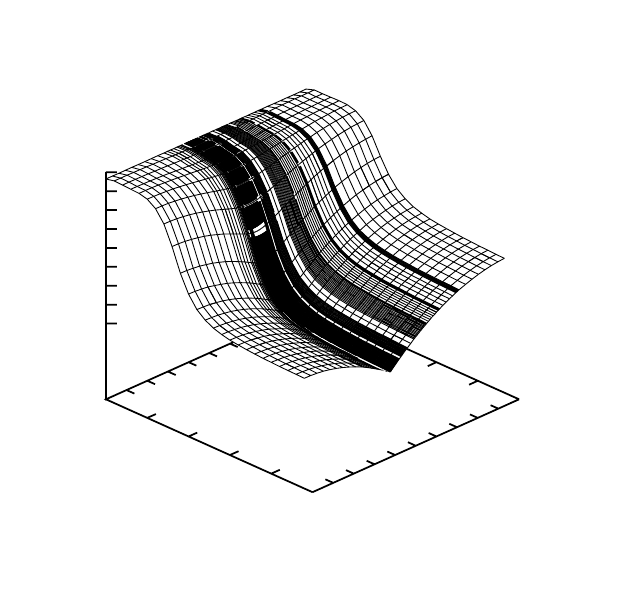}}    \gplfronttext
  \end{picture}\endgroup
 
\end{tabular}
\end{center}
\captionof{figure}{Numerical solution for initial conditions \eqref{IC:sh} with $s=0.4,\epsilon=0.1, \epsilon_1=0.05$ and diffusion coefficient $D_w=5.8541\approx D_{w,1}$, $\overline{u}=\frac{3+\sqrt{5}}{2}\approx 2.62,\overline{w}=\frac{3-\sqrt{5}}{2}\approx 0.382$. Left: component $u$. Right: component $w$.}
\label{fig:Dw,1}
\end{minipage}\\ $\,$ \\ 
\subsection{Evolution of mass}
Our simulations indicate a growth of one or multiple spikes, $u(x) \rightarrow \infty$ for some $x$ as $t \rightarrow \infty$, and decay in all other $x$. We therefore investigate the evolution of the $L^1$-norm of $u$. Figure \ref{fig:m-1s} shows the evolution of mass of the solution shown in figure \ref{fig:1s} for homogeneous spatial mesh size $h=2^{-16}$ and homogeneous temporal mesh size $k=2.5\cdot 10^{-4}$. The convergence order is shown in the appendix, see Figure \ref{fig:L2-sisp}-\ref{fig:L1-sisp}.
Lemma \ref{thm:mass} states that the the mass of the solution, $\left\|u(t)\right\|_{L^1}$ is uniformly bounded. However, an important question when modeling natural phenomena is positivity of mass if there is no extinction. The numerical simulations of the evolution of the mass suggests that it stays strictly positive. Therefore, based on the numerical simulations we have a conjecture that the solutions converge asymptotically to the sum of Diracs. This hypothesis supported by numerical simulations needs however a proof.\\

\begin{minipage}[H]{40em}
\begin{center}
\begin{tabular}{p{15em}p{15em}}
   \centering
   \makeatletter{}\begingroup
  \makeatletter
  \providecommand\color[2][]{    \GenericError{(gnuplot) \space\space\space\@spaces}{      Package color not loaded in conjunction with
      terminal option `colourtext'    }{See the gnuplot documentation for explanation.    }{Either use 'blacktext' in gnuplot or load the package
      color.sty in LaTeX.}    \renewcommand\color[2][]{}  }  \providecommand\includegraphics[2][]{    \GenericError{(gnuplot) \space\space\space\@spaces}{      Package graphicx or graphics not loaded    }{See the gnuplot documentation for explanation.    }{The gnuplot epslatex terminal needs graphicx.sty or graphics.sty.}    \renewcommand\includegraphics[2][]{}  }  \providecommand\rotatebox[2]{#2}  \@ifundefined{ifGPcolor}{    \newif\ifGPcolor
    \GPcolorfalse
  }{}  \@ifundefined{ifGPblacktext}{    \newif\ifGPblacktext
    \GPblacktexttrue
  }{}    \let\gplgaddtomacro\g@addto@macro
    \gdef\gplbacktext{}  \gdef\gplfronttext{}  \makeatother
  \ifGPblacktext
        \def\colorrgb#1{}    \def\colorgray#1{}  \else
        \ifGPcolor
      \def\colorrgb#1{\color[rgb]{#1}}      \def\colorgray#1{\color[gray]{#1}}      \expandafter\def\csname LTw\endcsname{\color{white}}      \expandafter\def\csname LTb\endcsname{\color{black}}      \expandafter\def\csname LTa\endcsname{\color{black}}      \expandafter\def\csname LT0\endcsname{\color[rgb]{1,0,0}}      \expandafter\def\csname LT1\endcsname{\color[rgb]{0,1,0}}      \expandafter\def\csname LT2\endcsname{\color[rgb]{0,0,1}}      \expandafter\def\csname LT3\endcsname{\color[rgb]{1,0,1}}      \expandafter\def\csname LT4\endcsname{\color[rgb]{0,1,1}}      \expandafter\def\csname LT5\endcsname{\color[rgb]{1,1,0}}      \expandafter\def\csname LT6\endcsname{\color[rgb]{0,0,0}}      \expandafter\def\csname LT7\endcsname{\color[rgb]{1,0.3,0}}      \expandafter\def\csname LT8\endcsname{\color[rgb]{0.5,0.5,0.5}}    \else
            \def\colorrgb#1{\color{black}}      \def\colorgray#1{\color[gray]{#1}}      \expandafter\def\csname LTw\endcsname{\color{white}}      \expandafter\def\csname LTb\endcsname{\color{black}}      \expandafter\def\csname LTa\endcsname{\color{black}}      \expandafter\def\csname LT0\endcsname{\color{black}}      \expandafter\def\csname LT1\endcsname{\color{black}}      \expandafter\def\csname LT2\endcsname{\color{black}}      \expandafter\def\csname LT3\endcsname{\color{black}}      \expandafter\def\csname LT4\endcsname{\color{black}}      \expandafter\def\csname LT5\endcsname{\color{black}}      \expandafter\def\csname LT6\endcsname{\color{black}}      \expandafter\def\csname LT7\endcsname{\color{black}}      \expandafter\def\csname LT8\endcsname{\color{black}}    \fi
  \fi
  \setlength{\unitlength}{0.0500bp}  \begin{picture}(3600.00,2520.00)    \gplgaddtomacro\gplbacktext{      \csname LTb\endcsname      \put(858,704){\makebox(0,0)[r]{\strut{} 2.35}}      \csname LTb\endcsname      \put(858,898){\makebox(0,0)[r]{\strut{} 2.4}}      \csname LTb\endcsname      \put(858,1092){\makebox(0,0)[r]{\strut{} 2.45}}      \csname LTb\endcsname      \put(858,1286){\makebox(0,0)[r]{\strut{} 2.5}}      \csname LTb\endcsname      \put(858,1480){\makebox(0,0)[r]{\strut{} 2.55}}      \csname LTb\endcsname      \put(858,1674){\makebox(0,0)[r]{\strut{} 2.6}}      \csname LTb\endcsname      \put(858,1868){\makebox(0,0)[r]{\strut{} 2.65}}      \csname LTb\endcsname      \put(858,2062){\makebox(0,0)[r]{\strut{} 2.7}}      \csname LTb\endcsname      \put(858,2256){\makebox(0,0)[r]{\strut{} 2.75}}      \csname LTb\endcsname      \put(990,484){\makebox(0,0){\strut{} 0}}      \csname LTb\endcsname      \put(1359,484){\makebox(0,0){\strut{} 5}}      \csname LTb\endcsname      \put(1728,484){\makebox(0,0){\strut{} 10}}      \csname LTb\endcsname      \put(2097,484){\makebox(0,0){\strut{} 15}}      \csname LTb\endcsname      \put(2465,484){\makebox(0,0){\strut{} 20}}      \csname LTb\endcsname      \put(2834,484){\makebox(0,0){\strut{} 25}}      \csname LTb\endcsname      \put(3203,484){\makebox(0,0){\strut{} 30}}      \put(2096,154){\makebox(0,0){\strut{}$t$}}    }    \gplgaddtomacro\gplfronttext{      \csname LTb\endcsname      \put(2216,2083){\makebox(0,0)[r]{\strut{}$\left\|u(t)\right\|_{L^1}$}}    }    \gplbacktext
    \put(0,0){\includegraphics{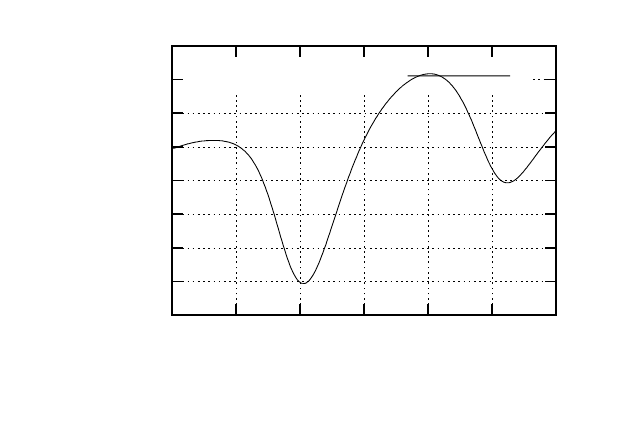}}    \gplfronttext
  \end{picture}\endgroup
 
&
   \centering
   \makeatletter{}\begingroup
  \makeatletter
  \providecommand\color[2][]{    \GenericError{(gnuplot) \space\space\space\@spaces}{      Package color not loaded in conjunction with
      terminal option `colourtext'    }{See the gnuplot documentation for explanation.    }{Either use 'blacktext' in gnuplot or load the package
      color.sty in LaTeX.}    \renewcommand\color[2][]{}  }  \providecommand\includegraphics[2][]{    \GenericError{(gnuplot) \space\space\space\@spaces}{      Package graphicx or graphics not loaded    }{See the gnuplot documentation for explanation.    }{The gnuplot epslatex terminal needs graphicx.sty or graphics.sty.}    \renewcommand\includegraphics[2][]{}  }  \providecommand\rotatebox[2]{#2}  \@ifundefined{ifGPcolor}{    \newif\ifGPcolor
    \GPcolorfalse
  }{}  \@ifundefined{ifGPblacktext}{    \newif\ifGPblacktext
    \GPblacktexttrue
  }{}    \let\gplgaddtomacro\g@addto@macro
    \gdef\gplbacktext{}  \gdef\gplfronttext{}  \makeatother
  \ifGPblacktext
        \def\colorrgb#1{}    \def\colorgray#1{}  \else
        \ifGPcolor
      \def\colorrgb#1{\color[rgb]{#1}}      \def\colorgray#1{\color[gray]{#1}}      \expandafter\def\csname LTw\endcsname{\color{white}}      \expandafter\def\csname LTb\endcsname{\color{black}}      \expandafter\def\csname LTa\endcsname{\color{black}}      \expandafter\def\csname LT0\endcsname{\color[rgb]{1,0,0}}      \expandafter\def\csname LT1\endcsname{\color[rgb]{0,1,0}}      \expandafter\def\csname LT2\endcsname{\color[rgb]{0,0,1}}      \expandafter\def\csname LT3\endcsname{\color[rgb]{1,0,1}}      \expandafter\def\csname LT4\endcsname{\color[rgb]{0,1,1}}      \expandafter\def\csname LT5\endcsname{\color[rgb]{1,1,0}}      \expandafter\def\csname LT6\endcsname{\color[rgb]{0,0,0}}      \expandafter\def\csname LT7\endcsname{\color[rgb]{1,0.3,0}}      \expandafter\def\csname LT8\endcsname{\color[rgb]{0.5,0.5,0.5}}    \else
            \def\colorrgb#1{\color{black}}      \def\colorgray#1{\color[gray]{#1}}      \expandafter\def\csname LTw\endcsname{\color{white}}      \expandafter\def\csname LTb\endcsname{\color{black}}      \expandafter\def\csname LTa\endcsname{\color{black}}      \expandafter\def\csname LT0\endcsname{\color{black}}      \expandafter\def\csname LT1\endcsname{\color{black}}      \expandafter\def\csname LT2\endcsname{\color{black}}      \expandafter\def\csname LT3\endcsname{\color{black}}      \expandafter\def\csname LT4\endcsname{\color{black}}      \expandafter\def\csname LT5\endcsname{\color{black}}      \expandafter\def\csname LT6\endcsname{\color{black}}      \expandafter\def\csname LT7\endcsname{\color{black}}      \expandafter\def\csname LT8\endcsname{\color{black}}    \fi
  \fi
  \setlength{\unitlength}{0.0500bp}  \begin{picture}(3600.00,2520.00)    \gplgaddtomacro\gplbacktext{      \csname LTb\endcsname      \put(858,704){\makebox(0,0)[r]{\strut{} 0.1}}      \csname LTb\endcsname      \put(858,963){\makebox(0,0)[r]{\strut{} 0.15}}      \csname LTb\endcsname      \put(858,1221){\makebox(0,0)[r]{\strut{} 0.2}}      \csname LTb\endcsname      \put(858,1480){\makebox(0,0)[r]{\strut{} 0.25}}      \csname LTb\endcsname      \put(858,1739){\makebox(0,0)[r]{\strut{} 0.3}}      \csname LTb\endcsname      \put(858,1997){\makebox(0,0)[r]{\strut{} 0.35}}      \csname LTb\endcsname      \put(858,2256){\makebox(0,0)[r]{\strut{} 0.4}}      \csname LTb\endcsname      \put(990,484){\makebox(0,0){\strut{} 0}}      \csname LTb\endcsname      \put(1433,484){\makebox(0,0){\strut{} 5}}      \csname LTb\endcsname      \put(1875,484){\makebox(0,0){\strut{} 10}}      \csname LTb\endcsname      \put(2318,484){\makebox(0,0){\strut{} 15}}      \csname LTb\endcsname      \put(2760,484){\makebox(0,0){\strut{} 20}}      \csname LTb\endcsname      \put(3203,484){\makebox(0,0){\strut{} 25}}      \put(2096,154){\makebox(0,0){\strut{}$t$}}    }    \gplgaddtomacro\gplfronttext{      \csname LTb\endcsname      \put(2216,2083){\makebox(0,0)[r]{\strut{}$\left\|w(t)\right\|_{L^1}$}}    }    \gplbacktext
    \put(0,0){\includegraphics{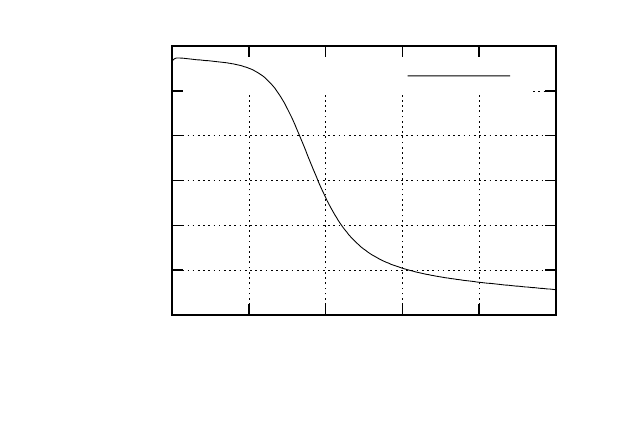}}    \gplfronttext
  \end{picture}\endgroup
 
\end{tabular}
\end{center}
\captionof{figure}{Evolution of the $L^1$-norm of the solution shown in figure \ref{fig:1s}. Left: component $u$. Right: component $w$.}
\label{fig:m-1s}
\end{minipage}\\ $\,$ \\ 

\section{Acknowledgments}

This work was supported by European Research Council Starting Grant No 210680 ``Multiscale mathematical modelling of dynamics of structure formation in cell systems'' and Emmy Noether Programme of German Research Council (DFG).  Steffen H\"arting was partly supported by Ev. Studienwerk Villigst e.V..

\section{Appendix}
\subsection{Derivation of the model}
Assume that component $v$ in model \eqref{eq1}-\eqref{eq3} satisfies the steady state equation
\begin{equation}
 0=\alpha u^2 w + dv - d_b v.
\end{equation}
Solving for $v$ yields
\begin{equation}
 v=\frac{\alpha}{d_b+d} u^2 w. \label{red_v}
\end{equation}
Substituting \eqref{red_v} into \eqref{eq1} and $\eqref{eq3}$ yields
\begin{align}
u_t&=\Big(a\frac{uw}{\sigma + uw} -d_c\Big) u,                             &\text{for}\ x\in [0,1], \; t>0, \label{eq1-qs-dipl}\\
w_t &= \frac{1}{\gamma} w_{xx} - d_g w - \sigma^{-1} d_b u^2 w +\kappa, \; &\text{for}\ x\in (0,1), \; t>0, \label{eq2-qs-dipl}
\end{align}
for $\sigma:=\frac{d_b+d}{\alpha}$.
After rescaling time, $\hat{t}:=d_g t$ yields
\begin{align}
u_{\hat{t}}&=\Big(\frac{a}{d_g}\frac{uw}{\sigma + uw} -\frac{d_c}{d_g}\Big) u,                            & \text{for}\ x\in [0,1], \; \hat{t}>0, \noindent\\
w_{\hat{t}}&= \frac{1}{\gamma d_g} w_{xx} - w - \sigma^{-1} \frac{d_b}{d_g} u^2 w +\frac{\kappa}{d_g}. \; & \text{for}\ x\in (0,1). \; \hat{t}>0 \noindent
\end{align}
Defining $\hat{u}(x,t):=\sqrt{\frac{d_b}{\sigma d_g}} u(x,t)$ and $\hat{w}(x,t):=\sqrt{\frac{d_g}{d_b \sigma}} w(x,t)$, we obtain system \eqref{eq1-qs}-\eqref{eq2-qs}:
\begin{align}
\hat{u}_{\hat{t}}&=\Big(\frac{a}{d_g}\frac{\sigma \hat{u}\hat{w}}{\sigma + \sigma \hat{u}\hat{w}} -\frac{d_c}{d_g}\Big) \hat{u}         &\text{for}\ x\in [0,1], \; \hat{t}>0, \noindent\\
\hat{w}_{\hat{t}}&= \frac{1}{\gamma d_g} \hat{w}_{xx} - \hat{w} - \hat{u}^2 \hat{w} +\frac{\kappa}{\sqrt{d_g d_b \sigma}}        \qquad &\text{for}\ x\in (0,1), \; \hat{t}>0. \noindent
\end{align}
\subsection{Proofs of analytical statements}
\begin{proof}[Proof of Theorem \ref{thm:C2}]
Since $v=\frac{\alpha}{d_b+d} u^2 w$ is the unique root of the right-hand side of \eqref{eq2},
there exists a one-to-one mapping from the set of steady states of \eqref{eq1-qs-dipl}-\eqref{eq2-qs-dipl} into the set of steady states of \eqref{eq1}-\eqref{eq3} by
$(u,w) \rightarrow (u,\frac{\alpha}{d_b+d}u^2 w, w)$.\\
Since model \eqref{eq1-qs}-\eqref{eq2-qs} is a linear rescaling resp. linear substitution of \eqref{eq1-qs-dipl}-\eqref{eq2-qs-dipl}, there exists also a one-to-one mapping between the sets of steady states.\\
\cite{MCKS12}, Theorem 2.6 proves Theorem \ref{thm:C2} for system \eqref{eq1}-\eqref{eq3}. Since we found a one-to-one mapping between the sets of steady states, statements (ii) and (iii) and existence of the steady states in (i) follow from \cite{MCKS12}, Theorem 2.6.\\
It is left to calculate the exact values of the spatially homogeneous steady states.\\
The right-hand-side of \eqref{eq1-qs} has two roots:
\begin{align}
\overline{u}_0 &= 0, \label{P:u=0} \\
\overline{u}_1 &=\frac{d_1}{a_1-d_1} \frac{1}{w}. \label{P:u!=0}
\end{align}
Substituting \ref{P:u=0} into the right-hand side of \eqref{eq2-qs} and setting it equal to zero leads to
\begin{equation}
0 = -w+\kappa_1,\\
\end{equation}
defining $(\overline{u}_0,\overline{w}_0)=(0,\kappa_1)$.\\
Substituting \ref{P:u!=0} into the right-hand side of \eqref{eq2-qs} and setting it equal to zero leads to
\begin{equation}
0 = -w-(\frac{d_1}{a_1-d_1})^2 \frac{1}{w}+\kappa_1,
\end{equation}
with roots $\overline{w}_-$ and $\overline{w}_+$. 
\end{proof}

To prove \ref{thm:DDI}, we use the following lemma from linear algebra, proved in \cite{Baker}, section 2.1.2:
\begin{lemma}\label{lem:LA}
Let a real-valued block-matrix 
\begin{equation}\label{LA-matrix}
A=
\begin{bmatrix}
 A_{11} & A_{12} \\
 A_{21} & A_{22}-D k^2
\end{bmatrix},
\end{equation}
be given with $D=\text{diag}(d_1,...,d_{m})$,$d_i>0$. \\
Let $\lambda_1,...,\lambda_n$ denote the eigenvalues of $A_{12}$ and $\hat{\lambda}_1,...,\hat{\lambda}_{m+n}$ the eigenvalues of $A$.\\
Then there exists an injective mapping $j:\{1,...,n\}\rightarrow \{1,...,n+m\}$, s.t. for all $1 \leq i \leq n$ holds
\begin{equation}
 \lim_{k \rightarrow \infty}\hat{\lambda}_{j(i)} = \lambda_i,
\end{equation}
and the real parts of all other eigenvalues of $A$ converge towards $-\infty$ as $k \rightarrow \infty$.
\end{lemma}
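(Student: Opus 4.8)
The plan is to split the $m+n$ eigenvalues of $A=A^{(k)}$ into the $n$ that stay bounded as $k\to\infty$ and the $m$ that escape, and to treat the two groups separately: a resolvent/winding-number argument for the bounded ones, and an energy estimate on eigenvectors for the escaping ones. Write $d_{\min}:=\min_i d_i>0$ and, for $\mu\in\C$, $M_{22}^{(k)}(\mu):=\mu I_m-A_{22}+Dk^2$. The first point I would record is that for every bounded set $K\subset\C$ there is $k_0$ such that, for $k\ge k_0$ and $\mu\in K$, the block $M_{22}^{(k)}(\mu)$ is invertible with
\begin{equation*}
\big\|M_{22}^{(k)}(\mu)^{-1}\big\|\le\Big(d_{\min}k^2-\|A_{22}\|-\sup_{\mu\in K}|\mu|\Big)^{-1}\longrightarrow 0\quad(k\to\infty);
\end{equation*}
consequently the Schur complement $S^{(k)}(\mu):=(\mu I_n-A_{11})-A_{12}M_{22}^{(k)}(\mu)^{-1}A_{21}$ converges to $\mu I_n-A_{11}$ uniformly on $K$, and is invertible for $k$ large whenever $\mu\notin\sigma(A_{11})$. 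Via the block factorization of $\mu I-A^{(k)}$ this already shows that $A^{(k)}$ has no eigenvalue in a fixed compact set disjoint from $\sigma(A_{11})$ once $k$ is large, so the eigenvalues of $A^{(k)}$ cannot accumulate outside $\sigma(A_{11})\cup\{\infty\}$.

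For the bounded eigenvalues I would use the argument principle. Fix a simple closed contour $\Gamma$ avoiding $\sigma(A_{11})$; by the previous remark no eigenvalue of $A^{(k)}$ lies on $\Gamma$ for $k$ large. Applying the block inversion formula with Schur complement $S^{(k)}(\mu)$ relative to the lower-right block, one sees that each off-diagonal and lower-right block of $(\mu I-A^{(k)})^{-1}$ carries a factor $M_{22}^{(k)}(\mu)^{-1}$, so
\begin{equation*}
(\mu I-A^{(k)})^{-1}\longrightarrow\begin{bmatrix}(\mu I_n-A_{11})^{-1}&0\\0&0\end{bmatrix}\qquad\text{uniformly for }\mu\in\Gamma.
\end{equation*}
Taking traces and using $\operatorname{tr}(\mu I-A^{(k)})^{-1}=\tfrac{d}{d\mu}\log\det(\mu I-A^{(k)})$ gives
\begin{equation*}
\#\{\text{eigenvalues of }A^{(k)}\text{ inside }\Gamma\}=\frac{1}{2\pi i}\oint_\Gamma\operatorname{tr}(\mu I-A^{(k)})^{-1}\,d\mu\;\longrightarrow\;\#\{\text{eigenvalues of }A_{11}\text{ inside }\Gamma\},
\end{equation*}
and since both sides are integers they agree for $k$ large. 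Choosing $\Gamma$ to be a union of small disjoint circles around the distinct eigenvalues of $A_{11}$ then shows that, for every small $\delta>0$ and all large $k$, exactly $n$ eigenvalues of $A^{(k)}$ (with algebraic multiplicity) lie within $\delta$ of $\sigma(A_{11})$, distributed according to the multiplicities there; a pigeonhole/matching argument over these discs then produces the injective labelling $j$ with $\hat\lambda_{j(i)}\to\lambda_i$. Running the same count with $\Gamma=\{|\mu|=R\}$ for $R$ exceeding the spectral radius of $A_{11}$ shows that the remaining $m$ eigenvalues satisfy $|\hat\lambda|\ge R$ for $k$ large, hence $|\hat\lambda|\to\infty$ for each of them.

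It remains to upgrade $|\hat\lambda|\to\infty$ to $\Re\hat\lambda\to-\infty$ for these $m$ eigenvalues, and here positivity of $D$ is essential. Let $(\mu_k)$ be a sequence with $\mu_k$ one of the $m$ escaping eigenvalues of $A^{(k)}$, and let $(v_1,v_2)\in\C^n\times\C^m$ be a corresponding unit eigenvector. The upper block row $A_{11}v_1+A_{12}v_2=\mu_k v_1$ gives $|\mu_k|\,\|v_1\|\le\|A_{11}\|+\|A_{12}\|$, so $\|v_1\|\to0$ and $\|v_2\|\to1$. Pairing the lower block row $A_{21}v_1+A_{22}v_2-Dk^2v_2=\mu_k v_2$ with $v_2$ in the Hermitian inner product, taking real parts, and using $\langle Dv_2,v_2\rangle\ge d_{\min}\|v_2\|^2$,
\begin{equation*}
\Re\mu_k\,\|v_2\|^2=\Re\langle A_{21}v_1+A_{22}v_2,\,v_2\rangle-k^2\langle Dv_2,v_2\rangle\le\|A_{21}\|+\|A_{22}\|-d_{\min}k^2\|v_2\|^2,
\end{equation*}
whence $\Re\mu_k\le(\|A_{21}\|+\|A_{22}\|)/\|v_2\|^2-d_{\min}k^2\to-\infty$, as required.

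I expect the main obstacle to be the winding-number step: one must verify that the block inversion formula for the resolvent is valid uniformly in $k$ on $\Gamma$ — which is exactly what the uniform invertibility of $M_{22}^{(k)}$ and of $S^{(k)}$ on $\Gamma$ provides — and that $\Gamma$ can be chosen to miss $\sigma(A^{(k)})$ for all relevant large $k$, which follows from the non-accumulation of eigenvalues established in the first step. Once the count $\frac{1}{2\pi i}\oint_\Gamma\operatorname{tr}(\mu I-A^{(k)})^{-1}\,d\mu\to\#(\sigma(A_{11})\cap\operatorname{int}\Gamma)$ is secured, the clean $n$-versus-$m$ splitting and the subsequent energy estimate are routine bookkeeping. (In the use of this lemma in the present paper one has $m=n=1$ and $A_{11}=a_{11}>0$, so the conclusion reduces to $\lambda_+(k)\to a_{11}$ and $\lambda_-(k)\to-\infty$.)
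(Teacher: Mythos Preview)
Your proposal is correct, but note that the paper does not actually prove this lemma: it is quoted from \cite{Baker} (Klika, Baker, Headon, Gaffney), Section~2.1.2, and merely used as a black box to establish Lemma~\ref{lem:EV} and Proposition~\ref{thm:DDI}. What you have written is therefore a self-contained substitute for that citation. Your Schur-complement/argument-principle count of the $n$ bounded eigenvalues and the eigenvector energy estimate $\Re\mu_k\le C/\|v_2\|^2-d_{\min}k^2$ for the $m$ escaping ones are both sound; the latter is exactly where the hypothesis $d_i>0$ enters. Two small remarks: the statement as printed contains a typo---the $\lambda_i$ should be the eigenvalues of $A_{11}$, not of $A_{12}$ (which need not even be square), and you have correctly read it that way; and the norm bound on $M_{22}^{(k)}(\mu)^{-1}$ is most cleanly justified by factoring $M_{22}^{(k)}(\mu)=Dk^2\bigl(I+(Dk^2)^{-1}(\mu I-A_{22})\bigr)$ and invoking a Neumann series, since $D$ and $A_{22}$ need not commute.
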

Lemma \ref{lem:LA}, applied to stability of spatially homogeneous steady states of ordinary differential equations coupled to reaction-diffusion equations reads:
\begin{lemma}\label{lem:EV}
Given a system of ordinary/partial-differential equations:
\begin{equation}\label{ODE_RD_sys_general}
\begin{aligned}
\frac{d}{dt}u_{i}&=f_i(u),                     &1\leq i \leq n,\\
\frac{d}{dt}u_{i}&=d_i \Delta u_i + f_i(u), \; &n<i\leq n+m.
\end{aligned}
\end{equation}
Let $\overline{u}$ denote a constant steady state of system \eqref{ODE_RD_sys_general} and $J^O$ denote the Jacobian of the \textbf{ODE subsystem} at $\overline{u}$:
\begin{equation}
 J_{ij}^O=\frac{d}{du_i}f_j(u)|_{u=\overline{u}}, \qquad 1\leq i \leq n.
\end{equation}
If $J^O$ has a positive eigenvalue $\lambda_+$, the operator resulting from a linearization of \eqref{ODE_RD_sys_general} around $\overline{u}$ has infinitely many positive eigenvalues.
\end{lemma}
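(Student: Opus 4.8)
The plan is to reduce the spectral problem for the (unbounded) linearized operator $\mathcal{L}$ to a family of finite-dimensional eigenvalue problems, one for each eigenmode of the Neumann Laplacian, and then to apply Lemma~\ref{lem:LA} to each of the resulting matrices. Concretely, I would regard $\mathcal{L}$ on the product space $L^{2}(\Omega)^{n+m}$ (with $\Omega$ the bounded spatial domain; here $\Omega=(0,1)$), writing the linearization of \eqref{ODE_RD_sys_general} at $\overline u$ as $\mathcal{L}=J+\widetilde D\,\Delta$, where $J$ is the full Jacobian of the kinetics at $\overline u$ and $\widetilde D=\operatorname{diag}(0,\dots,0,d_{n+1},\dots,d_{n+m})$ carries zeros in the $n$ non-diffusing slots. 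Let $\{\phi_{k}\}_{k\ge 0}$ be an orthonormal basis of $L^{2}(\Omega)$ consisting of Neumann eigenfunctions of $\Delta$, with $-\Delta\phi_{k}=\mu_{k}\phi_{k}$ and $0=\mu_{0}\le\mu_{1}\le\cdots\to\infty$. Since $J$ acts identically on every scalar factor and $\Delta$ is diagonal in this basis, $\mathcal{L}$ leaves each finite-dimensional subspace $V_{k}:=\{\phi_{k}v \,:\, v\in\R^{n+m}\}\cong\R^{n+m}$ invariant and acts there as the matrix
\[
A(k)\;=\;J-\mu_{k}\widetilde D\;=\;\begin{bmatrix} A_{11} & A_{12}\\ A_{21} & A_{22}-\mu_{k}D'\end{bmatrix},\qquad D'=\operatorname{diag}(d_{n+1},\dots,d_{n+m}),\quad A_{11}=J^{O}.
\]
An eigenvector $v$ of $A(k)$ with eigenvalue $\widehat\lambda$ then yields the eigenfunction $\phi_{k}(x)\,v$ of $\mathcal{L}$ with the same eigenvalue, so it is enough to show that $A(k)$ has a positive eigenvalue for infinitely many $k$.

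The second step is to feed the matrices $A(k)$ into Lemma~\ref{lem:LA}: they are exactly of the form \eqref{LA-matrix} with the scalar $k^{2}$ there replaced by the unbounded sequence $\mu_{k}$ and with upper-left block $A_{11}=J^{O}$. Lemma~\ref{lem:LA} then provides, for each eigenvalue $\nu$ of $J^{O}$, a branch of eigenvalues of $A(k)$ tending to $\nu$ as $\mu_{k}\to\infty$, while every remaining eigenvalue has real part tending to $-\infty$. Applied to the hypothesised positive eigenvalue $\lambda_{+}>0$ of $J^{O}$ this yields $\widehat\lambda^{(k)}\in\sigma(A(k))$ with $\widehat\lambda^{(k)}\to\lambda_{+}$, so $\Re\widehat\lambda^{(k)}>\lambda_{+}/2>0$ for all large $k$. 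Moreover, when $\lambda_{+}$ is a simple eigenvalue of $J^{O}$ (which holds automatically for $n=1$, e.g. for \eqref{eq1-qs}--\eqref{eq2-qs}, where $J^{O}=a_{11}$ is a scalar), then for $k$ large $\widehat\lambda^{(k)}$ is the only eigenvalue of the real matrix $A(k)$ in a fixed small disc about the real number $\lambda_{+}$; being invariant under complex conjugation it is therefore real, hence $\widehat\lambda^{(k)}>0$. Lifting these eigenpairs back via $v\mapsto\phi_{k}v$ gives infinitely many positive eigenvalues of $\mathcal{L}$, accumulating only at $\lambda_{+}$; combined with the converse inclusion $\sigma(\mathcal{L})=\overline{\bigcup_{k}\sigma(A(k))}$ this also yields the statement in Proposition~\ref{thm:DDI} that $a_{11}$ and $-\infty$ are the only limit points.

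I expect the genuine work to lie in the reduction step, not in the linear algebra, which is outsourced to Lemma~\ref{lem:LA}. One must justify that the Laplacian-eigenmode decomposition block-diagonalises the \emph{mixed} ODE/reaction--diffusion operator even though the first $n$ components are not smoothed by $\Delta$; expanding those components in $\{\phi_{k}\}$ is harmless because the kinetic coupling $J$ commutes with the decomposition, but pinning this down rigorously --- together with the precise hypotheses on $\Omega$ and the boundary conditions under which the conclusion of Lemma~\ref{lem:LA} transfers to $\mathcal{L}$ --- is the point requiring care, and is why one works on a bounded domain with a complete orthonormal eigenbasis. For the assertion actually needed, only the easy inclusion $\sigma(A(k))\subseteq\sigma_{p}(\mathcal{L})$ is used, so the ``hard part'' is really bookkeeping rather than analysis.
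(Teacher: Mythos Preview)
Your approach is essentially identical to the paper's: write the linearization in block form with $J^{O}$ in the upper-left corner, restrict to the span of a single Neumann eigenfunction $\phi_{k}$ to obtain a matrix of type \eqref{LA-matrix}, and invoke Lemma~\ref{lem:LA} to produce a branch $\lambda(k)\to\lambda_{+}$. The paper's proof is terser---it simply writes down the block eigenvalue problem, substitutes $\phi_{k}$, and cites Lemma~\ref{lem:LA}---whereas you spell out the invariant-subspace decomposition and also address the reality of the limiting eigenvalues (a point the paper glosses over, concluding only that $\Re\lambda_{+}>0$); your extra care there is justified, since the statement asserts positive, not merely positive-real-part, eigenvalues.
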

\begin{proof}
The linearization of the right-hand side of \eqref{ODE_RD_sys_general} at $u=\overline{u}$ is of type, written in matrix form:
\begin{equation}
\begin{bmatrix}
J^O    & A_{12} \\
A_{21} & A_{22}-D \Delta
\end{bmatrix},
\end{equation}
and the corresponding eigenvalue problem in matrix form:
\begin{equation}\label{ev-prob}
\begin{bmatrix}
J^O-\lambda    & A_{12} \\
A_{21} & A_{22}-\lambda-D \Delta
\end{bmatrix}
\begin{bmatrix}
\psi \\ \phi
\end{bmatrix}
= \begin{bmatrix} 0 \\ 0 \end{bmatrix}.
\end{equation}
Assuming $\phi_k$ being the eigenfunction of the Laplace operator associated to the $k$th eigenvalue, the matrix is of type \eqref{LA-matrix}. It follows that there exists a sequence of solutions $(\lambda(k),\phi_k)$ of the eigenvalue problem \eqref{ev-prob} with $\lim_{k \rightarrow \infty} \lambda(k)=\lambda_+$, where $\text{Re}(\lambda_+)>0$. 
\end{proof}
Now, we can prove Lemma \ref{thm:DDI}:
\begin{proof}[Proof of Lemma \ref{thm:DDI}]
The Jacobian of the kinetic system of \eqref{eq1-qs}-\eqref{eq2-qs} at $(\overline{u}_0,\overline{w_0})=(0,\kappa_1)$ reads:
\begin{equation}
J=
\begin{bmatrix}
-d_1 & 0 \\
0 & -1
\end{bmatrix}.
\end{equation}
It follows that $(0,\kappa_1)$ is stable solution of \eqref{eq1-qs}-\eqref{eq2-qs} and its kinetic system.\\
The Jacobian of the kinetic system of \eqref{eq1-qs}-\eqref{eq2-qs} at $(\frac{d_1}{a_1-d_1}\frac{1}{w},w)$ reads
\begin{equation}\label{J:w+-}
J=
\begin{bmatrix}
\frac{(a_1-d_1)d_1}{a_1} & \frac{d_1^2}{a} \frac{1}{w^2} \\
-\frac{2 d_1}{a_1-d_1} & -\big(1+(\frac{d_1}{(a_1-d_1)w})^2\big)
\end{bmatrix}.
\end{equation}
Since $J_{11}=\frac{(a_1-d_1)d_1}{a_1}$ is positive, both $(\overline{u}_-,\overline{w}_-)$ and $(\overline{u}_+,\overline{w}_+)$ are unstable solutions of \eqref{eq1-qs}-\eqref{eq2-qs}, see Lemma \ref{lem:EV}.
To determine stability as steady state of the kinetic system, we calculate the determinant and trace of $J$ from \eqref{J:w+-}:
\begin{align}
|J|&=\frac{d_1}{a_1 (a_1-d_1) w^2} (-(a_1-d_1)^2 w^2+d_1^2),\\
\operatorname{tr}(J)&=\frac{(a_1-d_1)d_1}{a_1}-\big(1+(\frac{d_1}{(a_1-d_1)w})^2\big).
\end{align}
We note $|J|\rightarrow -\frac{d_1(a_1-d_1)}{a_1}$ as $w \rightarrow \infty$.\\
The only roots of the determinant $|J|$ are
\begin{equation}
 w_{\pm} = \pm \frac{d_1}{a_1-d_1}.
\end{equation}
Since $w_{\pm}=\frac{\kappa_1}{2} \pm \sqrt{(\frac{\kappa_1}{2})^2-(\frac{d_1}{a_1-d_1})^2}$ and $\frac{\kappa_1}{2}> \frac{d_1}{a_1-d_1}>0$, it follows
\begin{align}
|J(\overline{u}_+,\overline{w}_+)|&<0,\\
|J(\overline{u}_-,\overline{w}_-)|&>0,
\end{align}
what proves instability of $(u_+,w_+)$, because $|J|=\lambda_1 \lambda_2<0$.\\
The stability of $(u_-,w_-)$:\\
Since $J(\overline{u}_-,\overline{w}_-)>0$, $(\overline{u}_-,\overline{w}_-)$ is unstable if and only if $\operatorname{tr}(J(\overline{u}_-,\overline{w}_-))>0$.\\
$\operatorname{tr}(J(\overline{u}_-,\overline{w}_-))>0$ is equivalent to
\begin{equation*}
\begin{aligned}
 \frac{d_1}{a_1} (a_1-d_1)-1&>(\frac{d_1}{a_1-d_1})^2 \frac{1}{w_-^2} \frac{w_+^2}{w_+^2},\\
 \frac{d_1}{a_1} (a_1-d_1)-1&>(\frac{a_1-d_1}{a_1})^2 w_+^2,\\
 \frac{d_1}{a_1-d_1} \frac{d_1^2}{a_1} - (\frac{d_1}{a_1-d_1})^2 &> (\frac{\kappa_1}{2})^2 + 2 \frac{\kappa_1}{2} \sqrt{ (\frac{\kappa_1}{2})^2 - (\frac{d_1}{a_1-d_1})^2 } + (\frac{\kappa_1}{2})^2 - ( \frac{d_1}{a_1-d_1} )^2,\\
 \frac{d_1}{a_1-d_1} \frac{d_1^2}{2a_1} - (\frac{\kappa_1}{2})^2 &> \frac{\kappa_1}{2} \sqrt{ (\frac{\kappa_1}{2})^2 - (\frac{d_1}{a_1-d_1})^2 }.
\end{aligned}
\end{equation*}
This is not satisfied for $\kappa_1^2 > 2 \frac{d_1^3}{a_1(a_1-d_1)}$. 
We continue assuming that $\frac{d_1}{a_1-d_1} \frac{d_1^2}{2a_1} - (\frac{\kappa_1}{2})^2>0$ and define $x:=\frac{\kappa_1}{2}$ and $y:=\frac{d_1}{a_1-d_1}$.
\begin{equation*}
\begin{aligned}
 y \frac{d_1^2}{2a_1}-x^2 &> x \sqrt{x^2 - y^2},\\
 \frac{d_1^4}{4a_1^2} y^2 - \frac{d_1^2}{a_1} x^2 y &> -x^2 y^2,\\
 (y-\frac{d_1^2}{a_1})x^2 + \frac{d_1^4}{4 a_1^2} y &> 0.
\end{aligned}
\end{equation*}
This is satisfied if and only if 
\begin{equation} \label{x1}
\begin{aligned}
  y&>\frac{d_1^2}{a_1},\\
(\Leftrightarrow a_1 &<\frac{d_1^2}{d_1-1}),
\end{aligned}
\end{equation}
or 
\begin{equation} \label{x2}
\begin{aligned}
x^2&<\frac{d_1^4}{4a_1^2}y (y-\frac{d_1^2}{a_1})^{-1},\\
(\Leftrightarrow \kappa_1^2 &< \frac{d_1^4}{a_1} \frac{1}{a_1-d_1(a_1-d_1)}).
\end{aligned}
\end{equation}
Negation yields the result.
\end{proof}
\begin{proof}[Proof of Lemma \ref{thm:mass}]
Adding a multiple of \eqref{eq1-qs} and \eqref{eq2-qs} and integrating over $\Omega$ leads to
\begin{equation}\label{m:u+w}
\begin{aligned}
\frac{d}{dt} \int \frac{1}{a_1}u+w dx &=\int \left( (\frac{u^2 w}{1+uw}-\frac{d_1}{a_1} u) - w - u^2 w + \kappa_1 \right) dx, \\
 &\leq \int\left(-\frac{d_1}{a_1} u-w+\kappa_1\right)dx,\\
 &\leq -\min(d_1,1)\int \left(\frac{1}{a_1}u+w\right)dx+\kappa_1 \mu(\Omega).
\end{aligned}
\end{equation}
This leads to
\begin{equation}
\limsup_{t \rightarrow \infty}\left( \frac{1}{a_1}\int u dx + \int w dx \right) \leq \frac{\kappa_1}{\min(d_1,1)} \mu(\Omega).
\end{equation}
Additionally, it immediately follows by integrating \eqref{eq2-qs} over $\Omega$:
\begin{equation}\label{m:w}
\frac{d}{dt} \int w dx \leq -\int w dx+\kappa \mu(\Omega).
\end{equation}
From \eqref{m:w} follows 
\begin{equation}
\limsup_{t \rightarrow \infty} \int w dx \leq \kappa_1 \mu(\Omega).
\end{equation}
Since $w\geq 0$, it follows from \eqref{m:u+w}
\begin{equation}
\limsup_{t \rightarrow \infty} \int u dx \leq \frac{a_1}{\min(d_1,1)}\kappa_1 \mu(\Omega).
\end{equation}
\end{proof}
\begin{proof}[Proof of Lemma \ref{thm:bounded}]
 The proof is analogues to the proof of Lemma \ref{thm:mass}, without integrating over $\Omega$. 
\end{proof}
\begin{proof}[Proof of Lemma \ref{le:ampl}]
 The eigenvector associated to the eigenvalue $\lambda_+$ of a 2x2 matrix $(a_{ij}+\delta_{i2}\delta_{j2}D_w k^2)$ is $v_k:=\begin{bmatrix} 1 \\ \frac{a_{21}}{\lambda_+-a_{22}+D_w k^2} \end{bmatrix}$.
 It follows that $v_k \phi_k$ is the eigenvector of $J$ associated to $\lambda_+(k)$.
\end{proof}
\subsection{Additional figures}
In this section, we show numerically obtained solutions which were referred to in the previous sections. Additionally, we show for convenience the explicit formula for the "perturbation" function $p$, defined by \eqref{ICdef1}-\eqref{ICdef5}:
\begin{equation}\label{spline}
p(x)=\left\{\begin{array}{ll} 
              \frac{4 (-1+s-\epsilon)}{(s-\epsilon)(-2s+2s^2-\epsilon)}x^2 -1     ,                                                              & x \in [0,s-\epsilon),\\ 
              \frac{2(1+2\epsilon)x^2-4(s+\epsilon)x+2s^2+2s\epsilon-2s^2\epsilon-\epsilon^2}{\epsilon (-2s+2s^2-\epsilon)},                     & x \in [s-\epsilon, s+\epsilon],\\
              \frac{(2 s + 4 s^2 - 2 s^3 + 3 \epsilon + 3 s \epsilon - 2 s^2 \epsilon + \epsilon^2 - 8 x (s + \epsilon) + 4 x^2 (s + \epsilon))}{(-2s+2s^2-\epsilon)(-1+s+\epsilon)},
 & x \in (s+\epsilon,1].\\
            \end{array}\right.
\end{equation}
\begin{minipage}[t]{40em}
\begin{center}
\begin{tabular}{p{15em}p{15em}}
   \centering
   \includegraphics[width=15em]{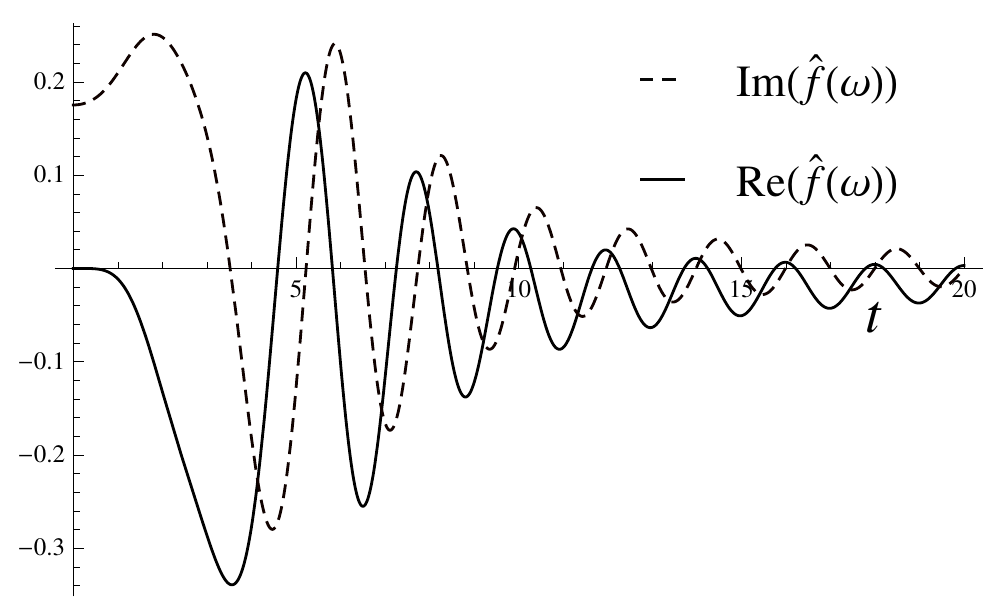}
&
   \centering
   \makeatletter{}\begingroup
  \makeatletter
  \providecommand\color[2][]{    \GenericError{(gnuplot) \space\space\space\@spaces}{      Package color not loaded in conjunction with
      terminal option `colourtext'    }{See the gnuplot documentation for explanation.    }{Either use 'blacktext' in gnuplot or load the package
      color.sty in LaTeX.}    \renewcommand\color[2][]{}  }  \providecommand\includegraphics[2][]{    \GenericError{(gnuplot) \space\space\space\@spaces}{      Package graphicx or graphics not loaded    }{See the gnuplot documentation for explanation.    }{The gnuplot epslatex terminal needs graphicx.sty or graphics.sty.}    \renewcommand\includegraphics[2][]{}  }  \providecommand\rotatebox[2]{#2}  \@ifundefined{ifGPcolor}{    \newif\ifGPcolor
    \GPcolorfalse
  }{}  \@ifundefined{ifGPblacktext}{    \newif\ifGPblacktext
    \GPblacktexttrue
  }{}    \let\gplgaddtomacro\g@addto@macro
    \gdef\gplbacktext{}  \gdef\gplfronttext{}  \makeatother
  \ifGPblacktext
        \def\colorrgb#1{}    \def\colorgray#1{}  \else
        \ifGPcolor
      \def\colorrgb#1{\color[rgb]{#1}}      \def\colorgray#1{\color[gray]{#1}}      \expandafter\def\csname LTw\endcsname{\color{white}}      \expandafter\def\csname LTb\endcsname{\color{black}}      \expandafter\def\csname LTa\endcsname{\color{black}}      \expandafter\def\csname LT0\endcsname{\color[rgb]{1,0,0}}      \expandafter\def\csname LT1\endcsname{\color[rgb]{0,1,0}}      \expandafter\def\csname LT2\endcsname{\color[rgb]{0,0,1}}      \expandafter\def\csname LT3\endcsname{\color[rgb]{1,0,1}}      \expandafter\def\csname LT4\endcsname{\color[rgb]{0,1,1}}      \expandafter\def\csname LT5\endcsname{\color[rgb]{1,1,0}}      \expandafter\def\csname LT6\endcsname{\color[rgb]{0,0,0}}      \expandafter\def\csname LT7\endcsname{\color[rgb]{1,0.3,0}}      \expandafter\def\csname LT8\endcsname{\color[rgb]{0.5,0.5,0.5}}    \else
            \def\colorrgb#1{\color{black}}      \def\colorgray#1{\color[gray]{#1}}      \expandafter\def\csname LTw\endcsname{\color{white}}      \expandafter\def\csname LTb\endcsname{\color{black}}      \expandafter\def\csname LTa\endcsname{\color{black}}      \expandafter\def\csname LT0\endcsname{\color{black}}      \expandafter\def\csname LT1\endcsname{\color{black}}      \expandafter\def\csname LT2\endcsname{\color{black}}      \expandafter\def\csname LT3\endcsname{\color{black}}      \expandafter\def\csname LT4\endcsname{\color{black}}      \expandafter\def\csname LT5\endcsname{\color{black}}      \expandafter\def\csname LT6\endcsname{\color{black}}      \expandafter\def\csname LT7\endcsname{\color{black}}      \expandafter\def\csname LT8\endcsname{\color{black}}    \fi
  \fi
  \setlength{\unitlength}{0.0500bp}  \begin{picture}(3600.00,2520.00)    \gplgaddtomacro\gplfronttext{      \csname LTb\endcsname      \put(902,440){\makebox(0,0)[r]{\strut{} 0.2}}      \put(902,667){\makebox(0,0)[r]{\strut{} 0.25}}      \put(902,894){\makebox(0,0)[r]{\strut{} 0.3}}      \put(902,1121){\makebox(0,0)[r]{\strut{} 0.35}}      \put(902,1348){\makebox(0,0)[r]{\strut{} 0.4}}      \put(902,1575){\makebox(0,0)[r]{\strut{} 0.45}}      \put(902,1802){\makebox(0,0)[r]{\strut{} 0.5}}      \put(902,2029){\makebox(0,0)[r]{\strut{} 0.55}}      \put(902,2256){\makebox(0,0)[r]{\strut{} 0.6}}      \put(1034,220){\makebox(0,0){\strut{} 0}}      \put(1347,220){\makebox(0,0){\strut{} 2}}      \put(1660,220){\makebox(0,0){\strut{} 4}}      \put(1973,220){\makebox(0,0){\strut{} 6}}      \put(2287,220){\makebox(0,0){\strut{} 8}}      \put(2600,220){\makebox(0,0){\strut{} 10}}      \put(2913,220){\makebox(0,0){\strut{} 12}}      \put(3226,220){\makebox(0,0){\strut{} 14}}      \put(2287,0){\makebox(0,0){\strut{} $t$}}    }    \gplgaddtomacro\gplfronttext{      \csname LTb\endcsname      \put(2239,1053){\makebox(0,0)[r]{\strut{}$x_0$}}      \csname LTb\endcsname      \put(2239,833){\makebox(0,0)[r]{\strut{}$x_1$}}      \csname LTb\endcsname      \put(2239,613){\makebox(0,0)[r]{\strut{}$x_2$}}    }    \gplbacktext
    \put(0,0){\includegraphics{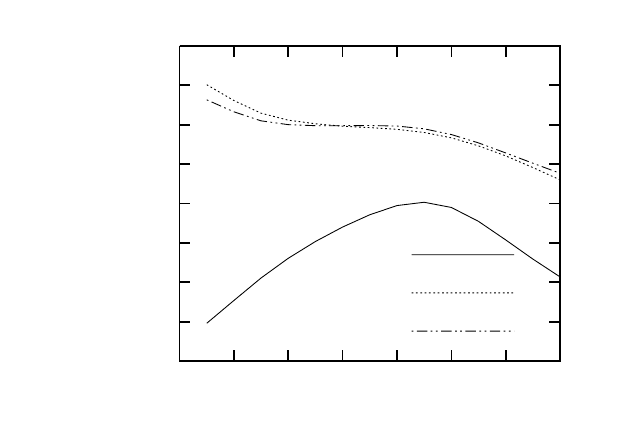}}    \gplfronttext
  \end{picture}\endgroup
 
\end{tabular}
\end{center}
\captionof{figure}{Left: Finite Fourier Transform $\hat{f}(\omega)=\int_0^1 \cos(4\pi x^2) e^{-i \pi \omega x}dx$. Right: Order $\frac{\log(\frac{u(t,x_i)-\overline{u}_-}{u_0(x_i)-\overline{u}_-})}{t}$ of the growth of the perturbation $-\epsilon \cos(4\pi x^2)$ of $\overline{u}_-$ at $x_0=0.250092$, $x_1=\frac{1}{2}$, $x_2=0.866028 \approx \frac{\sqrt{3}}{2}$. See fig. \ref{fig:cosxx} for the numerically obtained solution.}
\label{fig:ff-cosxx}
\end{minipage}\\
\begin{minipage}[t]{40em}
\begin{center}
\begin{tabular}{p{15em}p{15em}}
   \centering
   \makeatletter{}\begingroup
  \makeatletter
  \providecommand\color[2][]{    \GenericError{(gnuplot) \space\space\space\@spaces}{      Package color not loaded in conjunction with
      terminal option `colourtext'    }{See the gnuplot documentation for explanation.    }{Either use 'blacktext' in gnuplot or load the package
      color.sty in LaTeX.}    \renewcommand\color[2][]{}  }  \providecommand\includegraphics[2][]{    \GenericError{(gnuplot) \space\space\space\@spaces}{      Package graphicx or graphics not loaded    }{See the gnuplot documentation for explanation.    }{The gnuplot epslatex terminal needs graphicx.sty or graphics.sty.}    \renewcommand\includegraphics[2][]{}  }  \providecommand\rotatebox[2]{#2}  \@ifundefined{ifGPcolor}{    \newif\ifGPcolor
    \GPcolorfalse
  }{}  \@ifundefined{ifGPblacktext}{    \newif\ifGPblacktext
    \GPblacktexttrue
  }{}    \let\gplgaddtomacro\g@addto@macro
    \gdef\gplbacktext{}  \gdef\gplfronttext{}  \makeatother
  \ifGPblacktext
        \def\colorrgb#1{}    \def\colorgray#1{}  \else
        \ifGPcolor
      \def\colorrgb#1{\color[rgb]{#1}}      \def\colorgray#1{\color[gray]{#1}}      \expandafter\def\csname LTw\endcsname{\color{white}}      \expandafter\def\csname LTb\endcsname{\color{black}}      \expandafter\def\csname LTa\endcsname{\color{black}}      \expandafter\def\csname LT0\endcsname{\color[rgb]{1,0,0}}      \expandafter\def\csname LT1\endcsname{\color[rgb]{0,1,0}}      \expandafter\def\csname LT2\endcsname{\color[rgb]{0,0,1}}      \expandafter\def\csname LT3\endcsname{\color[rgb]{1,0,1}}      \expandafter\def\csname LT4\endcsname{\color[rgb]{0,1,1}}      \expandafter\def\csname LT5\endcsname{\color[rgb]{1,1,0}}      \expandafter\def\csname LT6\endcsname{\color[rgb]{0,0,0}}      \expandafter\def\csname LT7\endcsname{\color[rgb]{1,0.3,0}}      \expandafter\def\csname LT8\endcsname{\color[rgb]{0.5,0.5,0.5}}    \else
            \def\colorrgb#1{\color{black}}      \def\colorgray#1{\color[gray]{#1}}      \expandafter\def\csname LTw\endcsname{\color{white}}      \expandafter\def\csname LTb\endcsname{\color{black}}      \expandafter\def\csname LTa\endcsname{\color{black}}      \expandafter\def\csname LT0\endcsname{\color{black}}      \expandafter\def\csname LT1\endcsname{\color{black}}      \expandafter\def\csname LT2\endcsname{\color{black}}      \expandafter\def\csname LT3\endcsname{\color{black}}      \expandafter\def\csname LT4\endcsname{\color{black}}      \expandafter\def\csname LT5\endcsname{\color{black}}      \expandafter\def\csname LT6\endcsname{\color{black}}      \expandafter\def\csname LT7\endcsname{\color{black}}      \expandafter\def\csname LT8\endcsname{\color{black}}    \fi
  \fi
  \setlength{\unitlength}{0.0500bp}  \begin{picture}(3600.00,2520.00)    \gplgaddtomacro\gplfronttext{      \csname LTb\endcsname      \put(902,440){\makebox(0,0)[r]{\strut{} 0.25}}      \put(902,803){\makebox(0,0)[r]{\strut{} 0.3}}      \put(902,1166){\makebox(0,0)[r]{\strut{} 0.35}}      \put(902,1530){\makebox(0,0)[r]{\strut{} 0.4}}      \put(902,1893){\makebox(0,0)[r]{\strut{} 0.45}}      \put(902,2256){\makebox(0,0)[r]{\strut{} 0.5}}      \put(1034,220){\makebox(0,0){\strut{} 0}}      \put(1347,220){\makebox(0,0){\strut{} 2}}      \put(1660,220){\makebox(0,0){\strut{} 4}}      \put(1973,220){\makebox(0,0){\strut{} 6}}      \put(2287,220){\makebox(0,0){\strut{} 8}}      \put(2600,220){\makebox(0,0){\strut{} 10}}      \put(2913,220){\makebox(0,0){\strut{} 12}}      \put(3226,220){\makebox(0,0){\strut{} 14}}      \put(2287,0){\makebox(0,0){\strut{} $t$}}    }    \gplgaddtomacro\gplfronttext{      \csname LTb\endcsname      \put(2239,833){\makebox(0,0)[r]{\strut{}\small{$x_1$}}}      \csname LTb\endcsname      \put(2239,613){\makebox(0,0)[r]{\strut{}\small{$x_2$}}}    }    \gplbacktext
    \put(0,0){\includegraphics{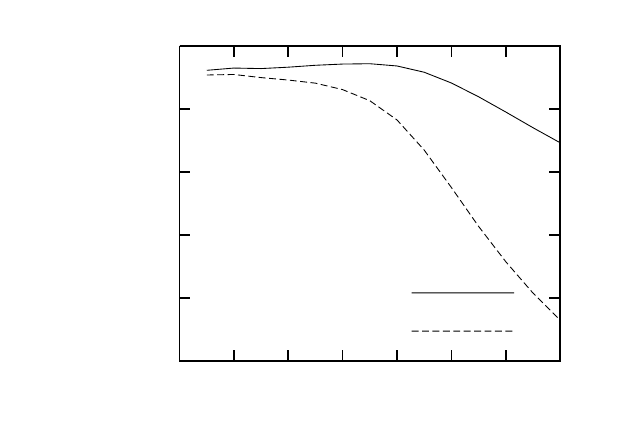}}    \gplfronttext
  \end{picture}\endgroup
 
\end{tabular}
\end{center}
\captionof{figure}{Order $\frac{\log(\frac{u(t,x_i)-\overline{u}_-}{u_0(x_i)-\overline{u}_-})}{t}$ of the growth of perturbation $-\epsilon \cos(4\pi x)$ of $\overline{u_-}$ at $x_1=0.250092$ and $x_2=\frac{1}{2}$. See fig. \ref{fig:cos} for the numerically obtained solution.}
\label{fig:ff-cosx}
\end{minipage}\\
\begin{minipage}[t]{40em}
\begin{center}
\begin{tabular}{p{15em}p{15em}}
   \centering
   \makeatletter{}\begingroup
  \makeatletter
  \providecommand\color[2][]{    \GenericError{(gnuplot) \space\space\space\@spaces}{      Package color not loaded in conjunction with
      terminal option `colourtext'    }{See the gnuplot documentation for explanation.    }{Either use 'blacktext' in gnuplot or load the package
      color.sty in LaTeX.}    \renewcommand\color[2][]{}  }  \providecommand\includegraphics[2][]{    \GenericError{(gnuplot) \space\space\space\@spaces}{      Package graphicx or graphics not loaded    }{See the gnuplot documentation for explanation.    }{The gnuplot epslatex terminal needs graphicx.sty or graphics.sty.}    \renewcommand\includegraphics[2][]{}  }  \providecommand\rotatebox[2]{#2}  \@ifundefined{ifGPcolor}{    \newif\ifGPcolor
    \GPcolorfalse
  }{}  \@ifundefined{ifGPblacktext}{    \newif\ifGPblacktext
    \GPblacktexttrue
  }{}    \let\gplgaddtomacro\g@addto@macro
    \gdef\gplbacktext{}  \gdef\gplfronttext{}  \makeatother
  \ifGPblacktext
        \def\colorrgb#1{}    \def\colorgray#1{}  \else
        \ifGPcolor
      \def\colorrgb#1{\color[rgb]{#1}}      \def\colorgray#1{\color[gray]{#1}}      \expandafter\def\csname LTw\endcsname{\color{white}}      \expandafter\def\csname LTb\endcsname{\color{black}}      \expandafter\def\csname LTa\endcsname{\color{black}}      \expandafter\def\csname LT0\endcsname{\color[rgb]{1,0,0}}      \expandafter\def\csname LT1\endcsname{\color[rgb]{0,1,0}}      \expandafter\def\csname LT2\endcsname{\color[rgb]{0,0,1}}      \expandafter\def\csname LT3\endcsname{\color[rgb]{1,0,1}}      \expandafter\def\csname LT4\endcsname{\color[rgb]{0,1,1}}      \expandafter\def\csname LT5\endcsname{\color[rgb]{1,1,0}}      \expandafter\def\csname LT6\endcsname{\color[rgb]{0,0,0}}      \expandafter\def\csname LT7\endcsname{\color[rgb]{1,0.3,0}}      \expandafter\def\csname LT8\endcsname{\color[rgb]{0.5,0.5,0.5}}    \else
            \def\colorrgb#1{\color{black}}      \def\colorgray#1{\color[gray]{#1}}      \expandafter\def\csname LTw\endcsname{\color{white}}      \expandafter\def\csname LTb\endcsname{\color{black}}      \expandafter\def\csname LTa\endcsname{\color{black}}      \expandafter\def\csname LT0\endcsname{\color{black}}      \expandafter\def\csname LT1\endcsname{\color{black}}      \expandafter\def\csname LT2\endcsname{\color{black}}      \expandafter\def\csname LT3\endcsname{\color{black}}      \expandafter\def\csname LT4\endcsname{\color{black}}      \expandafter\def\csname LT5\endcsname{\color{black}}      \expandafter\def\csname LT6\endcsname{\color{black}}      \expandafter\def\csname LT7\endcsname{\color{black}}      \expandafter\def\csname LT8\endcsname{\color{black}}    \fi
  \fi
  \setlength{\unitlength}{0.0500bp}  \begin{picture}(3600.00,3528.00)    \gplgaddtomacro\gplbacktext{    }    \gplgaddtomacro\gplfronttext{      \csname LTb\endcsname      \put(2135,2804){\makebox(0,0)[r]{\strut{}shape of $u_0$}}      \csname LTb\endcsname      \put(507,1092){\makebox(0,0){\strut{} 0}}      \put(1697,557){\makebox(0,0){\strut{} 25}}      \put(908,819){\makebox(0,0){\strut{}$t$}}      \put(1893,615){\makebox(0,0){\strut{} 0}}      \put(3083,1150){\makebox(0,0){\strut{} 1}}      \put(2692,819){\makebox(0,0){\strut{}$x$}}      \put(484,1656){\makebox(0,0)[r]{\strut{} 0}}      \put(484,1850){\makebox(0,0)[r]{\strut{} 100}}      \put(484,2043){\makebox(0,0)[r]{\strut{} 200}}      \put(484,2237){\makebox(0,0)[r]{\strut{} 300}}      \put(484,2431){\makebox(0,0)[r]{\strut{} 400}}    }    \gplbacktext
    \put(0,0){\includegraphics{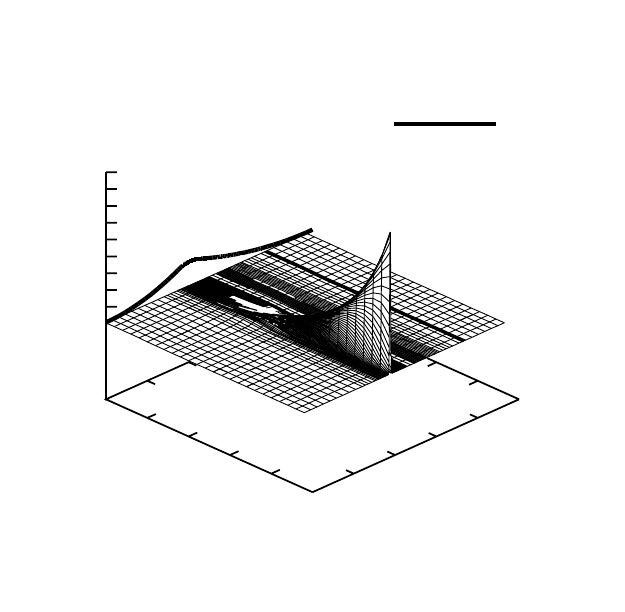}}    \gplfronttext
  \end{picture}\endgroup
 
&
   \centering
   \makeatletter{}\begingroup
  \makeatletter
  \providecommand\color[2][]{    \GenericError{(gnuplot) \space\space\space\@spaces}{      Package color not loaded in conjunction with
      terminal option `colourtext'    }{See the gnuplot documentation for explanation.    }{Either use 'blacktext' in gnuplot or load the package
      color.sty in LaTeX.}    \renewcommand\color[2][]{}  }  \providecommand\includegraphics[2][]{    \GenericError{(gnuplot) \space\space\space\@spaces}{      Package graphicx or graphics not loaded    }{See the gnuplot documentation for explanation.    }{The gnuplot epslatex terminal needs graphicx.sty or graphics.sty.}    \renewcommand\includegraphics[2][]{}  }  \providecommand\rotatebox[2]{#2}  \@ifundefined{ifGPcolor}{    \newif\ifGPcolor
    \GPcolorfalse
  }{}  \@ifundefined{ifGPblacktext}{    \newif\ifGPblacktext
    \GPblacktexttrue
  }{}    \let\gplgaddtomacro\g@addto@macro
    \gdef\gplbacktext{}  \gdef\gplfronttext{}  \makeatother
  \ifGPblacktext
        \def\colorrgb#1{}    \def\colorgray#1{}  \else
        \ifGPcolor
      \def\colorrgb#1{\color[rgb]{#1}}      \def\colorgray#1{\color[gray]{#1}}      \expandafter\def\csname LTw\endcsname{\color{white}}      \expandafter\def\csname LTb\endcsname{\color{black}}      \expandafter\def\csname LTa\endcsname{\color{black}}      \expandafter\def\csname LT0\endcsname{\color[rgb]{1,0,0}}      \expandafter\def\csname LT1\endcsname{\color[rgb]{0,1,0}}      \expandafter\def\csname LT2\endcsname{\color[rgb]{0,0,1}}      \expandafter\def\csname LT3\endcsname{\color[rgb]{1,0,1}}      \expandafter\def\csname LT4\endcsname{\color[rgb]{0,1,1}}      \expandafter\def\csname LT5\endcsname{\color[rgb]{1,1,0}}      \expandafter\def\csname LT6\endcsname{\color[rgb]{0,0,0}}      \expandafter\def\csname LT7\endcsname{\color[rgb]{1,0.3,0}}      \expandafter\def\csname LT8\endcsname{\color[rgb]{0.5,0.5,0.5}}    \else
            \def\colorrgb#1{\color{black}}      \def\colorgray#1{\color[gray]{#1}}      \expandafter\def\csname LTw\endcsname{\color{white}}      \expandafter\def\csname LTb\endcsname{\color{black}}      \expandafter\def\csname LTa\endcsname{\color{black}}      \expandafter\def\csname LT0\endcsname{\color{black}}      \expandafter\def\csname LT1\endcsname{\color{black}}      \expandafter\def\csname LT2\endcsname{\color{black}}      \expandafter\def\csname LT3\endcsname{\color{black}}      \expandafter\def\csname LT4\endcsname{\color{black}}      \expandafter\def\csname LT5\endcsname{\color{black}}      \expandafter\def\csname LT6\endcsname{\color{black}}      \expandafter\def\csname LT7\endcsname{\color{black}}      \expandafter\def\csname LT8\endcsname{\color{black}}    \fi
  \fi
  \setlength{\unitlength}{0.0500bp}  \begin{picture}(3600.00,3528.00)    \gplgaddtomacro\gplbacktext{    }    \gplgaddtomacro\gplfronttext{      \csname LTb\endcsname      \put(507,1092){\makebox(0,0){\strut{} 0}}      \put(1697,557){\makebox(0,0){\strut{} 25}}      \put(908,819){\makebox(0,0){\strut{}$t$}}      \put(1893,615){\makebox(0,0){\strut{} 0}}      \put(3083,1150){\makebox(0,0){\strut{} 1}}      \put(2692,819){\makebox(0,0){\strut{}$x$}}      \put(484,1656){\makebox(0,0)[r]{\strut{} 0}}      \put(484,1905){\makebox(0,0)[r]{\strut{} 0.2}}      \put(484,2154){\makebox(0,0)[r]{\strut{} 0.4}}      \put(484,2404){\makebox(0,0)[r]{\strut{} 0.6}}    }    \gplbacktext
    \put(0,0){\includegraphics{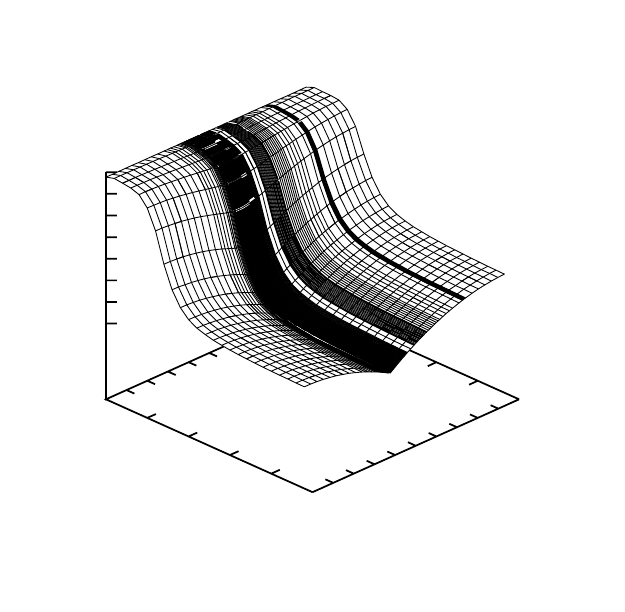}}    \gplfronttext
  \end{picture}\endgroup
 
\end{tabular}
\end{center}
\captionof{figure}{Numerically obtained solution for initial conditions \eqref{IC:sh} with $\epsilon=0.05$, $\epsilon_1=0.1$, $s=0.4$, $\overline{u}=2.215$, $\overline{w}=0.677123$ and parameters
$a_1=2.5,d_1=1.5,\kappa_1=4$ and $D_w=5.8541$,. Left: component $u$. Right: component $w$.}
\label{fig:np}
\end{minipage}\\

\begin{minipage}[t]{40em}
\begin{center}
\begin{tabular}{p{15em}p{15em}}
   \centering
   \makeatletter{}\begingroup
  \makeatletter
  \providecommand\color[2][]{    \GenericError{(gnuplot) \space\space\space\@spaces}{      Package color not loaded in conjunction with
      terminal option `colourtext'    }{See the gnuplot documentation for explanation.    }{Either use 'blacktext' in gnuplot or load the package
      color.sty in LaTeX.}    \renewcommand\color[2][]{}  }  \providecommand\includegraphics[2][]{    \GenericError{(gnuplot) \space\space\space\@spaces}{      Package graphicx or graphics not loaded    }{See the gnuplot documentation for explanation.    }{The gnuplot epslatex terminal needs graphicx.sty or graphics.sty.}    \renewcommand\includegraphics[2][]{}  }  \providecommand\rotatebox[2]{#2}  \@ifundefined{ifGPcolor}{    \newif\ifGPcolor
    \GPcolorfalse
  }{}  \@ifundefined{ifGPblacktext}{    \newif\ifGPblacktext
    \GPblacktexttrue
  }{}    \let\gplgaddtomacro\g@addto@macro
    \gdef\gplbacktext{}  \gdef\gplfronttext{}  \makeatother
  \ifGPblacktext
        \def\colorrgb#1{}    \def\colorgray#1{}  \else
        \ifGPcolor
      \def\colorrgb#1{\color[rgb]{#1}}      \def\colorgray#1{\color[gray]{#1}}      \expandafter\def\csname LTw\endcsname{\color{white}}      \expandafter\def\csname LTb\endcsname{\color{black}}      \expandafter\def\csname LTa\endcsname{\color{black}}      \expandafter\def\csname LT0\endcsname{\color[rgb]{1,0,0}}      \expandafter\def\csname LT1\endcsname{\color[rgb]{0,1,0}}      \expandafter\def\csname LT2\endcsname{\color[rgb]{0,0,1}}      \expandafter\def\csname LT3\endcsname{\color[rgb]{1,0,1}}      \expandafter\def\csname LT4\endcsname{\color[rgb]{0,1,1}}      \expandafter\def\csname LT5\endcsname{\color[rgb]{1,1,0}}      \expandafter\def\csname LT6\endcsname{\color[rgb]{0,0,0}}      \expandafter\def\csname LT7\endcsname{\color[rgb]{1,0.3,0}}      \expandafter\def\csname LT8\endcsname{\color[rgb]{0.5,0.5,0.5}}    \else
            \def\colorrgb#1{\color{black}}      \def\colorgray#1{\color[gray]{#1}}      \expandafter\def\csname LTw\endcsname{\color{white}}      \expandafter\def\csname LTb\endcsname{\color{black}}      \expandafter\def\csname LTa\endcsname{\color{black}}      \expandafter\def\csname LT0\endcsname{\color{black}}      \expandafter\def\csname LT1\endcsname{\color{black}}      \expandafter\def\csname LT2\endcsname{\color{black}}      \expandafter\def\csname LT3\endcsname{\color{black}}      \expandafter\def\csname LT4\endcsname{\color{black}}      \expandafter\def\csname LT5\endcsname{\color{black}}      \expandafter\def\csname LT6\endcsname{\color{black}}      \expandafter\def\csname LT7\endcsname{\color{black}}      \expandafter\def\csname LT8\endcsname{\color{black}}    \fi
  \fi
  \setlength{\unitlength}{0.0500bp}  \begin{picture}(3600.00,3528.00)    \gplgaddtomacro\gplbacktext{    }    \gplgaddtomacro\gplfronttext{      \csname LTb\endcsname      \put(507,1092){\makebox(0,0){\strut{} 0}}      \put(1697,557){\makebox(0,0){\strut{} 25}}      \put(908,819){\makebox(0,0){\strut{}$t$}}      \put(1893,615){\makebox(0,0){\strut{} 0}}      \put(3083,1150){\makebox(0,0){\strut{} 1}}      \put(2692,819){\makebox(0,0){\strut{}$x$}}      \put(484,1656){\makebox(0,0)[r]{\strut{} 0}}      \put(484,2004){\makebox(0,0)[r]{\strut{} 0.1}}      \put(484,2354){\makebox(0,0)[r]{\strut{} 0.2}}    }    \gplbacktext
    \put(0,0){\includegraphics{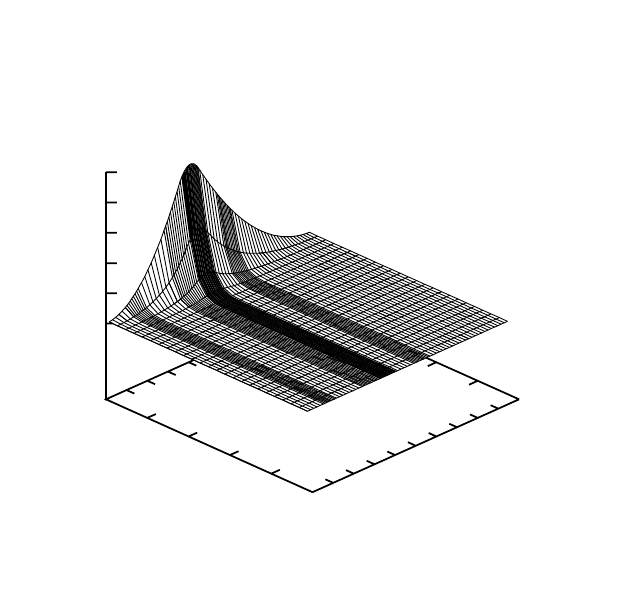}}    \gplfronttext
  \end{picture}\endgroup
 
&
   \centering
   \makeatletter{}\begingroup
  \makeatletter
  \providecommand\color[2][]{    \GenericError{(gnuplot) \space\space\space\@spaces}{      Package color not loaded in conjunction with
      terminal option `colourtext'    }{See the gnuplot documentation for explanation.    }{Either use 'blacktext' in gnuplot or load the package
      color.sty in LaTeX.}    \renewcommand\color[2][]{}  }  \providecommand\includegraphics[2][]{    \GenericError{(gnuplot) \space\space\space\@spaces}{      Package graphicx or graphics not loaded    }{See the gnuplot documentation for explanation.    }{The gnuplot epslatex terminal needs graphicx.sty or graphics.sty.}    \renewcommand\includegraphics[2][]{}  }  \providecommand\rotatebox[2]{#2}  \@ifundefined{ifGPcolor}{    \newif\ifGPcolor
    \GPcolorfalse
  }{}  \@ifundefined{ifGPblacktext}{    \newif\ifGPblacktext
    \GPblacktexttrue
  }{}    \let\gplgaddtomacro\g@addto@macro
    \gdef\gplbacktext{}  \gdef\gplfronttext{}  \makeatother
  \ifGPblacktext
        \def\colorrgb#1{}    \def\colorgray#1{}  \else
        \ifGPcolor
      \def\colorrgb#1{\color[rgb]{#1}}      \def\colorgray#1{\color[gray]{#1}}      \expandafter\def\csname LTw\endcsname{\color{white}}      \expandafter\def\csname LTb\endcsname{\color{black}}      \expandafter\def\csname LTa\endcsname{\color{black}}      \expandafter\def\csname LT0\endcsname{\color[rgb]{1,0,0}}      \expandafter\def\csname LT1\endcsname{\color[rgb]{0,1,0}}      \expandafter\def\csname LT2\endcsname{\color[rgb]{0,0,1}}      \expandafter\def\csname LT3\endcsname{\color[rgb]{1,0,1}}      \expandafter\def\csname LT4\endcsname{\color[rgb]{0,1,1}}      \expandafter\def\csname LT5\endcsname{\color[rgb]{1,1,0}}      \expandafter\def\csname LT6\endcsname{\color[rgb]{0,0,0}}      \expandafter\def\csname LT7\endcsname{\color[rgb]{1,0.3,0}}      \expandafter\def\csname LT8\endcsname{\color[rgb]{0.5,0.5,0.5}}    \else
            \def\colorrgb#1{\color{black}}      \def\colorgray#1{\color[gray]{#1}}      \expandafter\def\csname LTw\endcsname{\color{white}}      \expandafter\def\csname LTb\endcsname{\color{black}}      \expandafter\def\csname LTa\endcsname{\color{black}}      \expandafter\def\csname LT0\endcsname{\color{black}}      \expandafter\def\csname LT1\endcsname{\color{black}}      \expandafter\def\csname LT2\endcsname{\color{black}}      \expandafter\def\csname LT3\endcsname{\color{black}}      \expandafter\def\csname LT4\endcsname{\color{black}}      \expandafter\def\csname LT5\endcsname{\color{black}}      \expandafter\def\csname LT6\endcsname{\color{black}}      \expandafter\def\csname LT7\endcsname{\color{black}}      \expandafter\def\csname LT8\endcsname{\color{black}}    \fi
  \fi
  \setlength{\unitlength}{0.0500bp}  \begin{picture}(3600.00,3528.00)    \gplgaddtomacro\gplbacktext{    }    \gplgaddtomacro\gplfronttext{      \csname LTb\endcsname      \put(507,1092){\makebox(0,0){\strut{} 0}}      \put(1697,557){\makebox(0,0){\strut{} 25}}      \put(908,819){\makebox(0,0){\strut{}$t$}}      \put(1893,615){\makebox(0,0){\strut{} 0}}      \put(3083,1150){\makebox(0,0){\strut{} 1}}      \put(2692,819){\makebox(0,0){\strut{}$x$}}      \put(484,1656){\makebox(0,0)[r]{\strut{} 2.8}}      \put(484,1946){\makebox(0,0)[r]{\strut{} 3.2}}      \put(484,2237){\makebox(0,0)[r]{\strut{} 3.6}}      \put(484,2528){\makebox(0,0)[r]{\strut{} 4}}    }    \gplbacktext
    \put(0,0){\includegraphics{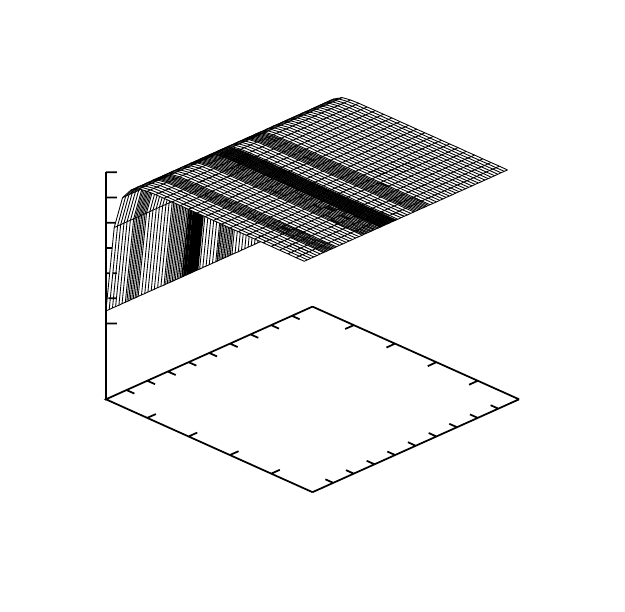}}    \gplfronttext
  \end{picture}\endgroup
 
\end{tabular}
\end{center}
\captionof{figure}{Numerical solution for initial conditions very close to the stable steady state $(\overline{u}_0,\overline{w}_0)$, parameters $a_1=2.5,d_1=1.5,\kappa_1=4$. Left: component $u$. Right: component $w$.}
\label{fig:trivstab}
\end{minipage}\\
\subsection{Mesh asymptotic}
In this section, we investigate the asymptotic behavior of the error due to numerical approximation. Since we do not know the true solution, we investigate the asymptotic behavior of the difference of the solution $(u,w)$ and a calculated "reference solution" $(u_{\text{ref}},w_{\text{ref}})$. The reference solution is the numerical solution on a much finer mesh in time and space.\\
First, we show this error for the approximation of the configuration in the introductory part for large diffusion coefficient. In that case, only a single spike arises close to the position where the initial condition has a maximum.
In figure \ref{fig:L2-sisp}, the error in $L^2$ norm and the corresponding order of the error reduction under mesh refinement is plotted for equidistant mesh.\\
We observe the expected order $O(h^2)$ of error reduction for piecewise linear approximation, see e.g. \cite{ELS2000}.\\
\begin{minipage}[t]{40em}
\begin{tabular}{p{18em}p{22em}}
      \makeatletter{}\begingroup
  \makeatletter
  \providecommand\color[2][]{    \GenericError{(gnuplot) \space\space\space\@spaces}{      Package color not loaded in conjunction with
      terminal option `colourtext'    }{See the gnuplot documentation for explanation.    }{Either use 'blacktext' in gnuplot or load the package
      color.sty in LaTeX.}    \renewcommand\color[2][]{}  }  \providecommand\includegraphics[2][]{    \GenericError{(gnuplot) \space\space\space\@spaces}{      Package graphicx or graphics not loaded    }{See the gnuplot documentation for explanation.    }{The gnuplot epslatex terminal needs graphicx.sty or graphics.sty.}    \renewcommand\includegraphics[2][]{}  }  \providecommand\rotatebox[2]{#2}  \@ifundefined{ifGPcolor}{    \newif\ifGPcolor
    \GPcolorfalse
  }{}  \@ifundefined{ifGPblacktext}{    \newif\ifGPblacktext
    \GPblacktexttrue
  }{}    \let\gplgaddtomacro\g@addto@macro
    \gdef\gplbacktext{}  \gdef\gplfronttext{}  \makeatother
  \ifGPblacktext
        \def\colorrgb#1{}    \def\colorgray#1{}  \else
        \ifGPcolor
      \def\colorrgb#1{\color[rgb]{#1}}      \def\colorgray#1{\color[gray]{#1}}      \expandafter\def\csname LTw\endcsname{\color{white}}      \expandafter\def\csname LTb\endcsname{\color{black}}      \expandafter\def\csname LTa\endcsname{\color{black}}      \expandafter\def\csname LT0\endcsname{\color[rgb]{1,0,0}}      \expandafter\def\csname LT1\endcsname{\color[rgb]{0,1,0}}      \expandafter\def\csname LT2\endcsname{\color[rgb]{0,0,1}}      \expandafter\def\csname LT3\endcsname{\color[rgb]{1,0,1}}      \expandafter\def\csname LT4\endcsname{\color[rgb]{0,1,1}}      \expandafter\def\csname LT5\endcsname{\color[rgb]{1,1,0}}      \expandafter\def\csname LT6\endcsname{\color[rgb]{0,0,0}}      \expandafter\def\csname LT7\endcsname{\color[rgb]{1,0.3,0}}      \expandafter\def\csname LT8\endcsname{\color[rgb]{0.5,0.5,0.5}}    \else
            \def\colorrgb#1{\color{black}}      \def\colorgray#1{\color[gray]{#1}}      \expandafter\def\csname LTw\endcsname{\color{white}}      \expandafter\def\csname LTb\endcsname{\color{black}}      \expandafter\def\csname LTa\endcsname{\color{black}}      \expandafter\def\csname LT0\endcsname{\color{black}}      \expandafter\def\csname LT1\endcsname{\color{black}}      \expandafter\def\csname LT2\endcsname{\color{black}}      \expandafter\def\csname LT3\endcsname{\color{black}}      \expandafter\def\csname LT4\endcsname{\color{black}}      \expandafter\def\csname LT5\endcsname{\color{black}}      \expandafter\def\csname LT6\endcsname{\color{black}}      \expandafter\def\csname LT7\endcsname{\color{black}}      \expandafter\def\csname LT8\endcsname{\color{black}}    \fi
  \fi
  \setlength{\unitlength}{0.0500bp}  \begin{picture}(5040.00,3528.00)    \gplgaddtomacro\gplfronttext{      \csname LTb\endcsname      \put(1122,704){\makebox(0,0)[r]{\strut{} 1e-07}}      \put(1122,944){\makebox(0,0)[r]{\strut{} 1e-06}}      \put(1122,1185){\makebox(0,0)[r]{\strut{} 1e-05}}      \put(1122,1425){\makebox(0,0)[r]{\strut{} 0.0001}}      \put(1122,1665){\makebox(0,0)[r]{\strut{} 0.001}}      \put(1122,1906){\makebox(0,0)[r]{\strut{} 0.01}}      \put(1122,2146){\makebox(0,0)[r]{\strut{} 0.1}}      \put(1122,2386){\makebox(0,0)[r]{\strut{} 1}}      \put(1122,2627){\makebox(0,0)[r]{\strut{} 10}}      \put(1122,2867){\makebox(0,0)[r]{\strut{} 100}}      \put(1254,484){\makebox(0,0){\strut{} 0}}      \put(1932,484){\makebox(0,0){\strut{} 5}}      \put(2610,484){\makebox(0,0){\strut{} 10}}      \put(3287,484){\makebox(0,0){\strut{} 15}}      \put(3965,484){\makebox(0,0){\strut{} 20}}      \put(4643,484){\makebox(0,0){\strut{} 25}}      \put(2948,154){\makebox(0,0){\strut{}$t$}}      \put(2948,3197){\makebox(0,0){\strut{}$\|u-u_{\text{ref}}\|_{L^2}$}}    }    \gplgaddtomacro\gplfronttext{      \csname LTb\endcsname      \put(3888,1683){\makebox(0,0)[r]{\strut{}$2^{-6}$}}      \csname LTb\endcsname      \put(3888,1463){\makebox(0,0)[r]{\strut{}$2^{-7}$}}      \csname LTb\endcsname      \put(3888,1243){\makebox(0,0)[r]{\strut{}$2^{-8}$}}      \csname LTb\endcsname      \put(3888,1023){\makebox(0,0)[r]{\strut{}$2^{-9}$}}      \csname LTb\endcsname      \put(3888,803){\makebox(0,0)[r]{\strut{}$2^{-10}$}}    }    \gplbacktext
    \put(0,0){\includegraphics{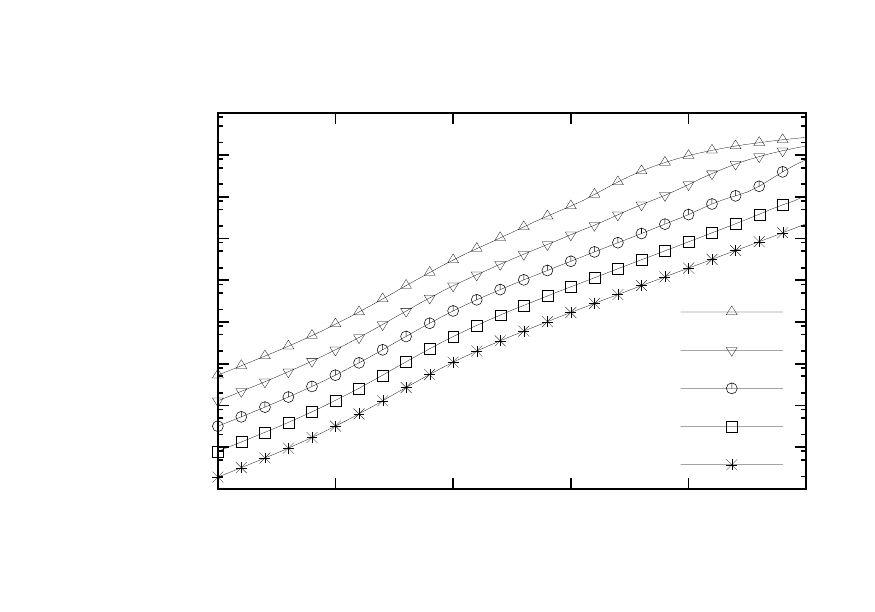}}    \gplfronttext
  \end{picture}\endgroup
  &  \makeatletter{}\begingroup
  \makeatletter
  \providecommand\color[2][]{    \GenericError{(gnuplot) \space\space\space\@spaces}{      Package color not loaded in conjunction with
      terminal option `colourtext'    }{See the gnuplot documentation for explanation.    }{Either use 'blacktext' in gnuplot or load the package
      color.sty in LaTeX.}    \renewcommand\color[2][]{}  }  \providecommand\includegraphics[2][]{    \GenericError{(gnuplot) \space\space\space\@spaces}{      Package graphicx or graphics not loaded    }{See the gnuplot documentation for explanation.    }{The gnuplot epslatex terminal needs graphicx.sty or graphics.sty.}    \renewcommand\includegraphics[2][]{}  }  \providecommand\rotatebox[2]{#2}  \@ifundefined{ifGPcolor}{    \newif\ifGPcolor
    \GPcolorfalse
  }{}  \@ifundefined{ifGPblacktext}{    \newif\ifGPblacktext
    \GPblacktexttrue
  }{}    \let\gplgaddtomacro\g@addto@macro
    \gdef\gplbacktext{}  \gdef\gplfronttext{}  \makeatother
  \ifGPblacktext
        \def\colorrgb#1{}    \def\colorgray#1{}  \else
        \ifGPcolor
      \def\colorrgb#1{\color[rgb]{#1}}      \def\colorgray#1{\color[gray]{#1}}      \expandafter\def\csname LTw\endcsname{\color{white}}      \expandafter\def\csname LTb\endcsname{\color{black}}      \expandafter\def\csname LTa\endcsname{\color{black}}      \expandafter\def\csname LT0\endcsname{\color[rgb]{1,0,0}}      \expandafter\def\csname LT1\endcsname{\color[rgb]{0,1,0}}      \expandafter\def\csname LT2\endcsname{\color[rgb]{0,0,1}}      \expandafter\def\csname LT3\endcsname{\color[rgb]{1,0,1}}      \expandafter\def\csname LT4\endcsname{\color[rgb]{0,1,1}}      \expandafter\def\csname LT5\endcsname{\color[rgb]{1,1,0}}      \expandafter\def\csname LT6\endcsname{\color[rgb]{0,0,0}}      \expandafter\def\csname LT7\endcsname{\color[rgb]{1,0.3,0}}      \expandafter\def\csname LT8\endcsname{\color[rgb]{0.5,0.5,0.5}}    \else
            \def\colorrgb#1{\color{black}}      \def\colorgray#1{\color[gray]{#1}}      \expandafter\def\csname LTw\endcsname{\color{white}}      \expandafter\def\csname LTb\endcsname{\color{black}}      \expandafter\def\csname LTa\endcsname{\color{black}}      \expandafter\def\csname LT0\endcsname{\color{black}}      \expandafter\def\csname LT1\endcsname{\color{black}}      \expandafter\def\csname LT2\endcsname{\color{black}}      \expandafter\def\csname LT3\endcsname{\color{black}}      \expandafter\def\csname LT4\endcsname{\color{black}}      \expandafter\def\csname LT5\endcsname{\color{black}}      \expandafter\def\csname LT6\endcsname{\color{black}}      \expandafter\def\csname LT7\endcsname{\color{black}}      \expandafter\def\csname LT8\endcsname{\color{black}}    \fi
  \fi
  \setlength{\unitlength}{0.0500bp}  \begin{picture}(5040.00,3528.00)    \gplgaddtomacro\gplfronttext{      \csname LTb\endcsname      \put(1122,704){\makebox(0,0)[r]{\strut{} 1e-09}}      \put(1122,1013){\makebox(0,0)[r]{\strut{} 1e-08}}      \put(1122,1322){\makebox(0,0)[r]{\strut{} 1e-07}}      \put(1122,1631){\makebox(0,0)[r]{\strut{} 1e-06}}      \put(1122,1940){\makebox(0,0)[r]{\strut{} 1e-05}}      \put(1122,2249){\makebox(0,0)[r]{\strut{} 0.0001}}      \put(1122,2558){\makebox(0,0)[r]{\strut{} 0.001}}      \put(1122,2867){\makebox(0,0)[r]{\strut{} 0.01}}      \put(1254,484){\makebox(0,0){\strut{} 0}}      \put(1932,484){\makebox(0,0){\strut{} 5}}      \put(2610,484){\makebox(0,0){\strut{} 10}}      \put(3287,484){\makebox(0,0){\strut{} 15}}      \put(3965,484){\makebox(0,0){\strut{} 20}}      \put(4643,484){\makebox(0,0){\strut{} 25}}      \put(2948,154){\makebox(0,0){\strut{}$t$}}      \put(2948,3197){\makebox(0,0){\strut{}$\|w-w_{\text{ref}}\|_{L^2}$}}    }    \gplgaddtomacro\gplfronttext{    }    \gplbacktext
    \put(0,0){\includegraphics{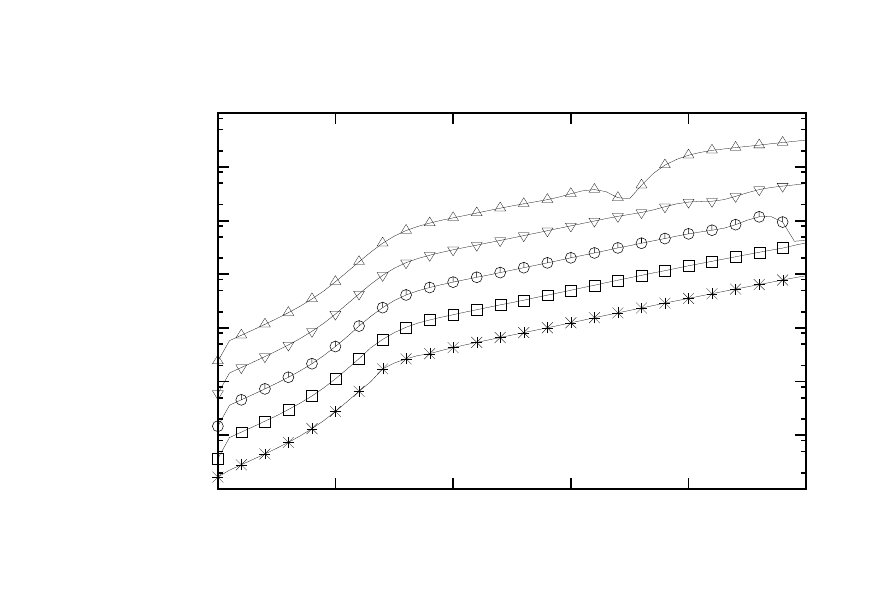}}    \gplfronttext
  \end{picture}\endgroup
 \\
   \makeatletter{}\begingroup
  \makeatletter
  \providecommand\color[2][]{    \GenericError{(gnuplot) \space\space\space\@spaces}{      Package color not loaded in conjunction with
      terminal option `colourtext'    }{See the gnuplot documentation for explanation.    }{Either use 'blacktext' in gnuplot or load the package
      color.sty in LaTeX.}    \renewcommand\color[2][]{}  }  \providecommand\includegraphics[2][]{    \GenericError{(gnuplot) \space\space\space\@spaces}{      Package graphicx or graphics not loaded    }{See the gnuplot documentation for explanation.    }{The gnuplot epslatex terminal needs graphicx.sty or graphics.sty.}    \renewcommand\includegraphics[2][]{}  }  \providecommand\rotatebox[2]{#2}  \@ifundefined{ifGPcolor}{    \newif\ifGPcolor
    \GPcolorfalse
  }{}  \@ifundefined{ifGPblacktext}{    \newif\ifGPblacktext
    \GPblacktexttrue
  }{}    \let\gplgaddtomacro\g@addto@macro
    \gdef\gplbacktext{}  \gdef\gplfronttext{}  \makeatother
  \ifGPblacktext
        \def\colorrgb#1{}    \def\colorgray#1{}  \else
        \ifGPcolor
      \def\colorrgb#1{\color[rgb]{#1}}      \def\colorgray#1{\color[gray]{#1}}      \expandafter\def\csname LTw\endcsname{\color{white}}      \expandafter\def\csname LTb\endcsname{\color{black}}      \expandafter\def\csname LTa\endcsname{\color{black}}      \expandafter\def\csname LT0\endcsname{\color[rgb]{1,0,0}}      \expandafter\def\csname LT1\endcsname{\color[rgb]{0,1,0}}      \expandafter\def\csname LT2\endcsname{\color[rgb]{0,0,1}}      \expandafter\def\csname LT3\endcsname{\color[rgb]{1,0,1}}      \expandafter\def\csname LT4\endcsname{\color[rgb]{0,1,1}}      \expandafter\def\csname LT5\endcsname{\color[rgb]{1,1,0}}      \expandafter\def\csname LT6\endcsname{\color[rgb]{0,0,0}}      \expandafter\def\csname LT7\endcsname{\color[rgb]{1,0.3,0}}      \expandafter\def\csname LT8\endcsname{\color[rgb]{0.5,0.5,0.5}}    \else
            \def\colorrgb#1{\color{black}}      \def\colorgray#1{\color[gray]{#1}}      \expandafter\def\csname LTw\endcsname{\color{white}}      \expandafter\def\csname LTb\endcsname{\color{black}}      \expandafter\def\csname LTa\endcsname{\color{black}}      \expandafter\def\csname LT0\endcsname{\color{black}}      \expandafter\def\csname LT1\endcsname{\color{black}}      \expandafter\def\csname LT2\endcsname{\color{black}}      \expandafter\def\csname LT3\endcsname{\color{black}}      \expandafter\def\csname LT4\endcsname{\color{black}}      \expandafter\def\csname LT5\endcsname{\color{black}}      \expandafter\def\csname LT6\endcsname{\color{black}}      \expandafter\def\csname LT7\endcsname{\color{black}}      \expandafter\def\csname LT8\endcsname{\color{black}}    \fi
  \fi
  \setlength{\unitlength}{0.0500bp}  \begin{picture}(5040.00,3528.00)    \gplgaddtomacro\gplfronttext{      \csname LTb\endcsname      \put(726,704){\makebox(0,0)[r]{\strut{} 0.5}}      \put(726,1137){\makebox(0,0)[r]{\strut{} 1}}      \put(726,1569){\makebox(0,0)[r]{\strut{} 1.5}}      \put(726,2002){\makebox(0,0)[r]{\strut{} 2}}      \put(726,2434){\makebox(0,0)[r]{\strut{} 2.5}}      \put(726,2867){\makebox(0,0)[r]{\strut{} 3}}      \put(858,484){\makebox(0,0){\strut{} 0}}      \put(1615,484){\makebox(0,0){\strut{} 5}}      \put(2372,484){\makebox(0,0){\strut{} 10}}      \put(3129,484){\makebox(0,0){\strut{} 15}}      \put(3886,484){\makebox(0,0){\strut{} 20}}      \put(4643,484){\makebox(0,0){\strut{} 25}}      \put(2750,154){\makebox(0,0){\strut{}$t$}}      \put(2750,3197){\makebox(0,0){\strut{}$\operatorname{Log}(\frac{\|u_{h}-u_{\text{ref}}\|_{L^2}}{\|u_{h/2}-u_{\text{ref}}\|_{L^2}})/\operatorname{Log}(2)$}}    }    \gplgaddtomacro\gplfronttext{      \csname LTb\endcsname      \put(1518,1537){\makebox(0,0)[r]{\strut{}$2^{-7}$}}      \csname LTb\endcsname      \put(1518,1317){\makebox(0,0)[r]{\strut{}$2^{-8}$}}      \csname LTb\endcsname      \put(1518,1097){\makebox(0,0)[r]{\strut{}$2^{-9}$}}      \csname LTb\endcsname      \put(1518,877){\makebox(0,0)[r]{\strut{}$2^{-10}$}}    }    \gplbacktext
    \put(0,0){\includegraphics{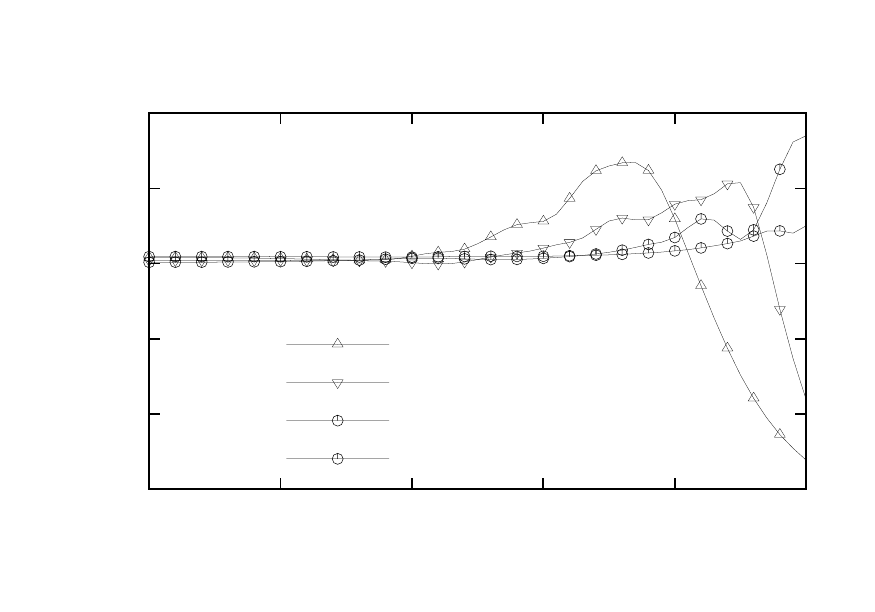}}    \gplfronttext
  \end{picture}\endgroup
   &  \makeatletter{}\begingroup
  \makeatletter
  \providecommand\color[2][]{    \GenericError{(gnuplot) \space\space\space\@spaces}{      Package color not loaded in conjunction with
      terminal option `colourtext'    }{See the gnuplot documentation for explanation.    }{Either use 'blacktext' in gnuplot or load the package
      color.sty in LaTeX.}    \renewcommand\color[2][]{}  }  \providecommand\includegraphics[2][]{    \GenericError{(gnuplot) \space\space\space\@spaces}{      Package graphicx or graphics not loaded    }{See the gnuplot documentation for explanation.    }{The gnuplot epslatex terminal needs graphicx.sty or graphics.sty.}    \renewcommand\includegraphics[2][]{}  }  \providecommand\rotatebox[2]{#2}  \@ifundefined{ifGPcolor}{    \newif\ifGPcolor
    \GPcolorfalse
  }{}  \@ifundefined{ifGPblacktext}{    \newif\ifGPblacktext
    \GPblacktexttrue
  }{}    \let\gplgaddtomacro\g@addto@macro
    \gdef\gplbacktext{}  \gdef\gplfronttext{}  \makeatother
  \ifGPblacktext
        \def\colorrgb#1{}    \def\colorgray#1{}  \else
        \ifGPcolor
      \def\colorrgb#1{\color[rgb]{#1}}      \def\colorgray#1{\color[gray]{#1}}      \expandafter\def\csname LTw\endcsname{\color{white}}      \expandafter\def\csname LTb\endcsname{\color{black}}      \expandafter\def\csname LTa\endcsname{\color{black}}      \expandafter\def\csname LT0\endcsname{\color[rgb]{1,0,0}}      \expandafter\def\csname LT1\endcsname{\color[rgb]{0,1,0}}      \expandafter\def\csname LT2\endcsname{\color[rgb]{0,0,1}}      \expandafter\def\csname LT3\endcsname{\color[rgb]{1,0,1}}      \expandafter\def\csname LT4\endcsname{\color[rgb]{0,1,1}}      \expandafter\def\csname LT5\endcsname{\color[rgb]{1,1,0}}      \expandafter\def\csname LT6\endcsname{\color[rgb]{0,0,0}}      \expandafter\def\csname LT7\endcsname{\color[rgb]{1,0.3,0}}      \expandafter\def\csname LT8\endcsname{\color[rgb]{0.5,0.5,0.5}}    \else
            \def\colorrgb#1{\color{black}}      \def\colorgray#1{\color[gray]{#1}}      \expandafter\def\csname LTw\endcsname{\color{white}}      \expandafter\def\csname LTb\endcsname{\color{black}}      \expandafter\def\csname LTa\endcsname{\color{black}}      \expandafter\def\csname LT0\endcsname{\color{black}}      \expandafter\def\csname LT1\endcsname{\color{black}}      \expandafter\def\csname LT2\endcsname{\color{black}}      \expandafter\def\csname LT3\endcsname{\color{black}}      \expandafter\def\csname LT4\endcsname{\color{black}}      \expandafter\def\csname LT5\endcsname{\color{black}}      \expandafter\def\csname LT6\endcsname{\color{black}}      \expandafter\def\csname LT7\endcsname{\color{black}}      \expandafter\def\csname LT8\endcsname{\color{black}}    \fi
  \fi
  \setlength{\unitlength}{0.0500bp}  \begin{picture}(5040.00,3528.00)    \gplgaddtomacro\gplfronttext{      \csname LTb\endcsname      \put(726,704){\makebox(0,0)[r]{\strut{} 0}}      \put(726,1013){\makebox(0,0)[r]{\strut{} 0.5}}      \put(726,1322){\makebox(0,0)[r]{\strut{} 1}}      \put(726,1631){\makebox(0,0)[r]{\strut{} 1.5}}      \put(726,1940){\makebox(0,0)[r]{\strut{} 2}}      \put(726,2249){\makebox(0,0)[r]{\strut{} 2.5}}      \put(726,2558){\makebox(0,0)[r]{\strut{} 3}}      \put(726,2867){\makebox(0,0)[r]{\strut{} 3.5}}      \put(858,484){\makebox(0,0){\strut{} 0}}      \put(1615,484){\makebox(0,0){\strut{} 5}}      \put(2372,484){\makebox(0,0){\strut{} 10}}      \put(3129,484){\makebox(0,0){\strut{} 15}}      \put(3886,484){\makebox(0,0){\strut{} 20}}      \put(4643,484){\makebox(0,0){\strut{} 25}}      \put(2750,154){\makebox(0,0){\strut{}$t$}}      \put(2750,3197){\makebox(0,0){\strut{}$\operatorname{Log}(\frac{\|w_{h}-w_{\text{ref}}\|_{L^2}}{\|u_{h/2}-u_{\text{ref}}\|_{L^2}})/\operatorname{Log}(2)$}}    }    \gplgaddtomacro\gplfronttext{    }    \gplbacktext
    \put(0,0){\includegraphics{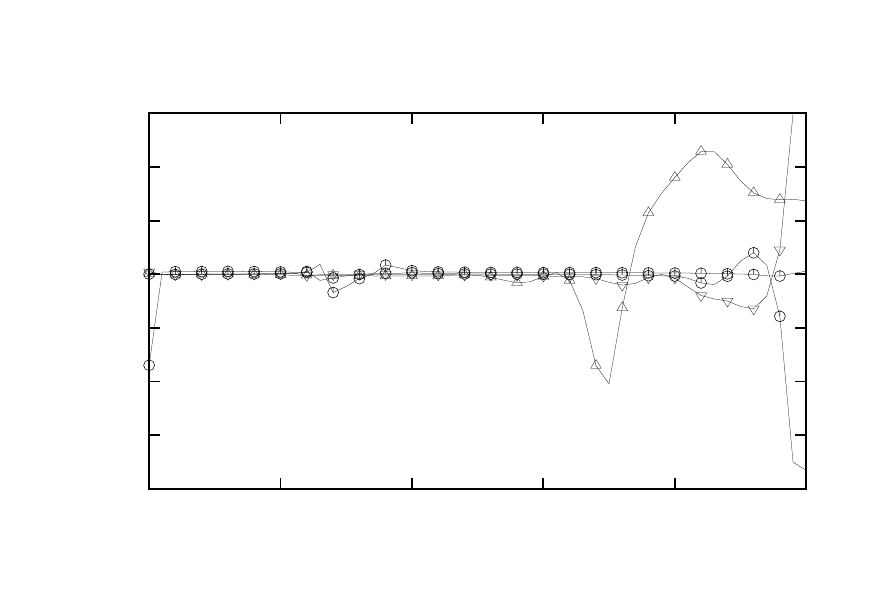}}    \gplfronttext
  \end{picture}\endgroup
 
\end{tabular}
\captionof{figure}{Upper row: Plot of the evolution of the $L^2$-error for a configuration shown in fig. \ref{fig:1s} and its $L^1$ norm shown in fig. \ref{fig:m-1s} in the sense of a reference solution. 
                   Lower row: Plot of the evolution of the order of error reduction. The reference solution was obtained on a mesh with spatial mesh size $h=2^{-13}$ and temporal mesh size $k=0.01$.}
\label{fig:L2-sisp}
\end{minipage}\\
\begin{minipage}[t]{40em}
\begin{tabular}{p{18em}p{22em}}
      \makeatletter{}\begingroup
  \makeatletter
  \providecommand\color[2][]{    \GenericError{(gnuplot) \space\space\space\@spaces}{      Package color not loaded in conjunction with
      terminal option `colourtext'    }{See the gnuplot documentation for explanation.    }{Either use 'blacktext' in gnuplot or load the package
      color.sty in LaTeX.}    \renewcommand\color[2][]{}  }  \providecommand\includegraphics[2][]{    \GenericError{(gnuplot) \space\space\space\@spaces}{      Package graphicx or graphics not loaded    }{See the gnuplot documentation for explanation.    }{The gnuplot epslatex terminal needs graphicx.sty or graphics.sty.}    \renewcommand\includegraphics[2][]{}  }  \providecommand\rotatebox[2]{#2}  \@ifundefined{ifGPcolor}{    \newif\ifGPcolor
    \GPcolorfalse
  }{}  \@ifundefined{ifGPblacktext}{    \newif\ifGPblacktext
    \GPblacktexttrue
  }{}    \let\gplgaddtomacro\g@addto@macro
    \gdef\gplbacktext{}  \gdef\gplfronttext{}  \makeatother
  \ifGPblacktext
        \def\colorrgb#1{}    \def\colorgray#1{}  \else
        \ifGPcolor
      \def\colorrgb#1{\color[rgb]{#1}}      \def\colorgray#1{\color[gray]{#1}}      \expandafter\def\csname LTw\endcsname{\color{white}}      \expandafter\def\csname LTb\endcsname{\color{black}}      \expandafter\def\csname LTa\endcsname{\color{black}}      \expandafter\def\csname LT0\endcsname{\color[rgb]{1,0,0}}      \expandafter\def\csname LT1\endcsname{\color[rgb]{0,1,0}}      \expandafter\def\csname LT2\endcsname{\color[rgb]{0,0,1}}      \expandafter\def\csname LT3\endcsname{\color[rgb]{1,0,1}}      \expandafter\def\csname LT4\endcsname{\color[rgb]{0,1,1}}      \expandafter\def\csname LT5\endcsname{\color[rgb]{1,1,0}}      \expandafter\def\csname LT6\endcsname{\color[rgb]{0,0,0}}      \expandafter\def\csname LT7\endcsname{\color[rgb]{1,0.3,0}}      \expandafter\def\csname LT8\endcsname{\color[rgb]{0.5,0.5,0.5}}    \else
            \def\colorrgb#1{\color{black}}      \def\colorgray#1{\color[gray]{#1}}      \expandafter\def\csname LTw\endcsname{\color{white}}      \expandafter\def\csname LTb\endcsname{\color{black}}      \expandafter\def\csname LTa\endcsname{\color{black}}      \expandafter\def\csname LT0\endcsname{\color{black}}      \expandafter\def\csname LT1\endcsname{\color{black}}      \expandafter\def\csname LT2\endcsname{\color{black}}      \expandafter\def\csname LT3\endcsname{\color{black}}      \expandafter\def\csname LT4\endcsname{\color{black}}      \expandafter\def\csname LT5\endcsname{\color{black}}      \expandafter\def\csname LT6\endcsname{\color{black}}      \expandafter\def\csname LT7\endcsname{\color{black}}      \expandafter\def\csname LT8\endcsname{\color{black}}    \fi
  \fi
  \setlength{\unitlength}{0.0500bp}  \begin{picture}(5040.00,3528.00)    \gplgaddtomacro\gplfronttext{      \csname LTb\endcsname      \put(1122,704){\makebox(0,0)[r]{\strut{} 1e-08}}      \put(1122,944){\makebox(0,0)[r]{\strut{} 1e-07}}      \put(1122,1185){\makebox(0,0)[r]{\strut{} 1e-06}}      \put(1122,1425){\makebox(0,0)[r]{\strut{} 1e-05}}      \put(1122,1665){\makebox(0,0)[r]{\strut{} 0.0001}}      \put(1122,1906){\makebox(0,0)[r]{\strut{} 0.001}}      \put(1122,2146){\makebox(0,0)[r]{\strut{} 0.01}}      \put(1122,2386){\makebox(0,0)[r]{\strut{} 0.1}}      \put(1122,2627){\makebox(0,0)[r]{\strut{} 1}}      \put(1122,2867){\makebox(0,0)[r]{\strut{} 10}}      \put(1254,484){\makebox(0,0){\strut{} 0}}      \put(1932,484){\makebox(0,0){\strut{} 5}}      \put(2610,484){\makebox(0,0){\strut{} 10}}      \put(3287,484){\makebox(0,0){\strut{} 15}}      \put(3965,484){\makebox(0,0){\strut{} 20}}      \put(4643,484){\makebox(0,0){\strut{} 25}}      \put(2948,154){\makebox(0,0){\strut{}$t$}}      \put(2948,3197){\makebox(0,0){\strut{}$\|u-u_{\text{ref}}\|_{L^1}$}}    }    \gplgaddtomacro\gplfronttext{      \csname LTb\endcsname      \put(3756,1727){\makebox(0,0)[r]{\strut{}$2^{-6}$}}      \csname LTb\endcsname      \put(3756,1507){\makebox(0,0)[r]{\strut{}$2^{-7}$}}      \csname LTb\endcsname      \put(3756,1287){\makebox(0,0)[r]{\strut{}$2^{-8}$}}      \csname LTb\endcsname      \put(3756,1067){\makebox(0,0)[r]{\strut{}$2^{-9}$}}      \csname LTb\endcsname      \put(3756,847){\makebox(0,0)[r]{\strut{}$2^{-10}$}}    }    \gplbacktext
    \put(0,0){\includegraphics{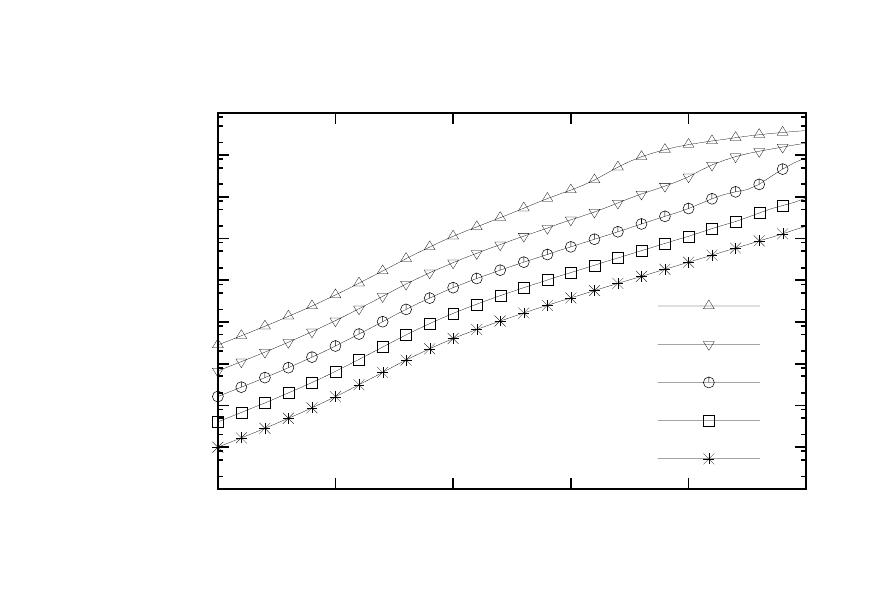}}    \gplfronttext
  \end{picture}\endgroup
  &  \makeatletter{}\begingroup
  \makeatletter
  \providecommand\color[2][]{    \GenericError{(gnuplot) \space\space\space\@spaces}{      Package color not loaded in conjunction with
      terminal option `colourtext'    }{See the gnuplot documentation for explanation.    }{Either use 'blacktext' in gnuplot or load the package
      color.sty in LaTeX.}    \renewcommand\color[2][]{}  }  \providecommand\includegraphics[2][]{    \GenericError{(gnuplot) \space\space\space\@spaces}{      Package graphicx or graphics not loaded    }{See the gnuplot documentation for explanation.    }{The gnuplot epslatex terminal needs graphicx.sty or graphics.sty.}    \renewcommand\includegraphics[2][]{}  }  \providecommand\rotatebox[2]{#2}  \@ifundefined{ifGPcolor}{    \newif\ifGPcolor
    \GPcolorfalse
  }{}  \@ifundefined{ifGPblacktext}{    \newif\ifGPblacktext
    \GPblacktexttrue
  }{}    \let\gplgaddtomacro\g@addto@macro
    \gdef\gplbacktext{}  \gdef\gplfronttext{}  \makeatother
  \ifGPblacktext
        \def\colorrgb#1{}    \def\colorgray#1{}  \else
        \ifGPcolor
      \def\colorrgb#1{\color[rgb]{#1}}      \def\colorgray#1{\color[gray]{#1}}      \expandafter\def\csname LTw\endcsname{\color{white}}      \expandafter\def\csname LTb\endcsname{\color{black}}      \expandafter\def\csname LTa\endcsname{\color{black}}      \expandafter\def\csname LT0\endcsname{\color[rgb]{1,0,0}}      \expandafter\def\csname LT1\endcsname{\color[rgb]{0,1,0}}      \expandafter\def\csname LT2\endcsname{\color[rgb]{0,0,1}}      \expandafter\def\csname LT3\endcsname{\color[rgb]{1,0,1}}      \expandafter\def\csname LT4\endcsname{\color[rgb]{0,1,1}}      \expandafter\def\csname LT5\endcsname{\color[rgb]{1,1,0}}      \expandafter\def\csname LT6\endcsname{\color[rgb]{0,0,0}}      \expandafter\def\csname LT7\endcsname{\color[rgb]{1,0.3,0}}      \expandafter\def\csname LT8\endcsname{\color[rgb]{0.5,0.5,0.5}}    \else
            \def\colorrgb#1{\color{black}}      \def\colorgray#1{\color[gray]{#1}}      \expandafter\def\csname LTw\endcsname{\color{white}}      \expandafter\def\csname LTb\endcsname{\color{black}}      \expandafter\def\csname LTa\endcsname{\color{black}}      \expandafter\def\csname LT0\endcsname{\color{black}}      \expandafter\def\csname LT1\endcsname{\color{black}}      \expandafter\def\csname LT2\endcsname{\color{black}}      \expandafter\def\csname LT3\endcsname{\color{black}}      \expandafter\def\csname LT4\endcsname{\color{black}}      \expandafter\def\csname LT5\endcsname{\color{black}}      \expandafter\def\csname LT6\endcsname{\color{black}}      \expandafter\def\csname LT7\endcsname{\color{black}}      \expandafter\def\csname LT8\endcsname{\color{black}}    \fi
  \fi
  \setlength{\unitlength}{0.0500bp}  \begin{picture}(5040.00,3528.00)    \gplgaddtomacro\gplfronttext{      \csname LTb\endcsname      \put(1122,704){\makebox(0,0)[r]{\strut{} 1e-09}}      \put(1122,1013){\makebox(0,0)[r]{\strut{} 1e-08}}      \put(1122,1322){\makebox(0,0)[r]{\strut{} 1e-07}}      \put(1122,1631){\makebox(0,0)[r]{\strut{} 1e-06}}      \put(1122,1940){\makebox(0,0)[r]{\strut{} 1e-05}}      \put(1122,2249){\makebox(0,0)[r]{\strut{} 0.0001}}      \put(1122,2558){\makebox(0,0)[r]{\strut{} 0.001}}      \put(1122,2867){\makebox(0,0)[r]{\strut{} 0.01}}      \put(1254,484){\makebox(0,0){\strut{} 0}}      \put(1932,484){\makebox(0,0){\strut{} 5}}      \put(2610,484){\makebox(0,0){\strut{} 10}}      \put(3287,484){\makebox(0,0){\strut{} 15}}      \put(3965,484){\makebox(0,0){\strut{} 20}}      \put(4643,484){\makebox(0,0){\strut{} 25}}      \put(2948,154){\makebox(0,0){\strut{}$t$}}      \put(2948,3197){\makebox(0,0){\strut{}$\|w-w_{\text{ref}}\|_{L^1}$}}    }    \gplgaddtomacro\gplfronttext{    }    \gplbacktext
    \put(0,0){\includegraphics{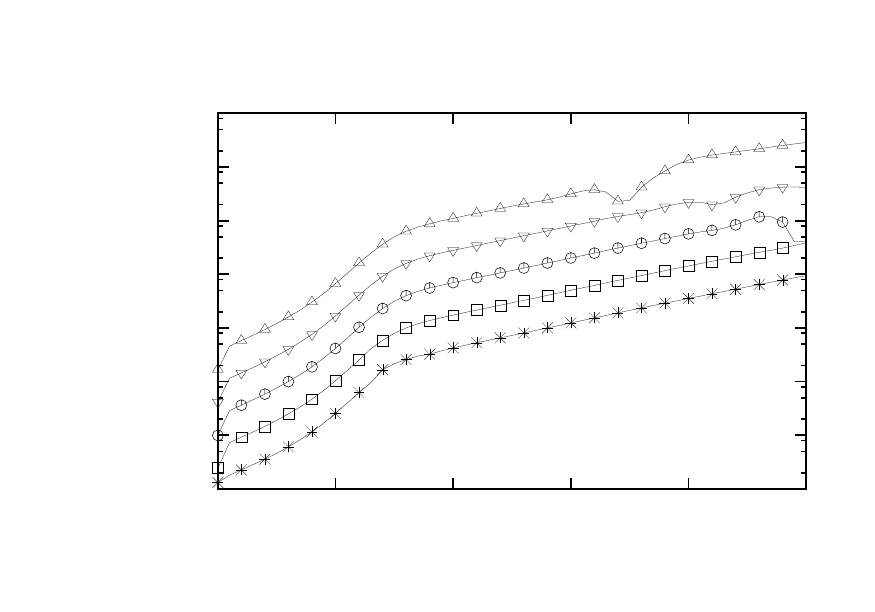}}    \gplfronttext
  \end{picture}\endgroup
 
\end{tabular}
\captionof{figure}{Plot of the evolution of the $L^1$-error for a configuration shown in fig. \ref{fig:1s} and its $L^1$ norm shown in fig. \ref{fig:m-1s} in the sense of a reference solution. 
                   The reference solution was obtained on a mesh with spatial mesh size $h=2^{-13}$ and temporal mesh size $k=0.01$.}
\label{fig:L1-sisp}
\end{minipage}\\

The same observation holds for the same configuration with smaller diffusion coefficient, s.t. growth of more than one spike occurs. The solution is shown in fig. \ref{behaviour-multispike-1}, the error in fig. \ref{fig:L2-multispike}.\\
\begin{minipage}[t]{40em}
\begin{tabular}{p{18em}p{22em}}
      \makeatletter{}\begingroup
  \makeatletter
  \providecommand\color[2][]{    \GenericError{(gnuplot) \space\space\space\@spaces}{      Package color not loaded in conjunction with
      terminal option `colourtext'    }{See the gnuplot documentation for explanation.    }{Either use 'blacktext' in gnuplot or load the package
      color.sty in LaTeX.}    \renewcommand\color[2][]{}  }  \providecommand\includegraphics[2][]{    \GenericError{(gnuplot) \space\space\space\@spaces}{      Package graphicx or graphics not loaded    }{See the gnuplot documentation for explanation.    }{The gnuplot epslatex terminal needs graphicx.sty or graphics.sty.}    \renewcommand\includegraphics[2][]{}  }  \providecommand\rotatebox[2]{#2}  \@ifundefined{ifGPcolor}{    \newif\ifGPcolor
    \GPcolorfalse
  }{}  \@ifundefined{ifGPblacktext}{    \newif\ifGPblacktext
    \GPblacktexttrue
  }{}    \let\gplgaddtomacro\g@addto@macro
    \gdef\gplbacktext{}  \gdef\gplfronttext{}  \makeatother
  \ifGPblacktext
        \def\colorrgb#1{}    \def\colorgray#1{}  \else
        \ifGPcolor
      \def\colorrgb#1{\color[rgb]{#1}}      \def\colorgray#1{\color[gray]{#1}}      \expandafter\def\csname LTw\endcsname{\color{white}}      \expandafter\def\csname LTb\endcsname{\color{black}}      \expandafter\def\csname LTa\endcsname{\color{black}}      \expandafter\def\csname LT0\endcsname{\color[rgb]{1,0,0}}      \expandafter\def\csname LT1\endcsname{\color[rgb]{0,1,0}}      \expandafter\def\csname LT2\endcsname{\color[rgb]{0,0,1}}      \expandafter\def\csname LT3\endcsname{\color[rgb]{1,0,1}}      \expandafter\def\csname LT4\endcsname{\color[rgb]{0,1,1}}      \expandafter\def\csname LT5\endcsname{\color[rgb]{1,1,0}}      \expandafter\def\csname LT6\endcsname{\color[rgb]{0,0,0}}      \expandafter\def\csname LT7\endcsname{\color[rgb]{1,0.3,0}}      \expandafter\def\csname LT8\endcsname{\color[rgb]{0.5,0.5,0.5}}    \else
            \def\colorrgb#1{\color{black}}      \def\colorgray#1{\color[gray]{#1}}      \expandafter\def\csname LTw\endcsname{\color{white}}      \expandafter\def\csname LTb\endcsname{\color{black}}      \expandafter\def\csname LTa\endcsname{\color{black}}      \expandafter\def\csname LT0\endcsname{\color{black}}      \expandafter\def\csname LT1\endcsname{\color{black}}      \expandafter\def\csname LT2\endcsname{\color{black}}      \expandafter\def\csname LT3\endcsname{\color{black}}      \expandafter\def\csname LT4\endcsname{\color{black}}      \expandafter\def\csname LT5\endcsname{\color{black}}      \expandafter\def\csname LT6\endcsname{\color{black}}      \expandafter\def\csname LT7\endcsname{\color{black}}      \expandafter\def\csname LT8\endcsname{\color{black}}    \fi
  \fi
  \setlength{\unitlength}{0.0500bp}  \begin{picture}(5040.00,3528.00)    \gplgaddtomacro\gplfronttext{      \csname LTb\endcsname      \put(1122,704){\makebox(0,0)[r]{\strut{} 1e-08}}      \put(1122,920){\makebox(0,0)[r]{\strut{} 1e-07}}      \put(1122,1137){\makebox(0,0)[r]{\strut{} 1e-06}}      \put(1122,1353){\makebox(0,0)[r]{\strut{} 1e-05}}      \put(1122,1569){\makebox(0,0)[r]{\strut{} 0.0001}}      \put(1122,1786){\makebox(0,0)[r]{\strut{} 0.001}}      \put(1122,2002){\makebox(0,0)[r]{\strut{} 0.01}}      \put(1122,2218){\makebox(0,0)[r]{\strut{} 0.1}}      \put(1122,2434){\makebox(0,0)[r]{\strut{} 1}}      \put(1122,2651){\makebox(0,0)[r]{\strut{} 10}}      \put(1122,2867){\makebox(0,0)[r]{\strut{} 100}}      \put(1254,484){\makebox(0,0){\strut{} 0}}      \put(1932,484){\makebox(0,0){\strut{} 5}}      \put(2610,484){\makebox(0,0){\strut{} 10}}      \put(3287,484){\makebox(0,0){\strut{} 15}}      \put(3965,484){\makebox(0,0){\strut{} 20}}      \put(4643,484){\makebox(0,0){\strut{} 25}}      \put(2948,154){\makebox(0,0){\strut{}$t$}}      \put(2948,3197){\makebox(0,0){\strut{}$\|u_h-u_{\text{ref}}\|_{L^2}$}}    }    \gplgaddtomacro\gplfronttext{      \csname LTb\endcsname      \put(1739,2666){\makebox(0,0)[r]{\strut{}$2^{-10}$}}      \csname LTb\endcsname      \put(1739,2446){\makebox(0,0)[r]{\strut{}$2^{-11}$}}      \csname LTb\endcsname      \put(1739,2226){\makebox(0,0)[r]{\strut{}$2^{-12}$}}      \csname LTb\endcsname      \put(1739,2006){\makebox(0,0)[r]{\strut{}$2^{-13}$}}      \csname LTb\endcsname      \put(1739,1786){\makebox(0,0)[r]{\strut{}$2^{-14}$}}    }    \gplbacktext
    \put(0,0){\includegraphics{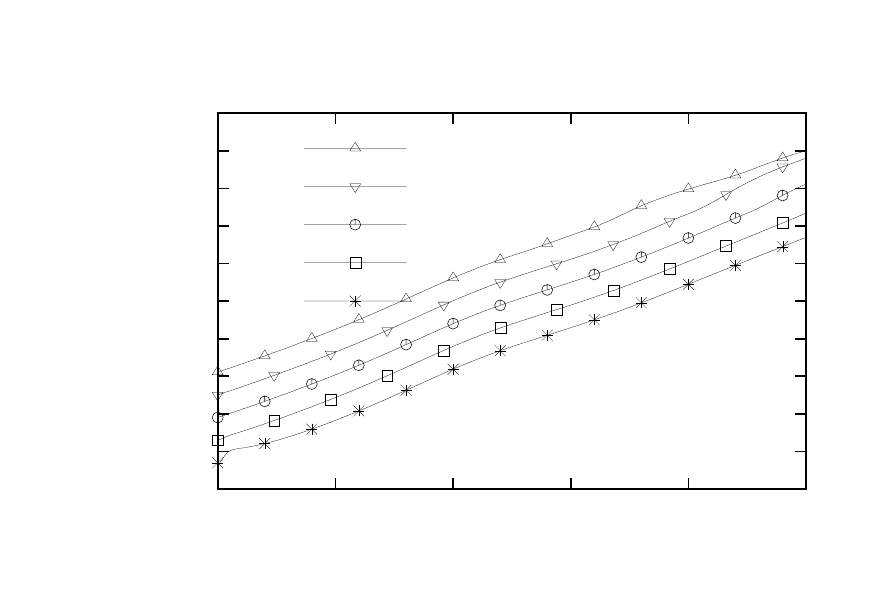}}    \gplfronttext
  \end{picture}\endgroup
 
&
      \makeatletter{}\begingroup
  \makeatletter
  \providecommand\color[2][]{    \GenericError{(gnuplot) \space\space\space\@spaces}{      Package color not loaded in conjunction with
      terminal option `colourtext'    }{See the gnuplot documentation for explanation.    }{Either use 'blacktext' in gnuplot or load the package
      color.sty in LaTeX.}    \renewcommand\color[2][]{}  }  \providecommand\includegraphics[2][]{    \GenericError{(gnuplot) \space\space\space\@spaces}{      Package graphicx or graphics not loaded    }{See the gnuplot documentation for explanation.    }{The gnuplot epslatex terminal needs graphicx.sty or graphics.sty.}    \renewcommand\includegraphics[2][]{}  }  \providecommand\rotatebox[2]{#2}  \@ifundefined{ifGPcolor}{    \newif\ifGPcolor
    \GPcolorfalse
  }{}  \@ifundefined{ifGPblacktext}{    \newif\ifGPblacktext
    \GPblacktexttrue
  }{}    \let\gplgaddtomacro\g@addto@macro
    \gdef\gplbacktext{}  \gdef\gplfronttext{}  \makeatother
  \ifGPblacktext
        \def\colorrgb#1{}    \def\colorgray#1{}  \else
        \ifGPcolor
      \def\colorrgb#1{\color[rgb]{#1}}      \def\colorgray#1{\color[gray]{#1}}      \expandafter\def\csname LTw\endcsname{\color{white}}      \expandafter\def\csname LTb\endcsname{\color{black}}      \expandafter\def\csname LTa\endcsname{\color{black}}      \expandafter\def\csname LT0\endcsname{\color[rgb]{1,0,0}}      \expandafter\def\csname LT1\endcsname{\color[rgb]{0,1,0}}      \expandafter\def\csname LT2\endcsname{\color[rgb]{0,0,1}}      \expandafter\def\csname LT3\endcsname{\color[rgb]{1,0,1}}      \expandafter\def\csname LT4\endcsname{\color[rgb]{0,1,1}}      \expandafter\def\csname LT5\endcsname{\color[rgb]{1,1,0}}      \expandafter\def\csname LT6\endcsname{\color[rgb]{0,0,0}}      \expandafter\def\csname LT7\endcsname{\color[rgb]{1,0.3,0}}      \expandafter\def\csname LT8\endcsname{\color[rgb]{0.5,0.5,0.5}}    \else
            \def\colorrgb#1{\color{black}}      \def\colorgray#1{\color[gray]{#1}}      \expandafter\def\csname LTw\endcsname{\color{white}}      \expandafter\def\csname LTb\endcsname{\color{black}}      \expandafter\def\csname LTa\endcsname{\color{black}}      \expandafter\def\csname LT0\endcsname{\color{black}}      \expandafter\def\csname LT1\endcsname{\color{black}}      \expandafter\def\csname LT2\endcsname{\color{black}}      \expandafter\def\csname LT3\endcsname{\color{black}}      \expandafter\def\csname LT4\endcsname{\color{black}}      \expandafter\def\csname LT5\endcsname{\color{black}}      \expandafter\def\csname LT6\endcsname{\color{black}}      \expandafter\def\csname LT7\endcsname{\color{black}}      \expandafter\def\csname LT8\endcsname{\color{black}}    \fi
  \fi
  \setlength{\unitlength}{0.0500bp}  \begin{picture}(5040.00,3528.00)    \gplgaddtomacro\gplfronttext{      \csname LTb\endcsname      \put(1122,704){\makebox(0,0)[r]{\strut{} 1e-09}}      \put(1122,1013){\makebox(0,0)[r]{\strut{} 1e-08}}      \put(1122,1322){\makebox(0,0)[r]{\strut{} 1e-07}}      \put(1122,1631){\makebox(0,0)[r]{\strut{} 1e-06}}      \put(1122,1940){\makebox(0,0)[r]{\strut{} 1e-05}}      \put(1122,2249){\makebox(0,0)[r]{\strut{} 0.0001}}      \put(1122,2558){\makebox(0,0)[r]{\strut{} 0.001}}      \put(1122,2867){\makebox(0,0)[r]{\strut{} 0.01}}      \put(1254,484){\makebox(0,0){\strut{} 0}}      \put(1932,484){\makebox(0,0){\strut{} 5}}      \put(2610,484){\makebox(0,0){\strut{} 10}}      \put(3287,484){\makebox(0,0){\strut{} 15}}      \put(3965,484){\makebox(0,0){\strut{} 20}}      \put(4643,484){\makebox(0,0){\strut{} 25}}      \put(2948,154){\makebox(0,0){\strut{}$t$}}      \put(2948,3197){\makebox(0,0){\strut{}$\|w-w_{\text{ref}}\|_{L^2}$}}    }    \gplgaddtomacro\gplfronttext{    }    \gplbacktext
    \put(0,0){\includegraphics{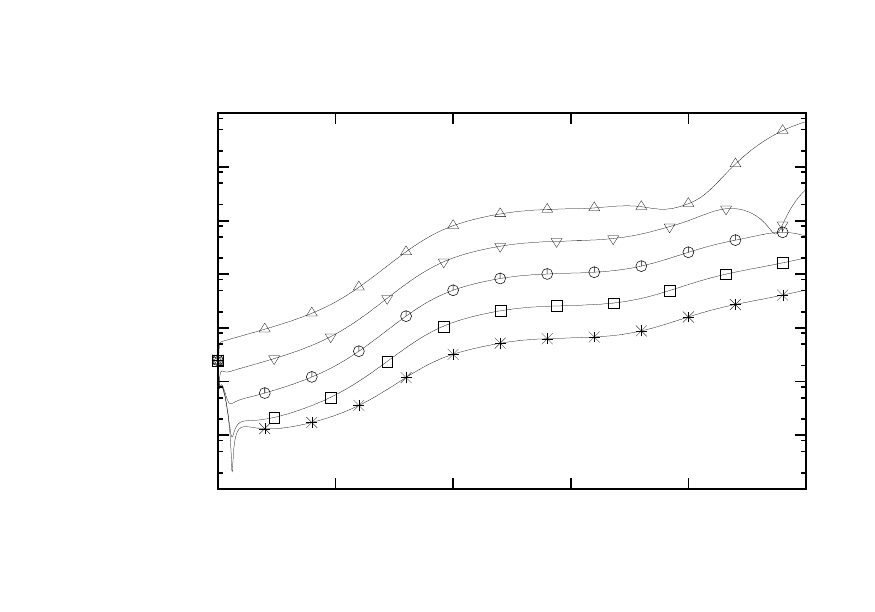}}    \gplfronttext
  \end{picture}\endgroup
 
\end{tabular}
\captionof{figure}{Plot of the evolution of the $L^2$-error for a configuration shown in fig. \ref{behaviour-multispike-1} in the sense of a reference solution. Fig. \ref{behaviour-multispike-1} shows the growth of multiple spikes due to a smaller diffusion coefficient. The reference solution was obtained on a mesh with spatial mesh size $h=2^{-15}$ and temporal mesh size $k=0.00025$.}
\label{fig:L2-multispike}
\end{minipage}\\
\begin{minipage}[t]{40em}
\begin{tabular}{p{18em}p{22em}}
      \makeatletter{}\begingroup
  \makeatletter
  \providecommand\color[2][]{    \GenericError{(gnuplot) \space\space\space\@spaces}{      Package color not loaded in conjunction with
      terminal option `colourtext'    }{See the gnuplot documentation for explanation.    }{Either use 'blacktext' in gnuplot or load the package
      color.sty in LaTeX.}    \renewcommand\color[2][]{}  }  \providecommand\includegraphics[2][]{    \GenericError{(gnuplot) \space\space\space\@spaces}{      Package graphicx or graphics not loaded    }{See the gnuplot documentation for explanation.    }{The gnuplot epslatex terminal needs graphicx.sty or graphics.sty.}    \renewcommand\includegraphics[2][]{}  }  \providecommand\rotatebox[2]{#2}  \@ifundefined{ifGPcolor}{    \newif\ifGPcolor
    \GPcolorfalse
  }{}  \@ifundefined{ifGPblacktext}{    \newif\ifGPblacktext
    \GPblacktexttrue
  }{}    \let\gplgaddtomacro\g@addto@macro
    \gdef\gplbacktext{}  \gdef\gplfronttext{}  \makeatother
  \ifGPblacktext
        \def\colorrgb#1{}    \def\colorgray#1{}  \else
        \ifGPcolor
      \def\colorrgb#1{\color[rgb]{#1}}      \def\colorgray#1{\color[gray]{#1}}      \expandafter\def\csname LTw\endcsname{\color{white}}      \expandafter\def\csname LTb\endcsname{\color{black}}      \expandafter\def\csname LTa\endcsname{\color{black}}      \expandafter\def\csname LT0\endcsname{\color[rgb]{1,0,0}}      \expandafter\def\csname LT1\endcsname{\color[rgb]{0,1,0}}      \expandafter\def\csname LT2\endcsname{\color[rgb]{0,0,1}}      \expandafter\def\csname LT3\endcsname{\color[rgb]{1,0,1}}      \expandafter\def\csname LT4\endcsname{\color[rgb]{0,1,1}}      \expandafter\def\csname LT5\endcsname{\color[rgb]{1,1,0}}      \expandafter\def\csname LT6\endcsname{\color[rgb]{0,0,0}}      \expandafter\def\csname LT7\endcsname{\color[rgb]{1,0.3,0}}      \expandafter\def\csname LT8\endcsname{\color[rgb]{0.5,0.5,0.5}}    \else
            \def\colorrgb#1{\color{black}}      \def\colorgray#1{\color[gray]{#1}}      \expandafter\def\csname LTw\endcsname{\color{white}}      \expandafter\def\csname LTb\endcsname{\color{black}}      \expandafter\def\csname LTa\endcsname{\color{black}}      \expandafter\def\csname LT0\endcsname{\color{black}}      \expandafter\def\csname LT1\endcsname{\color{black}}      \expandafter\def\csname LT2\endcsname{\color{black}}      \expandafter\def\csname LT3\endcsname{\color{black}}      \expandafter\def\csname LT4\endcsname{\color{black}}      \expandafter\def\csname LT5\endcsname{\color{black}}      \expandafter\def\csname LT6\endcsname{\color{black}}      \expandafter\def\csname LT7\endcsname{\color{black}}      \expandafter\def\csname LT8\endcsname{\color{black}}    \fi
  \fi
  \setlength{\unitlength}{0.0500bp}  \begin{picture}(5040.00,3528.00)    \gplgaddtomacro\gplfronttext{      \csname LTb\endcsname      \put(1122,704){\makebox(0,0)[r]{\strut{} 1e-08}}      \put(1122,920){\makebox(0,0)[r]{\strut{} 1e-07}}      \put(1122,1137){\makebox(0,0)[r]{\strut{} 1e-06}}      \put(1122,1353){\makebox(0,0)[r]{\strut{} 1e-05}}      \put(1122,1569){\makebox(0,0)[r]{\strut{} 0.0001}}      \put(1122,1786){\makebox(0,0)[r]{\strut{} 0.001}}      \put(1122,2002){\makebox(0,0)[r]{\strut{} 0.01}}      \put(1122,2218){\makebox(0,0)[r]{\strut{} 0.1}}      \put(1122,2434){\makebox(0,0)[r]{\strut{} 1}}      \put(1122,2651){\makebox(0,0)[r]{\strut{} 10}}      \put(1122,2867){\makebox(0,0)[r]{\strut{} 100}}      \put(1254,484){\makebox(0,0){\strut{} 0}}      \put(1932,484){\makebox(0,0){\strut{} 5}}      \put(2610,484){\makebox(0,0){\strut{} 10}}      \put(3287,484){\makebox(0,0){\strut{} 15}}      \put(3965,484){\makebox(0,0){\strut{} 20}}      \put(4643,484){\makebox(0,0){\strut{} 25}}      \put(2948,154){\makebox(0,0){\strut{}$t$}}      \put(2948,3197){\makebox(0,0){\strut{}$\|u-u_{\text{ref}}\|_{L^2}$}}    }    \gplgaddtomacro\gplfronttext{      \csname LTb\endcsname      \put(1739,2666){\makebox(0,0)[r]{\strut{}$2^{-10}$}}      \csname LTb\endcsname      \put(1739,2446){\makebox(0,0)[r]{\strut{}$2^{-11}$}}      \csname LTb\endcsname      \put(1739,2226){\makebox(0,0)[r]{\strut{}$2^{-12}$}}      \csname LTb\endcsname      \put(1739,2006){\makebox(0,0)[r]{\strut{}$2^{-13}$}}      \csname LTb\endcsname      \put(1739,1786){\makebox(0,0)[r]{\strut{}$2^{-14}$}}    }    \gplbacktext
    \put(0,0){\includegraphics{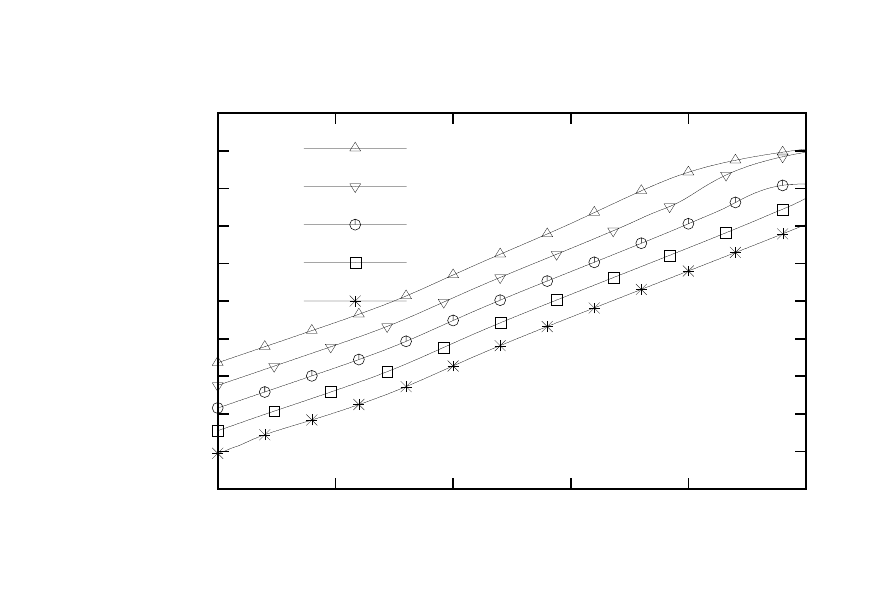}}    \gplfronttext
  \end{picture}\endgroup
 
&
      \makeatletter{}\begingroup
  \makeatletter
  \providecommand\color[2][]{    \GenericError{(gnuplot) \space\space\space\@spaces}{      Package color not loaded in conjunction with
      terminal option `colourtext'    }{See the gnuplot documentation for explanation.    }{Either use 'blacktext' in gnuplot or load the package
      color.sty in LaTeX.}    \renewcommand\color[2][]{}  }  \providecommand\includegraphics[2][]{    \GenericError{(gnuplot) \space\space\space\@spaces}{      Package graphicx or graphics not loaded    }{See the gnuplot documentation for explanation.    }{The gnuplot epslatex terminal needs graphicx.sty or graphics.sty.}    \renewcommand\includegraphics[2][]{}  }  \providecommand\rotatebox[2]{#2}  \@ifundefined{ifGPcolor}{    \newif\ifGPcolor
    \GPcolorfalse
  }{}  \@ifundefined{ifGPblacktext}{    \newif\ifGPblacktext
    \GPblacktexttrue
  }{}    \let\gplgaddtomacro\g@addto@macro
    \gdef\gplbacktext{}  \gdef\gplfronttext{}  \makeatother
  \ifGPblacktext
        \def\colorrgb#1{}    \def\colorgray#1{}  \else
        \ifGPcolor
      \def\colorrgb#1{\color[rgb]{#1}}      \def\colorgray#1{\color[gray]{#1}}      \expandafter\def\csname LTw\endcsname{\color{white}}      \expandafter\def\csname LTb\endcsname{\color{black}}      \expandafter\def\csname LTa\endcsname{\color{black}}      \expandafter\def\csname LT0\endcsname{\color[rgb]{1,0,0}}      \expandafter\def\csname LT1\endcsname{\color[rgb]{0,1,0}}      \expandafter\def\csname LT2\endcsname{\color[rgb]{0,0,1}}      \expandafter\def\csname LT3\endcsname{\color[rgb]{1,0,1}}      \expandafter\def\csname LT4\endcsname{\color[rgb]{0,1,1}}      \expandafter\def\csname LT5\endcsname{\color[rgb]{1,1,0}}      \expandafter\def\csname LT6\endcsname{\color[rgb]{0,0,0}}      \expandafter\def\csname LT7\endcsname{\color[rgb]{1,0.3,0}}      \expandafter\def\csname LT8\endcsname{\color[rgb]{0.5,0.5,0.5}}    \else
            \def\colorrgb#1{\color{black}}      \def\colorgray#1{\color[gray]{#1}}      \expandafter\def\csname LTw\endcsname{\color{white}}      \expandafter\def\csname LTb\endcsname{\color{black}}      \expandafter\def\csname LTa\endcsname{\color{black}}      \expandafter\def\csname LT0\endcsname{\color{black}}      \expandafter\def\csname LT1\endcsname{\color{black}}      \expandafter\def\csname LT2\endcsname{\color{black}}      \expandafter\def\csname LT3\endcsname{\color{black}}      \expandafter\def\csname LT4\endcsname{\color{black}}      \expandafter\def\csname LT5\endcsname{\color{black}}      \expandafter\def\csname LT6\endcsname{\color{black}}      \expandafter\def\csname LT7\endcsname{\color{black}}      \expandafter\def\csname LT8\endcsname{\color{black}}    \fi
  \fi
  \setlength{\unitlength}{0.0500bp}  \begin{picture}(5040.00,3528.00)    \gplgaddtomacro\gplfronttext{      \csname LTb\endcsname      \put(1122,704){\makebox(0,0)[r]{\strut{} 1e-09}}      \put(1122,1013){\makebox(0,0)[r]{\strut{} 1e-08}}      \put(1122,1322){\makebox(0,0)[r]{\strut{} 1e-07}}      \put(1122,1631){\makebox(0,0)[r]{\strut{} 1e-06}}      \put(1122,1940){\makebox(0,0)[r]{\strut{} 1e-05}}      \put(1122,2249){\makebox(0,0)[r]{\strut{} 0.0001}}      \put(1122,2558){\makebox(0,0)[r]{\strut{} 0.001}}      \put(1122,2867){\makebox(0,0)[r]{\strut{} 0.01}}      \put(1254,484){\makebox(0,0){\strut{} 0}}      \put(1932,484){\makebox(0,0){\strut{} 5}}      \put(2610,484){\makebox(0,0){\strut{} 10}}      \put(3287,484){\makebox(0,0){\strut{} 15}}      \put(3965,484){\makebox(0,0){\strut{} 20}}      \put(4643,484){\makebox(0,0){\strut{} 25}}      \put(2948,154){\makebox(0,0){\strut{}$t$}}      \put(2948,3197){\makebox(0,0){\strut{}$\|w-w_{\text{ref}}\|_{L^2}$}}    }    \gplgaddtomacro\gplfronttext{    }    \gplbacktext
    \put(0,0){\includegraphics{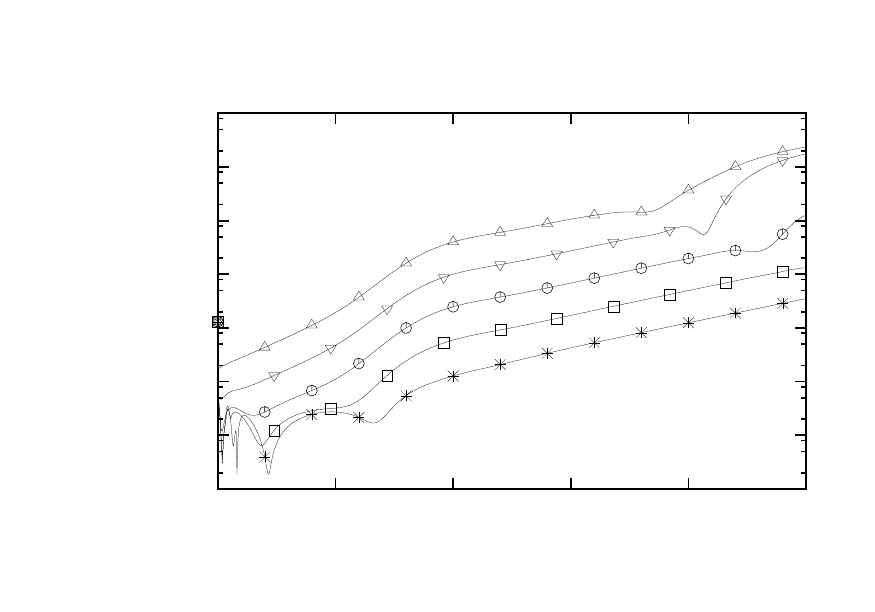}}    \gplfronttext
  \end{picture}\endgroup
 
\end{tabular}
\captionof{figure}{Plot of the evolution of the $L^2$-error for a configuration shown in fig. \ref{fig:cosxx} in the sense of a reference solution. Fig. \ref{fig:cosxx} shows the growth of multiple spikes due multiple maxima of the initial conditions of shape $u_0=\overline{u}+\text{cos}(2 \pi x^2)$. The reference solution was obtained on a mesh with spatial mesh size $h=2^{-16}$ and temporal mesh size $k=0.00025$.}
\label{fig:L2-cosxx}
\end{minipage}\\

\newpage
\begin{table}[ht]
\begin{center}
\begin{tabular}[H]{||l|c|c||}
  \hline \hline
  $s$ & $x_{t=0,\max}$ & $x_{t=25,\max}$ \\
  \hline \hline
  $0.2$  &$0.25$  &$0.2726$ \\
  $0.4$  &$0.417$ &$0.43237$\\
  $0.5$  &$0.5$   &$0.5$ \\
  $0.7$  &$0.66$  &$0.645$ \\
  $0.85$ &$0.792$ &$0.77$ \\
\hline \hline
\end{tabular}
\end{center}
\caption{Position $x_{max}$ of the arising spike ($t=25$) for initial conditions \eqref{IC:sh} with $\epsilon=0.1,\epsilon_1=0.05$, $D_w=6$ and maximum at $x_{t=0,\max}$. The shape of solutions are as in Figure \ref{fig:1s}, differing qualitatively only in the position of spike/sink. We observe that a spike grows close to the position of the maximum of the initial conditions.}
\label{tab:s_pos}
\end{table}

\begin{table}[ht]
\begin{center}
\begin{tabular}[H]{||l|c||}
  \hline \hline
  $D_1$ & spikes \\
  \hline \hline
  $D_{w,1} = 5.8541$ & 1 \\
  $\frac{1}{4}D_{w,1}$  & 2\\
  $\frac{1}{9} D_{w,1}$  & 3 \\
  $\frac{1}{16} D_{w,1}$ & 3 \\
  $\frac{1}{25} D_{w,1}$ & 4 \\
  $\frac{1}{36} D_{w,1}$ & 4 \\
\hline \hline
\end{tabular}
\end{center}
\caption{Number of spikes arising for different diffusion coefficients $D_1$, initial conditions \eqref{IC:sh} with $s=0.4, \epsilon_1=0.05, \epsilon=0.1$.}
\label{tab:D_Spi}
\end{table}

\begin{thebibliography}{99}

\bibitem{Ball}
Ball J.M. {Remarks on blow-up and nonexistence theorems for nonlinear evolution equations}. {\it Quart. J. Math. Oxford}. 1977; {\bf 28}.

\bibitem{deal.ii}
Bangerth W., Hartmann R., Kanschat G.
{deal.II -- a General Purpose Object Oriented Finite Element Library}. {\it ACM Trans. Math. Softw.}. 2007; {\bf 33}(4): 24/1--24/27.

\bibitem{ELS2000}
Estep D., Larson M., Williams R. {Estimating the Error of Numerical Solutions of Systems of Reaction-Diffusion Equations}. {\it Memoirs of the American Mathematical Society}. 2000; {\bf 146}, 696. 

\bibitem{G-M}
Gierer A., Meinhardt H. {A theory of biological pattern formation}. {\it Kybernetik}. 1972; {\bf 12}: 30--39.

\bibitem{H2011}
Haerting S. {Analysis and numerical simulation of the dynamics of pattern formation in a system of degenerated reaction-.diffusion equations}. {\it diploma thesis}. 2011; Fakult\"at f\"ur Mathematik und Informatik, Universit\"at Heidelberg.
 
\bibitem{Henry}
Henry D. {\it Geometric theory of semilinear parabolic equations}. {Springer-Verlag}: New York; 1981.

\bibitem{Keener}
Keener J. {Activators and Inhibitors in Pattern Formation}. {\it Studies in Applied Mathematics}. 1978; {\bf 59}: 1--23.

\bibitem{Baker}
Klika V., Baker R., Headon D., Gaffney E. {The Influence of Receptor-Mediated Interactions on Reaction-Diffusion Mechanisms of Cellular Self-organisation}. {\it Bulletin of Mathematical Biology}. 2012; {\bf 74}(4). 

\bibitem{Epstein}
Lengyel I., Epstein I.R. {A chemical approach to designing Turing patterns in reaction-diffusion systems}. {\it Proc. Natl. Acad. Sci. USA}. 1992; {\bf 89}: 3977--3979.

\bibitem{MC12r}
Marciniak-Czochra A. {\it Reaction-diffusion models of pattern formation in developmental biology}. In 'Mathematics and Life Sciences'  A. Antoniouk, E.V.N. Melnik.  De Gruyter: Germany; 2012: 189--212. 

\bibitem{MCK06} 
Marciniak-Czochra A., Kimmel M. {Dynamics of growth and signaling along linear and surface structures in very early tumors}. {\it Comput. Math. Methods Med.} 2006; {\bf 7}: 189--213.
 
\bibitem{MCK07}
Marciniak-Czochra A., Kimmel M. {Modelling of early lung cancer progression: influence of growth factor production and cooperation between partially transformed cells}. {\it Math. Models Methods Appl. Sci.} 2007; {\bf 17}: 1693--1719. 

\bibitem{MCK08}
Marciniak-Czochra A., Kimmel M. {Reaction-diffusion model of early carcinogenesis: the effects of influx of mutated cells}. {\it Math. Model. Nat. Phenom.} 2008; {\bf 3}: 90--114.

\bibitem{MCKS12} 
Marciniak-Czochra A., Karch G., Suzuki K. {Unstable patterns in reaction-diffusion model of early carcinogenesis}. {\it J.~Math. Pures et Appliquées}, DOI: 10.1016/j.matpur.2012.09.011.

\bibitem{MCKS13}
Marciniak-Czochra A., Karch G., Suzuki K. {Unstable patterns in autocatalytic reaction-diffusion-ODE systems}. {\it Preprint available at} http://arxiv.org/abs/1301.2002.

\bibitem{Murray} 
Murray J.D.   {\it Mathematical biology. II. Spatial models and biomedical applications. Third edition.} Interdisciplinary Applied Mathematics {\bf 18}. Springer-Verlag, New York; 2012.

\bibitem{Nishiura82}
Nishiura Y. {Global structure of bifurcating solutions of some reaction-diffusion systems}. {\it SIAM J. Math. Anal.} 1982; {\bf 13}(4).

\bibitem{Pierre}
Pierre M. {Global existence in reaction-diffusion systems with control of mass: a survey}, {\it Milan J. Math.} 2010; {\bf 78}: 417--455.

 \bibitem{R84}
Rothe F. {\it Global solutions of reaction-diffusion systems}. Lecture Notes in Mathematics, {\bf 1072}. Springer-Verlag: Berlin; 1984.
 
 \bibitem{Smoller}
Smoller J.  {\it Shock waves and reaction-diffusion equations}. Second edition. Grundlehren der Mathematischen Wissenschaften {\bf 258}.
Springer-Verlag: New York; 1984.
 
\bibitem{Turing}
Turing A.M. {The chemical basis of morphogenesis}. {\it Phil. Trans. Roy. Soc. B}. 1952; {\bf 237}: 37--72.

\bibitem{Yanagida}
Mizoguchi, Noriko and Ninomiya, Hirokazu and Yanagida, {Diffusion-Induced Blowup in a Nonlinear Parabolic System}. {\it Journal of Dynamics and Differential Equations.} 1998; {\bf 10}(4).

\bibitem{Epstein2}
Vanag V.K., Yang L., Dolnik M., Zhabotinsky A.M., Epstein I.R. {Oscillatory cluster patterns in a homogeneous chemical system with global feedback}. {Nature.} 2000; {\bf 406}(6794). 

\end{thebibliography}
\end{document}